\numberwithin{equation}{section}
\theoremstyle{plain}
\newtheorem{theorem}{Theorem}[section]
\newtheorem{lemma}[theorem]{Lemma}
\newtheorem*{lemma*}{Lemma}
\newtheorem{proposition}[theorem]{Proposition}
\newtheorem*{proposition*}{Proposition}
\newtheorem{corollary}[theorem]{Corollary}
\theoremstyle{definition}
\newtheorem*{claim*}{Claim}
\newtheorem*{remarks}{Remarks}
\theoremstyle{remark}
\newtheorem*{remark}{Remark}
\newtheorem*{Acknowledgements}{Acknowledgements}
\def\supp{\mathrm{supp\,}}
\def\d{\mathrm{d}}
\def\ae{\mathrm{a.e.}}
\def\dim{\mathrm{dim}}
\def\Re{\mathfrak{R}}
\def\C{\mathcal{C}}
\def\1{\mathbbm 1}
\def\C{\mathcal{C}}
\def\1{\mathbbm 1}
\newcommand{\R}{{\mathbb R}}
\newcommand{\Z}{{\mathbb Z}}
\newcommand{\N}{{\mathbb N}}
\newcommand{\Co}{{\mathbb C}}
\newcommand{\F}{\mathcal{F}}
\newcommand{\J}{\mathcal{J}}
\newcommand{\I}{\mathcal{I}}
\let\Re\relax
\DeclareMathOperator{\Re}{Re}
\author[Bez]{Neal Bez}
\address[Neal Bez]{Department of Mathematics, Graduate School of Science and Engineering,
Saitama University, Saitama 338-8570, Japan}
\email{nealbez@mail.saitama-u.ac.jp}
\author[Kinoshita]{Shinya Kinoshita}
\address[Shinya Kinoshita]{Department of Mathematics, Tokyo Institute of Technology, Meguro-ku, Tokyo, 152-8551, Japan}
\email{kinoshita@math.titech.ac.jp}
\author[Shiraki]{Shobu Shiraki}
\address[Shobu Shiraki]{Departamento de Matem\'atica, Instituto Superior T\'ecnico, Av. Rovisco Pais, Lisboa, 1049-001, Portugal}
\email{shobushiraki@tecnico.ulisboa.pt}
\thanks{
This work was supported by JSPS Kakenhi grant numbers 19H00644, 19H01796, 22H00098 and 23H01080 (Bez), 
21J00514, 22KJ0446 (Kinoshita), and 19H01796, 22H00098 (Shiraki). The third author is also supported by Centro de Análise Matemática, Geometria e Sistemas Dinâmicos (CAMGSD)
}
\begin{document}
\date{\today}
\title[
]
{
Boundary Strichartz estimates and pointwise convergence for orthonormal systems
}

\keywords{}
\subjclass[2010]{}
\begin{abstract}
We consider maximal estimates associated with fermionic systems. First we establish maximal estimates with respect to the spatial variable. These estimates are certain boundary cases of the many-body Strichartz estimates pioneered by Frank, Lewin, Lieb and Seiringer. We also prove new maximal-in-time estimates, thereby significantly extending work of Lee, Nakamura and the first author on Carleson’s pointwise convergence problem for fermionic systems. 
\end{abstract}

\maketitle

\section{Introduction}

\subsection{Background and motivation}

Strichartz estimates for orthonormal systems of initial data take the form
\begin{equation} \label{e:ONSqr}
\bigg\| \sum_j \lambda_j |Uf_j|^2 \bigg\|_{L^{\frac{q}{2}}_tL^\frac{r}{2}_x(\mathbb{R} \times \mathbb{R}^d)} \lesssim \| \lambda \|_{\ell^\beta}.
\end{equation}
As usual, the notation $\lesssim$ will be used to absorb constants which may depend on the dimension $d$ and exponents such as $q,r$ and $\beta$. Here we are primarily interested in the Schr\"odinger equation and $Uf(t,x) = e^{it\Delta}f(x)$. The initial data $(f_j)_j$ is a family of orthonormal functions in the homogeneous Sobolev space $\dot{H}^{s}(\mathbb{R}^d)$ with (necessarily) $s = \frac{d}{2} - \frac{d}{r} - \frac{2}{q}$, and $\lambda = (\lambda_j)_j$ is a sequence of scalars. The study of \eqref{e:ONSqr} was pioneered by Frank--Lewin--Lieb--Seiringer \cite{FLLS} and the estimates have been used to develop a rigorous understanding of the dynamics of a system of infinitely many fermions. For such applications, we refer the reader to, for example, papers by Lewin--Sabin \cite{LewinSabin1, LewinSabin2} and Frank--Sabin \cite{FS_AJM}, as well as very closely related work of Chen--Hong--Pavlovi\'c \cite{CHP1, CHP2}.

For fermionic particles, the orthonormality assumption is natural. This is tied together with the Pauli exclusion principle which forbids two fermions occupying the same quantum state. The dynamics of a system of $N$ fermions can be modelled by a system of coupled equations of Hartree type
\[
i\partial_t u_k + \Delta u_k = (w * \rho)u_k, \quad u_k(0) = f_k
\]
where $k = 1,\ldots,N$, $\rho(t,x) = \sum_{j=1}^N |u_j(t,x)|^2$ is the total density of particles, and $w$ is the interaction potential. To consider the case of infinitely many particles, it is convenient to work with density matrices and one is led to the equation (often called the reduced Hartree--Fock equation)
\begin{equation} \label{e:vonNeumann}
i\partial_t \gamma + [\Delta,\gamma] = [w * \rho_\gamma,\gamma], \quad \gamma(0) = \gamma_0,
\end{equation}
where $\rho_\gamma$ is the density function of the operator $\gamma$. Roughly speaking, Strichartz estimates of the form \eqref{e:ONSqr} (and their close cousins) have played a key role in the recent breakthroughs in giving a rigorous meaning to solutions of \eqref{e:vonNeumann} when the initial data $\gamma_0$ is not of trace class (\cite{CHP1, CHP2, FS_AJM, LewinSabin1, LewinSabin2}). Further recent developments in this direction include the study of ground states \cite{FGL, GLN} and a version of Carleson's pointwise convergence problem \cite{BLN_Selecta}. 

The pointwise convergence problem in particular provided an especially important source of motivation for the present paper and our focus here is on \emph{maximal estimates}, either with respect to the spatial variable or the temporal variable. Maximal-in-space estimates correspond to Strichartz estimates \eqref{e:ONSqr} with $r = \infty$ and this delicate case has, to a large extent, been left open. Maximal-in-time estimates correspond to variants of \eqref{e:ONSqr} with a $L^{\frac{q}{2}}_x L^\infty_t$ mixed-norm on the left-hand side (possibly in localised form) and yield pointwise convergence of $\gamma$ to $\gamma_0$ at the level of the density functions. As mentioned above, such estimates were first considered in \cite{BLN_Selecta} and here we significantly develop this line of investigation by understanding the effect of adding smoothness to the initial data $\gamma_0$ and also, in the spirit of work of Sj\"ogren--Sj\"olin \cite{SS89} and Barcel\'o--Bennett--Carbery--Rogers \cite{BBCR} (in the classical single-particle case), to provide information on the size of the so-called divergence sets where pointwise convergence to the initial data fails to hold. Our new results will appear in Section \ref{section:mainresults}; in advance of that, it will be helpful to include a more detailed discussion of the known results regarding the Strichartz estimates \eqref{e:ONSqr} and the maximal-in-time estimates.

\subsection{Prior work on \eqref{e:ONSqr}} 
Firstly, we observe that the case $\beta = 1$ is equivalent to the classical Strichartz estimate
\begin{equation} \label{e:classicalS}
\| Uf \|_{L^q_tL^r_x} \lesssim \|f\|_{\dot{H}^s}.
\end{equation}
Indeed, given \eqref{e:ONSqr} and taking all but one of the sequence $(\lambda_j)_j$ to be zero (and rescaling), we obtain \eqref{e:classicalS}. Conversely, from the triangle inequality and an application of \eqref{e:classicalS} for each $f_j$, one quickly obtains \eqref{e:ONSqr} with $\beta = 1$.  Notice that the latter argument makes no use of the orthogonality of the $f_j$ and thus one would like to understand how much gain (if any) can be sought from the orthogonality by raising $\beta \geq 1$ as far as possible. It turns out that the optimal value of $\beta$ depends on $q$ and $r$ in an interesting way and we now describe the known results in this direction. In order to do so, we first consider the case $r < \infty$ and divide the discussion into the so-called \emph{sharp admissible} and \emph{non-sharp admissible} cases, and following that focus on the case $r = \infty$ (with a brief discussion of the other boundary cases $q = 2,\infty$).

\subsubsection{The sharp admissible case with $r < \infty$} 

Following standard terminology, when $\frac{1}{q} = \frac{d}{2}(\frac{1}{2} - \frac{1}{r})$ holds, we shall refer to this case as the \emph{sharp admissible} case; here the Sobolev exponent $s$ coincides with zero and the initial data belong to $L^2(\mathbb{R}^d)$.

First, let us consider the case $d \geq 3$. The Keel--Tao endpoint corresponds to $(q,r) = (2,\frac{2d}{d-2})$ and it was shown in \cite{KeelTao} that \eqref{e:classicalS} holds. This answered a long-standing question about the validity of the endpoint case and interpolation with the easy estimate at $(q,r) = (\infty,2)$ (i.e. conservation of energy) yields all possible Strichartz estimates \eqref{e:classicalS} in the sharp admissible case. We refer the reader to  \cite{KeelTao} for discussion and references for earlier work on the non-endpoint cases. 

Somewhat curiously, at the Keel--Tao endpoint $(q,r) = (2,\frac{2d}{d-2})$, the optimal value of $\beta$ for \eqref{e:ONSqr} is 1 and there is in fact no room to extract any gain from the orthogonality of the $f_j$. This phenomenon was observed by Frank and Sabin \cite{FS_Survey} in which, more generally, they established  that \eqref{e:ONSqr} fails if $\beta > \frac{q}{2}$. Earlier, Frank \emph{et al.} \cite{FLLS} showed that \eqref{e:ONSqr} also fails if $\beta > \frac{2r}{r + 2}$. These thresholds coincide when $r = \frac{2(d + 1)}{d-1}$ and this turns out to be the endpoint case in the sense that the estimate \eqref{e:ONSqr} is known to be true with
\begin{equation} \label{e:FSlower}
\text{$\beta \leq \frac{2r}{r + 2}$ when $r \in \bigg[2,\frac{2(d + 1)}{d-1}\bigg)$}
\end{equation}
and
\begin{equation} \label{e:FSupper}
\text{$\beta < \frac{q}{2}$ when $r \in \bigg[\frac{2(d + 1)}{d-1},\frac{2d}{d-2}\bigg)$.}
\end{equation}
The estimates in \eqref{e:FSlower} for $r \in [2,\frac{2(d + 2)}{d}]$ were first established in \cite{FLLS} and later extended to $r \in [2,\frac{2(d + 1)}{d-1})$ in \cite{FS_AJM}. As observed in \cite{FS_Survey}, the estimates in \eqref{e:FSupper} follow from those in \eqref{e:FSlower} by an interpolation argument between \eqref{e:ONSqr} with $\beta = 1$ at the Keel--Tao endpoint $r = \frac{2d}{d-2}$ and estimates from \eqref{e:FSlower} with $r$ less than, but arbitrarily close to\footnote{This argument leaves open the case $\beta = \frac{q}{2}$ in \eqref{e:FSupper} and, as far as we are aware, this remains a very interesting open problem. This would follow if we could extend \eqref{e:FSlower} to $r = \frac{2(d+1)}{d-1}$ but unfortunately, as shown in \cite{FLLS}, such an estimate is false.}, the exponent $\frac{2(d+1)}{d-1}$.

When $d = 2$, the known results are of a somewhat similar nature; for instance, \eqref{e:FSlower} holds without modification (due to \cite{FLLS, FS_AJM}). However, at the Keel--Tao exponent we have $r = \infty$ and it is known that the classical Strichartz estimate
\begin{equation} \label{e:classicalSfalsed=2}
\| Uf \|_{L^2_tL^\infty_x(\mathbb{R} \times \mathbb{R}^2)} \lesssim \|f\|_{L^2(\mathbb{R}^2)}
\end{equation}
is false; see, for example, Montgomery-Smith \cite{MontSmith}. However, one may still obtain \eqref{e:FSupper} as it stands for $d = 2$ (in particular, not including $r = \infty$) by using \eqref{e:classicalS} with finite and sufficiently large values of $r$.

Finally, when $d = 1$, the estimates in \eqref{e:FSlower} again hold as stated (thanks to \cite{FLLS, FS_AJM}) and since the situation in \eqref{e:FSupper} does not arise, this completes the picture for the sharp admissible cases where $r < \infty$. 

\subsubsection{The non-sharp admissible case with $r < \infty$}
We refer to the case $\frac{1}{q} < \frac{d}{2}(\frac{1}{2} - \frac{1}{r})$ as the \emph{non-sharp admissible} case. As long as $q, r < \infty$, the results in \cite{FLLS, FS_AJM, FS_Survey} were extended to the non-sharp admissible case in \cite{BHLNS} (see also \cite{BLN_Forum}). To state the result, we introduce the notation $\beta(q,r)$ for the exponent satisfying the relation
\[
\frac{d}{2\beta(q,r)} = \frac{1}{q} + \frac{d}{r},
\]
and observe that $\beta(q,r) = \frac{2r}{r + 2}$ in the sharp admissible case.
\begin{theorem}[\cite{BHLNS}] \label{t:BHLNS}
Let $d \geq 1$, $q,r \in [2,\infty)$, and $\frac{1}{q} < \frac{d}{2}(\frac{1}{2} - \frac{1}{r})$. Then the estimate \eqref{e:ONSqr} holds in each of the following cases.

\emph{(i)} $\beta \leq \beta(q,r)$ when $\frac{d}{r} > \frac{d-1}{q}$.

\emph{(ii)} $\beta < \frac{q}{2}$ when $\frac{d}{r} \leq \frac{d-1}{q}$.
\end{theorem}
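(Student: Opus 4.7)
The approach is the Frank--Sabin duality principle followed by kernel analysis and complex interpolation in Schatten spaces. Writing the left-hand side of \eqref{e:ONSqr} as a trace, dualising the $\ell^\beta$ constraint against its identification with the Schatten class $\mathfrak{S}^\beta$, and then performing a $TT^*$ step, \eqref{e:ONSqr} in this non-sharp admissible regime is equivalent to the operator-class bound
\[
\| M_W\, e^{it\Delta}\, |\nabla|^{-2s}\, e^{-it\Delta}\, M_{\overline W}\|_{\mathfrak{S}^\beta(L^2(\mathbb{R}\times\mathbb{R}^d))} \lesssim \|W\|_{L^{2(q/2)'}_tL^{2(r/2)'}_x}^2,
\]
where $s = \frac{d}{2} - \frac{d}{r} - \frac{2}{q} > 0$ and $M_W$ denotes multiplication by $W$. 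The kernel of the middle operator obeys the pointwise bound
\[
|e^{i\tau\Delta}|\nabla|^{-2s}(z)| \lesssim (|\tau|^{1/2} + |z|)^{-(d-2s)}, \qquad 0 < s < \tfrac{d}{2},
\]
which follows by inserting the explicit Schr\"odinger kernel into the Riesz potential representation and exploiting parabolic scaling.

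Two families of endpoint Schatten bounds then feed the interpolation. At the Hilbert--Schmidt endpoint ($\mathfrak{S}^2$), the squared HS norm is the explicit double integral
\[
\iint |W(t,x)|^{2}|W(t',x')|^{2}\, (|t-t'|^{1/2}+|x-x'|)^{-2(d-2s)}\, dt\, dx\, dt'\, dx',
\]
which can be controlled by $\|W\|^4$ in an appropriate mixed norm by splitting $(|\tau|^{1/2}+|z|)^{-\gamma}\le |\tau|^{-\gamma_1/2}|z|^{-\gamma_2}$ with $\gamma_1+\gamma_2=\gamma$ and iterating the classical Hardy--Littlewood--Sobolev inequality in $t$ and $x$ separately. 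At the opposite Schatten-$\infty$ endpoint, the bound is furnished for free by the classical non-sharp admissible Strichartz estimate \eqref{e:classicalS}, i.e.\ the $\beta=1$ instance of \eqref{e:ONSqr}, which holds under the standing hypotheses.

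Stein's complex interpolation applied to the analytic family
\[
T_z := M_W\, e^{it\Delta}\, |\nabla|^{-z}\, e^{-it\Delta}\, M_{\overline W}
\]
on a strip, combined with the freedom to pick the HS endpoint at a favourable auxiliary pair $(q_2,r_2)$, then delivers the full collection of Schatten bounds parametrised by $(q,r,\beta)$. A direct exponent computation, using that $\beta(q,r)\le q/2$ is equivalent to $\frac{d}{r}\ge\frac{d-1}{q}$, shows that the maximal $\beta$ reachable by the scheme is exactly $\beta(q,r)$ throughout the region $\frac{d}{r}>\frac{d-1}{q}$; this yields case (i), with $\beta<\beta(q,r)$ following immediately from further interpolation with the $\mathfrak{S}^\infty$ endpoint. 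In case (ii), $\frac{d}{r}\le\frac{d-1}{q}$, the formal exponent $\beta(q,r)$ exceeds $q/2$ and is thus unattainable; one chooses an auxiliary pair $(q_0,r_0)$ in the case (i) region arbitrarily close to the critical curve $\frac{d}{r}=\frac{d-1}{q}$, applies case (i) at $(q_0,r_0)$, and interpolates one further time with the $\beta=1$ Strichartz endpoint to capture every $\beta<q/2$. The principal technical obstacle is the careful bookkeeping of exponents through the mixed-norm parabolic HLS inequality together with the proper set-up of the Stein analytic family: one must verify admissible growth of $T_z$ on the boundary of the strip and check that the interpolated Schatten exponent is realised exactly at the target pair $(q,r)$, since the parabolic scaling couples the time and space integrabilities of $W$ with both the Sobolev power and the Schatten exponent.
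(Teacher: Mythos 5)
The result you are proving is quoted in the paper from \cite{BHLNS} without proof, so there is no proof here to compare against directly; what can be compared is your scheme against the BHLNS methodology the paper describes (frequency-localised dispersive estimates, Bourgain's summation trick, and Lorentz refinements) and the closely parallel proof the paper itself gives for the $r=\infty$ boundary case. Against that yardstick, there are genuine gaps in your proposal. First, the claimed pointwise kernel bound $|e^{i\tau\Delta}|\nabla|^{-2s}(z)|\lesssim(|\tau|^{1/2}+|z|)^{-(d-2s)}$ is false whenever $s<\tfrac{d}{4}$. After rescaling, $K_1(a)=\int e^{ia\cdot\xi+i|\xi|^2}|\xi|^{-2s}\,d\xi$ has a non-degenerate stationary point at $\xi=-a/2$ where the amplitude is $\sim|a|^{-2s}$, so for large $|a|$ one only gets $|K_1(a)|\lesssim|a|^{-\min(2s,\,d-2s)}$; your claimed decay $|a|^{-(d-2s)}$ is strictly too strong precisely when $2s<d-2s$. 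The dispersive input actually used in the paper (Lemma \ref{l:GLNY}) is a pure $|t|^{-2/q}$ bound, uniform in $x$ and with a frequency cutoff; it is not a pointwise parabolic kernel estimate of the kind you posit.

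The more fundamental problem is structural. After the duality step (where, incidentally, the target Schatten class should be $\mathcal{C}^{\beta'}$, not $\mathcal{C}^\beta$), interpolation between the Hilbert--Schmidt endpoint $\mathcal{C}^2$ and the operator-norm endpoint $\mathcal{C}^\infty$ can only produce $\mathcal{C}^{\beta'}$ with $\beta'\in[2,\infty]$, i.e.\ $\beta\in[1,2]$ on the primal side. But $\beta(q,r)$ is unbounded in the region of case (i): e.g.\ with $q=r$ one has $\beta(q,q)=\tfrac{dq}{2(d+1)}\to\infty$, and also case (ii) allows $\beta$ up to $q/2$, which exceeds $2$ whenever $q>4$. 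So the endpoint $\beta=\beta(q,r)$, and indeed much of the range, is unreachable by a $\mathcal{C}^2$--$\mathcal{C}^\infty$ scheme alone. What is missing is the third, trace-class endpoint: a $\mathcal{C}^1$ bound for the frequency-localised operator, obtained not from kernel estimates or Strichartz but from orthogonality/Bessel's inequality (exactly as in \eqref{e:ONSqrfreqlocal} with $q=\infty$ and in the $\mathcal C^1$ bound \eqref{i:C1 loc} of the present paper). It is the interplay of this $\mathcal{C}^1$ input with the $\mathcal{C}^2$ dispersive input, summed dyadically via Lemma \ref{l:Bourgain's trick} (or, for strong-type endpoints, via Lorentz-refined bilinear interpolation as in the Keel--Tao scheme), that produces the full $\beta$-range in Theorem \ref{t:BHLNS}. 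Relatedly, the Lorentz-space gain that BHLNS needed to upgrade restricted weak-type to strong-type at $\beta=\beta(q,r)$ does not appear anywhere in your proposal; Stein interpolation of the analytic family alone does not supply it.
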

It was also shown in \cite{BHLNS} that 
\[
\beta \leq \min  \bigg\{ \beta(q,r), \frac{q}{2} \bigg\}
\]
is a necessary condition for \eqref{e:ONSqr} to hold, and thus Theorem \ref{t:BHLNS} is close to optimal.

\subsection{The boundary cases}
We shall refer the cases in the admissible region $\frac{1}{q} \leq \frac{d}{2}(\frac{1}{2} - \frac{1}{r})$, $q,r \in [2,\infty]$, when $q = 2$, $q = \infty$ and $r = \infty$ as boundary cases. As we have already noted, when $q = 2$ the only available estimates of the form \eqref{e:ONSqr} are when $\beta = 1$. 

The case $q = \infty$ is interesting and discussing this case will also naturally lead us to the wider context of extending classical estimates to the setting of orthonormal systems. To obtain \eqref{e:ONSqr} with $q = \infty$, one may invoke Lieb's version of the Sobolev inequality\footnote{In fact, Lieb \cite{Lieb_Sobolev} proved a somewhat stronger estimate with $\|\lambda\|_\infty^{1 - \frac{2}{r}} \|\lambda \|_1^{\frac{2}{r}}$ on the right-hand side of \eqref{e:ONSobolev}.}
\begin{equation} \label{e:ONSobolev}
\bigg\| \sum_j \lambda_j |f_j|^2 \bigg\|_{L^{\frac{r}{2}}(\mathbb{R}^d)} \lesssim \| \lambda\|_{\beta}
\end{equation}
for orthonormal systems $(f_j)_j$ in $\dot{H}^s(\mathbb{R}^d)$, where $r \in (2,\infty)$, $s = \frac{d}{2} - \frac{d}{r}$ and $\beta < \frac{r}{2}$. Using \eqref{e:ONSobolev}, and the obvious fact that orthonormality in $\dot{H}^s(\mathbb{R}^d)$ is preserved under the Schr\"odinger flow $e^{it\Delta}$ for each fixed $t \in \mathbb{R}$, we obtain \eqref{e:ONSqr} when $q = \infty$, $r \in (2,\infty)$ and $\beta < \frac{r}{2}$. Moreover, as shown in \cite[Proposition 7.1]{BHLNS},  this result cannot be extended to $\beta = \frac{r}{2}$ (even with a weak-type norm $L^{\frac{r}{2},\infty}_x$ on the left-hand side). 
Since $\beta(\infty,r) = \frac{r}{2}$, this gives a complete picture for the case $q = \infty$.

At this point, we digress very slightly, and note that prior to the appearance of \eqref{e:ONSobolev}, a few years earlier Lieb and Thirring \cite{LiebThirring} established an inequality of the same spirit associated with the Gagliardo--Nirenberg--Sobolev inequality. This inequality, referred to as the \emph{Lieb--Thirring inequality}, was key to their proof of stability of matter; in addition to \cite{LiebThirring}, we also refer the reader to \cite{Lieb_BAMS} for further details. For wider discussion on the pursuit of obtaining versions of classical inequalities for orthonormal systems, and further examples, we direct the interested reader to \cite{Frank_ICM, Frank_9, FrankSabin_Adv, Nakamura, Nguyen}.

Returning to \eqref{e:ONSqr} in the boundary case $r = \infty$, let us first point out that a  resolution of the boundary cases of the classical Strichartz estimates \eqref{e:classicalS} has only very recently been completed thanks to the work of Guo--Li--Nakanishi--Yan \cite{GLNY}. In particular, it is shown in \cite{GLNY} that the estimate \eqref{e:classicalSfalsed=2} is also false in higher dimensions; that is
\begin{equation} \label{e:classicalSfalse}
\| Uf \|_{L^2_tL^\infty_x(\mathbb{R} \times \mathbb{R}^d)} \lesssim \|f\|_{\dot{H}^{\frac{d-2}{2}}(\mathbb{R}^d)}
\end{equation}
fails for all $d \geq 3$. We also remark that, for any $d \geq 1$,
\begin{equation} \label{e:classicalSfalse_allq}
\| Uf \|_{L^q_tL^\infty_x(\mathbb{R} \times \mathbb{R}^d)} \lesssim \|f\|_{\dot{H}^{\frac{d}{2}-\frac{2}{q}}(\mathbb{R}^d)}
\end{equation}
is also false when $q = \infty$ (this follows from the well-known failure of the corresponding Sobolev embedding estimate), but holds for $q \in (2,\infty)$ when $d \geq 2$, and holds for $q \in [4,\infty)$ when $d = 1$ (see, for example, \cite[Section 2]{GLNY}).

For orthonormal systems, very few results regarding \eqref{e:ONSqr} are currently available in the case $r = \infty$. Here $d = 1$ is somewhat special in the sense that there is a sharp-admissible case (i.e. $(q,r) = (4,\infty)$) and whether one can go beyond $\beta = 1$ was raised by Frank--Sabin \cite{FS_Survey}. In this specific case, it seems natural to conjecture that
\begin{equation} \label{e:d=1boundaryend}
\bigg\| \sum_j \lambda_j |Uf_j|^2 \bigg\|_{L^{2}_tL^\infty_x(\mathbb{R} \times \mathbb{R})} \lesssim \| \lambda \|_{\ell^\beta}
\end{equation}
holds for $\beta < 2$. However, as far as we are aware, whether this holds or not is a challenging open problem. The weak-type version
\begin{equation}\label{e:d=1boundaryend_weak}
\bigg\| \sum_j \lambda_j |Uf_j|^2 \bigg\|_{L^{2,\infty}_tL^\infty_x(\mathbb{R} \times \mathbb{R})} \lesssim \| \lambda \|_{\ell^\beta}
\end{equation}
for $\beta < 2$ was established very recently in \cite{BLN_Selecta}, but it appears to be non-trivial to upgrade this to a strong-type estimate\footnote{In \cite{BLN_Selecta}, the strong-type estimate \eqref{e:d=1boundaryend} was observed to hold in the smaller range $\beta \leq \frac{4}{3}$.}. We also remark that one cannot hope for \eqref{e:d=1boundaryend_weak} with $\beta \leq 2$; as we have already pointed out, the failure of the strong-type estimate \eqref{e:d=1boundaryend} (i.e. the missing endpoint in \eqref{e:FSlower} when $d = 1$) was demonstrated in \cite{FLLS}. In fact, by using the fact that Kakeya sets in $\mathbb{R}^2$ with zero Lebesgue measure exist, one can show that even the weak-type estimate \eqref{e:d=1boundaryend_weak} fails when $\beta = 2$ (see \cite{BHLNS}).

As far as we are aware, there are no further results available in the literature regarding the case $r = \infty$ and we present our new results in this direction below in Section \ref{section:mainresults}.

\subsection{The pointwise convergence problem}

Carleson's pointwise convergence problem for the Schr\"odinger flow $e^{it\Delta}f$ is concerned with identifying as large as possible class of initial data $f$ for which 
\begin{equation} \label{e:Carleson}
\lim_{t \to 0} e^{it\Delta}f(x) = f(x)
\end{equation}
holds for almost all $x \in \mathbb{R}^d$ (with respect to Lebesgue measure). Typically this is formulated in terms of data in the inhomogenous Sobolev space $H^s(\mathbb{R}^d)$ and then one wishes to identify the minimal regularity $s \in \mathbb{R}$ which guarantees \eqref{e:Carleson} holds for all $f \in H^s(\mathbb{R}^d)$. The sharpness of $s = \frac{1}{4}$ as the regularity threshold when $d = 1$ goes back to \cite{Carleson, DK}, and remarkable recent breakthroughs in \cite{Bourgain, DuGuthLi, DuZhang} have identified the threshold regularity\footnote{More precisely, when $d \geq 2$, for almost everywhere pointwise convergence to hold for all $f \in H^s(\mathbb{R}^d)$ it is known that $s > \frac{d}{2(d + 1)}$ is sufficient and $s \geq \frac{d}{2(d + 1)}$ is necessary, for which the reader is also referred to a survey paper \cite{Pr20} by Pierce, but the critical case remains open. When $d = 1$, it is a classical result that $s \geq \frac{1}{4}$ is necessary and sufficient.} to be $\frac{d}{2(d+1)}$ for all $d \geq 2$.

The problem described above has a natural analogue in the context of solutions to the reduced Hartree--Fock equation \eqref{e:vonNeumann}.  The interaction-free version of this equation \eqref{e:vonNeumann}, which we shall refer to as the von Neumann--Schr\"odinger equation (also known as the quantum Liouville equation) takes the form 
\begin{equation} \label{e:vonNeumannfree}
i \partial_t \gamma = [-\Delta,\gamma], \quad \gamma(0) = \gamma_0
\end{equation}
and, we recall, describes the time-evolution of a density operator. One may reconcile this with the Schr\"odinger flow $e^{it\Delta}f$ 
by identifying the initial data $f$ with the operator $\gamma_0 = \Pi_f$. Here we are assuming that $\|f \|_{L^2} = 1$ and $\Pi_f$ is the orthogonal projection operator onto the span of $f$ given by
\[
\Pi_f g = \langle g,f \rangle_{L^2} f.
\]
Indeed, in this case, one can easily verify that the solution of the von Neumann--Schr\"odinger equation is $\Pi_{e^{it\Delta}f}$. The above formulation in terms of the density operator provides a natural framework for modeling infinitely many (fermionic) particles.  

In general, the solution to \eqref{e:vonNeumannfree} is given by
\[
\gamma(t) = e^{it\Delta} \gamma_0 e^{-it\Delta}.
\]
Naturally associated with the flow $\gamma(t)$ is the density $\rho_{\gamma(t)}$ which, formally, is given by evaluating the integral kernel of the operator (a function on $\mathbb{R}^d \times \mathbb{R}^d$) on the diagonal. For example, we have $
\rho_{\gamma_0}(x) = \sum_{j = 1}^N \lambda_j |f_j(x)|^2
$
and
$
\rho_{\gamma(t)}(x) = \sum_{j = 1}^N \lambda_j |e^{it\Delta}f_j(x)|^2
$
if $\gamma_0$ is the finite-rank operator associated with orthonormal functions $f_1,\ldots,f_N \in L^2(\mathbb{R}^d)$ and scalars $\lambda_1,\ldots,\lambda_N$ given by
\[
\gamma_0f(x) = \sum_{j = 1}^N \lambda_j \Pi_{f_j}f(x) = \int_{\mathbb{R}^d} f(y) \sum_{j = 1}^N \lambda_j f_j(x) \overline{f_j(y)} \, \mathrm{d}y.
\]
With some care, one may extend the meaning of these densities to the infinite-rank case and beyond the trace class of operators; see 
the beginning of Section \ref{s:PW}. Associated with such densities, a natural analogue of Carleson's problem for the von Neumann--Schr\"odinger equation was raised in \cite{BLN_Selecta} and is concerned with finding as large as possible class of initial data $\gamma_0$ such that
\begin{equation} \label{e:aepointwise}
\lim_{t \to 0} \rho_{\gamma(t)}(x) = \rho_{\gamma_0}(x)
\end{equation}
holds almost everywhere with respect to Lebesgue measure. At the critical regularity $s = \frac{1}{4}$ in one spatial dimension, the following was proved in \cite{BLN_Selecta}.
\begin{theorem}[\cite{BLN_Selecta}] \label{t:pointwiseBLN}
The weak-type maximal-in-time estimate
\begin{equation} \label{e:ONmaximalweak}
\bigg\| \sum_j \lambda_j |Uf_j|^2 \bigg\|_{L^{2,\infty}_xL^\infty_t(\mathbb{R} \times \mathbb{R})} \lesssim \| \lambda \|_{\ell^\beta}
\end{equation}
holds for orthonormal functions $(f_j)_j$ in $\dot{H}^{\frac{1}{4}}(\mathbb{R})$ and $\beta < 2$. Consequently, if $\gamma_0 \in  \mathcal{C}^{\beta,\frac{1}{4}}$ and $\beta < 2$, then the density function $\rho_{\gamma(t)}$ satisfies \eqref{e:aepointwise} for almost every $x \in \mathbb{R}$.
\end{theorem}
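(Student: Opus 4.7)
The plan is a three-stage argument: linearize the maximal-in-time operator, apply the Frank--Sabin duality principle to convert the orthonormal estimate into an appropriate Schatten-class bound, and prove that Schatten bound by interpolating between a Hilbert--Schmidt estimate and the classical single-particle Kenig--Ponce--Vega maximal inequality. The pointwise convergence assertion then follows by a standard density argument.

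First I would remove the supremum by introducing a measurable selection $t:\R\to\R$ and setting $T_t f(x):=e^{it(x)\Delta}f(x)$. Writing $f_j=(-\Delta)^{-1/8}g_j$ with $(g_j)_j$ orthonormal in $L^2(\R)$ and letting $A_t:=T_t(-\Delta)^{-1/8}$, the task reduces to establishing
$$\bigg\|\sum_j\lambda_j|A_t g_j|^2\bigg\|_{L^{2,\infty}_x(\R)}\lesssim\|\lambda\|_{\ell^\beta}$$
uniformly in $t(\cdot)$, for every $\beta<2$. Applying the Frank--Sabin duality principle in its weak-Lorentz form converts this, for each such $\beta$, into a Schatten bound of the schematic shape
$$\bigl\|\sqrt{|W|}\,A_t A_t^*\,\sqrt{|W|}\bigr\|_{\mathfrak S^{\beta'}}\lesssim\|W\|_{L^{2,1}_x(\R)}$$
for all weights $W$, where $\beta'$ denotes the exponent conjugate to $\beta$ (with a weak-Schatten interpretation near the endpoint $\beta=2$).

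The heart of the argument is this Schatten bound, which I would prove by interpolation between two endpoints. At the operator-norm endpoint, the required inequality reduces, uniformly in the choice of $t(\cdot)$, to the classical Kenig--Ponce--Vega maximal estimate $\|\sup_t|e^{it\Delta}h|\|_{L^{2,\infty}_x(\R)}\lesssim\|h\|_{\dot H^{1/4}(\R)}$. At the Hilbert--Schmidt endpoint one proceeds by direct kernel computation: the operator $A_t A_t^*$ has an explicit oscillatory integral kernel inherited from the one-dimensional free Schr\"odinger propagator $(4\pi i s)^{-1/2}e^{i(x-y)^2/(4s)}$, and one bounds its weighted $L^2\otimes L^2$ norm via a Hausdorff--Young / stationary phase computation. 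Complex interpolation in the Schatten scale between these two endpoints, combined with a real interpolation step to capture the weak-type nature of $L^{2,\infty}_x$, then yields the required Schatten bound for all $\beta<2$.

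Finally, the pointwise convergence assertion follows by a routine density argument: given $\gamma_0\in\mathcal C^{\beta,1/4}$, one approximates it in the $\mathcal C^{\beta,1/4}$ norm by finite-rank operators with Schwartz eigenfunctions, for which \eqref{e:aepointwise} is immediate by continuity of the free flow; the weak-type maximal estimate just established then controls the Lebesgue measure of the exceptional set for the tail. The step I expect to be most delicate is the Hilbert--Schmidt endpoint in the third stage: the space-dependent selection $t(x)$ destroys translation invariance, and one must extract the correct weight dependence uniformly in $t(\cdot)$, while simultaneously arranging the interpolation so that the permissible range of $\beta$ extends all the way up to the sharp threshold $\beta=2$ predicted by the Kakeya-set obstruction mentioned earlier in the introduction.
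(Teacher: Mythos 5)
This theorem is quoted from \cite{BLN_Selecta} and is not re-proved in the present paper; the route taken in \cite{BLN_Selecta} is quite different from yours. There one first establishes the weak-type \emph{boundary Strichartz} estimate $\|\sum_j\lambda_j|Uf_j|^2\|_{L^{2,\infty}_tL^\infty_x}\lesssim\|\lambda\|_{\ell^\beta}$ for $\beta<2$, and then exchanges the roles of space and time via the Kenig--Ponce--Vega change of variables $U_2\leftrightarrow U_{1/2}$, together with a further lemma (\cite[Lemma 4.2]{BLN_Selecta}) that restores the orthogonality the change of variables would otherwise destroy. Your plan---linearize the supremum, dualize via the Frank--Sabin principle, and interpolate in the Schatten scale between $\mathcal C^\infty$ and $\mathcal C^2$---is much closer in spirit to the direct argument the present paper uses for the local Theorem~\ref{t:maximal main} in Section~\ref{s:PW}, but as written it has a genuine gap.

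The gap is the Hilbert--Schmidt endpoint. After linearization and duality, the operator $W A_t A_t^*\overline W$ has kernel $W(x)K(x,y)\overline{W(y)}$ with $K(x,y)=\int e^{i((x-y)\xi+(t(x)-t(y))\xi^2)}|\xi|^{-1/2}\,\d\xi$, and the uniform dispersive bound (Lemma~\ref{l:disp space} with $d=1$) yields only $|K(x,y)|\lesssim|x-y|^{-1/2}$. The $\mathcal C^2$ norm you propose to estimate then controls $\iint|W(x)|^2|x-y|^{-1}|W(y)|^2\,\d x\,\d y$, and the kernel $|x-y|^{-1}$ sits exactly at the failing endpoint of Hardy--Littlewood--Sobolev/O'Neil in one dimension---consistent with the known failure of \eqref{e:ONmaximalweak} at $\beta=2$, but fatal for an interpolation scheme anchored at $\mathcal C^2$. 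The caveat you add about a ``weak-Schatten interpretation near $\beta=2$'' does not repair this: a weak-Schatten bound cannot be read off a pointwise kernel estimate the way a Hilbert--Schmidt bound can, so the ``direct kernel computation via Hausdorff--Young/stationary phase'' you describe never produces the $\mathcal C^{2,\infty}$-type endpoint you would need. What actually closes the range $\beta<2$ in both relevant proofs is an intermediate decomposition with quantified decay---dyadic in $|x-y|$ (the $\mathcal T_l$ operators of Section~\ref{s:PW}) or dyadic in frequency with Bourgain's summation trick (Lemma~\ref{l:Bourgain's trick}, as in \cite{BLN_Selecta})---which allows piece-by-piece Schatten interpolation followed by a convergent sum, so that the gain is won strictly inside $\beta<2$ rather than at the endpoint. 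Without such a decomposition your interpolation has no second anchor.
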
 
In the above statement, $\mathcal{C}^{\beta,\frac{1}{4}}$ denotes a Sobolev-type Schatten space. More generally, $\C^{\beta,s}$ is given by the norm\footnote{Although Carleson's problem is typically considered with initial data in the inhomogeneous Sobolev spaces $H^s(\mathbb{R}^d)$, as in Theorem \ref{t:pointwiseBLN}, we shall work in the setting of homogeneous Sobolev spaces $\dot{H}^s(\mathbb{R}^d)$ and their associated Schatten spaces $\C^{\beta,s}$. Our arguments may be easily be modified to the inhomogenous setting.}
\[
\|\gamma\|_{\C^{\beta,s}} = \||D|^s \gamma |D|^s \|_{\C^{\beta}},
\]
where $|D|^s$ is the Fourier multiplier operator with multiplier $|\xi|^s$ and $\mathcal{C}^\beta$ is the Schatten space of order $\beta$ built over $L^2(\mathbb{R}^d)$; we refer the reader forward to Sections~\ref{s:Motivation Outline} and \ref{s:PW} for more precise definitions. For now, we remark that the Schatten spaces $\mathcal{C}^\beta$ are nested (they increase in size as $\beta$ increases) and thus, for a fixed level of regularity $s$, it is natural to try and identify the largest possible $\beta$ for which \eqref{e:aepointwise} holds. Although we are unaware of a proof, for $s = \frac{1}{4}$ it seems reasonable to believe that $\beta < 2$ is the optimal range. Indeed, the maximal estimate \eqref{e:ONmaximalweak} was shown to fail for $\beta \geq 2$ in \cite{BLN_Selecta}.

There are many directions to develop Theorem \ref{t:pointwiseBLN}, some of which have already been raised in \cite{BLN_Selecta} and these open problems provided an important source of inspiration for the present paper. 
In particular, one may ask about: (a) \emph{the effect of imposing higher regularity on the initial data},  (b) \emph{extending to higher dimensions}, (c) \emph{generalizations to other equations}, (d) \emph{how large is the set of points at which convergence fails}.

Regarding (a), one may expect a gain in the range of allowable $\beta$ if the initial data is assumed to be sufficiently smooth. When $d = 1$ and $\gamma_0 \in \mathcal{C}^{\beta,s}$ with $s \in (\frac{1}{4},\frac{1}{2})$, an application of our boundary Strichartz estimates will reveal that $\beta < \frac{1}{1-2s}$ is allowable; this argument is in the spirit of \cite{BLN_Selecta} and relies heavily on the assumption the spatial dimension is one (we elaborate on this in Section \ref{subsection:Strichartz}). Making significant progress on (a)--(d) beyond this seems to require a different, more direct, approach. Even if we remain in the case $d = 1$, we can illustrate this by considering Carleson's problem
\begin{equation}\label{e:pointwise fractional}
    \lim_{t\to0}e^{it(-\Delta)^{m/2}}f(x)
    =
    f(x)\quad\ae
\end{equation}
for the fractional Schr\"{o}dinger propagators $e^{it(-\Delta)^{m/2}}$ with $m\in(0,\infty)$. For this, it is known that the threshold is remarkably different when $m\in(0,1)$ and $m \in (1,\infty)$\footnote{The case $m = 1$ also has a different nature; see the remark at the end of Section \ref{subsection:Strichartz} for further details.}. Indeed, when $m \in (0,1)$, Walther \cite{Wl97} proved $s>\frac m4$ is sufficient for \eqref{e:pointwise fractional} and $s\geq\frac m4$ is necessary. On the other hand, for $m \in (1,\infty)$, Sj\"{o}lin \cite{Sj87} has shown that \eqref{e:pointwise fractional} holds if and only if $s\geq\frac14$. Since Strichartz estimates are ultimately built on the standard dispersive estimates for $e^{it(-\Delta)^{m/2}}$, and since these dispersive estimates do not depend on $m$, it seems necessary to adopt a more direct approach to obtain \eqref{e:pointwise fractional}, or more generally, to satisfactorily address problem (c) above.  

Let us also mention that \eqref{e:pointwise fractional} has also been (partially) addressed in higher dimensions and in terms of the divergence sets. For example, it is known that \eqref{e:pointwise fractional} holds if $s > \frac m2$ for $m \in (0,1)$ and $d \geq 1$ (see \cite{Cowling, Wl97}). More recently, by building on \cite{DuGuthLi, DuZhang}, Cho--Ko \cite{CK18} showed that $s>\frac{d}{2(d+1)}$ is sufficient for \eqref{e:pointwise fractional} when $m \in (1,\infty)$ and $d \geq 2$. Regarding divergence sets of the form
\[
\mathfrak{D}(f) := \{ x \in \mathbb{R}^d : \lim_{t \to 0} e^{it(-\Delta)^{m/2}}f(x) \neq f(x)\},
\]
the idea of estimating their size seems to stem from work of Sj\"{o}gren--Sj\"{o}lin \cite{SS89}, with fresh impetus coming from the more recent paper by Barcel\'o \textit{et al.} \cite{BBCR}. Amongst other results, Barcel\'o \emph{et al}. proved that\footnote{Here, $\dim_H$ denotes Hausdorff dimension.} 
\[
\sup_{f\in H^s}\dim_H
\mathfrak{D}(f)
=
d-2s
\]
when $s\in[\frac{d}{4},\frac{d}{2})$ if either $d = 1$ and $m > 1$, or $d \geq 2$ and $m = 2$. The argument in \cite[Proposition 3.1]{BBCR} for $d \geq 2$ makes special use of the assumption $m = 2$ in order to reduce matters to one-dimensional considerations. We note that a consequence of our approach in this paper is that we can extend the $d \geq 2$ result to general $m \in (1,\infty)$. We also remark that whilst the result in \cite{BBCR} is  definitive when $d = 1$, the higher dimensional problem appears to be very challenging and remains open for $s \in (\frac{d}{2(d + 1)},\frac{d}{4})$ (see for example \cite{DuZhang, EP22, LR19a,LR19b} for further details).

In the case of $m\in(0,1)$ and $d = 1$, Cho and the third author \cite{CS22} very recently proved
\[
\sup_{f\in H^s}\dim_H
\mathfrak{D}(f)
\leq
\max\left\{
1-2s,
\frac12+\frac{1-4s}{2(1-m)}
\right\},
\]
and it seems reasonable to believe that equality holds. Including \cite{CS22}, ideas from the literature on the classical (single function) version of Carleson's problem have also been a source of inspiration for the present work. In particular, we deviate significantly from \cite{BLN_Selecta} and approach (a)--(d) in the context of density functions \eqref{e:aepointwise} using direct arguments for proving maximal-in-time estimates. Our new results in this direction appear in Section \ref{subsection:maximalintime}.

\section{Main new results} \label{section:mainresults}
\subsection{Maximal-in-space estimates} \label{subsection:Strichartz}
Our main result concerning boundary Strichartz estimates is the following. 
\begin{theorem} \label{t:maximalspace}
The estimate 
\begin{equation} \label{e:maximalspace} 
\bigg\| \sum_j \lambda_j |Uf_j|^2 \bigg\|_{L^{\frac{q}{2}}_tL^\infty_x(\mathbb{R} \times \mathbb{R}^d)} \lesssim \| \lambda \|_{\ell^\beta}
\end{equation}
holds for systems of orthonormal functions $(f_j)_j$ in $\dot{H}^{s}(\mathbb{R}^d)$, $s = \frac{d}{2} - \frac{2}{q}$, and $\beta < \frac{q}{2}$ in each of the following cases.

\emph{(i)} $d = 1$ and $q \in (4,\infty)$. 

\emph{(ii)} $d \geq 2$ and $q \in (2,\infty)$.

\noindent Furthermore, the estimate \eqref{e:maximalspace} fails when $\beta > \frac{q}{2}$.
\end{theorem}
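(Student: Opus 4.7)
\emph{Duality.} The plan starts from the standard duality (originating in \cite{FLLS} and systematized in \cite{FS_Survey, BHLNS}) between density-function estimates for orthonormal systems and Schatten-norm bounds on spacetime interaction operators. Pairing $\rho := \sum_j \lambda_j |Uf_j|^2$ against a nonnegative weight $W(t,x)$ reduces \eqref{e:maximalspace} to the equivalent Schatten bound
\[
\bigl\| |D|^{-s} U^* M_W U |D|^{-s} \bigr\|_{\mathcal{C}^{\beta'}(L^2(\mathbb{R}^d))} \lesssim \|W\|_{L^{(q/2)'}_t L^1_x(\mathbb{R} \times \mathbb{R}^d)},
\]
to be proved for all $W \geq 0$ and all $\beta' > (q/2)'$, where $\beta' = \beta/(\beta-1)$. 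At the operator-norm endpoint $\beta' = \infty$, this reduces via a bilinear test-function argument to the classical Strichartz estimate \eqref{e:classicalSfalse_allq}, whose validity is precisely the content of cases (i) and (ii).

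\emph{Interpolation.} To cover the range $\beta' \in ((q/2)', \infty)$, the plan is to apply Stein-type complex interpolation between the operator-norm endpoint above and a finite-Schatten anchor obtained from Theorem \ref{t:BHLNS}. Choosing a non-sharp admissible pair $(q_1, r_1)$ with $r_1 < \infty$ on the scaling line $\frac{2}{q_1} + \frac{d}{r_1} = \frac{2}{q}$ (so that the Sobolev exponent $s_1$ coincides with $s$) and $\beta_1$ in the range allowed by Theorem \ref{t:BHLNS}, one obtains a bound on $\||D|^{-s} U^* M_W U |D|^{-s}\|_{\mathcal{C}^{\beta_1'}}$ in terms of $\|W\|_{L^{(q_1/2)'}_t L^{(r_1/2)'}_x}$; one can verify that $\beta_1$ can be taken arbitrarily close to $q/2$ in (i), and even beyond $q/2$ in (ii). The principal obstacle is that the two weight spaces $L^{(q/2)'}_t L^1_x$ and $L^{(q_1/2)'}_t L^{(r_1/2)'}_x$ are not nested, so a naive real interpolation of spaces will not deliver the target weight norm. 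The remedy is to work with an analytic family $T_z := V^* M_{W_z} V$ (where $V = U|D|^{-s}$) in which $W_z$ depends analytically on $z$ and reproduces the target $W$ at a specific $\Re z = \theta_\ast$, allowing Stein interpolation to deliver the Schatten bound directly in the target weight space; the positivity $s > 0$ throughout (i) and (ii) provides the room needed for this renormalization.

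\emph{Failure for $\beta > q/2$.} For the negative direction I would employ a Knapp-type plane-wave construction in the spirit of Frank--Sabin~\cite{FS_Survey}. In one dimension, take orthonormal $(f_j)_{j=1}^N$ in $\dot{H}^s(\mathbb{R})$ with Fourier transforms $\hat{f}_j(\xi) = |\xi_j|^{-s} \phi(\xi - \xi_j)$ for a Schwartz bump $\phi$ and unit-spaced $(\xi_j)$ at a common scale $|\xi_j| \sim N$; since $|Uf_j(t,x)| = |\xi_j|^{-s} |Uf_0(t, x - 2t\xi_j)|$ with a fixed Schwartz profile $f_0$, splitting the $t$-integration at the threshold $|t| \sim 1/N$ into the pile-up regime (where many shifted waves overlap) and the decoupled regime yields
\[
\Bigl\| \sum_{j=1}^N |Uf_j|^2 \Bigr\|_{L^{q/2}_t L^\infty_x} \gtrsim N^{2/q},
\]
so that comparison with $\|\lambda\|_{\ell^\beta} = N^{1/\beta}$ for $\lambda_j \equiv 1$ forces $\beta \leq q/2$ as $N \to \infty$. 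For $d \geq 2$, a suitably modified wave-packet construction (consistent with the general sharpness $\beta \leq q/2$ established in~\cite{BHLNS}) delivers the same growth rate and the same conclusion.
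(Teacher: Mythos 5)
Your duality reduction and the $\C^\infty$ endpoint are both sound: after dualizing, the estimate \eqref{e:maximalspace} is equivalent (via the paper's Lemma \ref{l:duality}, or equivalently your Schatten bound on $|D|^{-s}U^*M_WU|D|^{-s}$, which has the same singular values as $W^{1/2}U|D|^{-2s}U^*W^{1/2}$ after writing $\|W\|_{L_t^{(q/2)'}L_x^1}=\|W^{1/2}\|_{L_t^{\widetilde q}L_x^2}^2$), and the $\beta'=\infty$ case does reduce to \eqref{e:classicalSfalse_allq}. Your Knapp-type lower bound for $\beta>q/2$ is also essentially correct and in the same spirit as the paper's reference to \cite{BHLNS}.

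The genuine gap is in the interpolation step, and it is more serious than the acknowledged "obstacle." Interpolating the $\C^\infty$ anchor (dual weight in $L_t^{(q/2)'}L_x^1$) against a $\C^{\beta_1'}$ anchor imported from Theorem \ref{t:BHLNS} (dual weight in $L_t^{(q_1/2)'}L_x^{(r_1/2)'}$ with $(r_1/2)'<\infty$) necessarily drags the spatial exponent of the weight strictly above $1$ at any interior $\theta$, so dualizing back lands on some $L_t^{q/2}L_x^r$ with $r<\infty$, not $L_x^\infty$. The proposed remedy — the analytic family $W_z=|W|^{a+bz}$ with $V=U|D|^{-s}$ fixed — does not repair this. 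If one demands that both endpoint weight norms reduce to the \emph{same} mixed norm of $W$ (which is what is needed to close the Stein interpolation in a single target space), the two matching conditions $a(q/2)'=(a+b)(q_1/2)'$ and $a=(a+b)(r_1/2)'$, together with the scaling constraint $\frac{2}{q_1}+\frac{d}{r_1}=\frac{2}{q}$, force
\[
1-\frac2q=-\frac d2,\qquad\text{i.e.}\qquad q=\frac{4}{2+d},
\]
which is incompatible with $q>2$. So the renormalization has no room in the range of interest, contrary to the heuristic invoking $s>0$. Separately, for $d=1$ the anchor $\beta_1$ only approaches $q/2$ as $r_1\to\infty$, at which point the anchor itself degenerates; this is a second structural obstacle.

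The paper's route is genuinely different precisely to avoid this spatial-exponent drift. It never uses Theorem~\ref{t:BHLNS} as an anchor. Instead, in the dual Schatten formulation all anchor bounds are proved directly with the weight in $L_t^{\widetilde q,a}L_x^2$ — the spatial component is always $L^2_x$, i.e.\ dual to $L_x^\infty$, so nothing is lost in space and only the time Lorentz index $a\in\{2,4,\infty\}$ moves. The $\C^2$ and $\C^\infty$ anchors (Proposition~\ref{proposition:UpperHalfC2Cinfty}) follow from the dispersive kernel bound \eqref{e:dispersive} combined with O'Neil's Lorentz refinement of HLS. The analytic family then varies the \emph{power of $|D|$} (the complex exponent $|D|^{-2s+i\kappa}$), not the weight, in order to move along the line $s=\frac d2-\frac2q$; this is what replaces your weight renormalization. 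Finally, for $4<q<\infty$ the Lorentz-in-time gain in \eqref{i:C middle} is used together with a Keel--Tao-style bilinear interpolation of frequency-localized Schatten bounds (Lemma~\ref{l:freqlocal}, Proposition~\ref{prop:Proposition_4.1}) to upgrade the naive weak-type conclusion to a strong-type one. None of these ingredients appears in your proposal, and the interpolation as you've set it up cannot produce the required $L_x^\infty$ target.
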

 For $d \geq 2$, thanks to the failure of \eqref{e:classicalSfalse_allq} in both cases $q = 2, \infty$, the range of $q$ in (ii) cannot be extended. It remains an interesting open problem to determine whether one can extend the range of $q$ in (i) to $[4,\infty)$. 
 
 Regarding the summability exponent, the failure of \eqref{e:maximalspace} when $\beta > \frac{q}{2}$ can be seen by following the argument in \cite[Section 4.8]{BHLNS} and so we omit the details. What happens at the critical summability exponent $\beta = \frac{q}{2}$ seems to be a delicate matter. In Section \ref{subsection:motivation} we shall sketch an argument yielding restricted weak-type estimates when $\beta = \frac{q}{2}$ and $d \geq 2$. Interestingly, such estimates are not valid when $d = 1$. Indeed, if we take $q \in [4,\infty)$ and assume that
 \begin{equation} \label{est:sharpnessTheorem1.2}
\bigg\| \sum_j \lambda_j |Uf_j|^2 \bigg\|_{L^{\frac{q}{2},\infty}_tL^{\infty}_x(\R \times \R)} \lesssim \| \lambda \|_{\ell^{\frac{q}{2},1}}
\end{equation}
were to hold for all orthonormal systems $(f_j)_j$ in $\dot{H}^{\frac12 - \frac2q}(\R)$, then by a semi-classical limiting argument (see \cite{Sabin_survey, BHLNS}), we get the (``velocity average") estimate
\[
\biggl\| \int_{\R} f(x-tv, v) \frac{\d v}{|v|^{1- \frac{4}{q}}} \biggr\|_{L_t^{\frac{q}{2},\infty} L_x^{\infty}(\R \times \R)} \lesssim 
\|f \|_{L^{\frac{q}{2},1}}.
\]
However, by taking $f$ as the characteristic function of a sufficiently small neighbourhood of a measure-zero Kakeya set in $\R^2$, we can show that such an estimate cannot be true (see the proof of \cite[Theorem 5.3]{BHLNS}). We leave open the question of whether one can extend Theorem \ref{t:maximalspace} to the critical summability exponent $\beta = \frac{q}{2}$ when $d \geq 2$; conceivably, in the scale of Lorentz spaces, restricted weak-type estimates are the best that one can hope for.

Our proof of Theorem \ref{t:maximalspace} draws on ideas in \cite{BHLNS} and relies on widening the framework to incorporate Lorentz spaces. Indeed, restricted weak-type versions of \eqref{e:ONSqr} can be readily obtained by summing up certain frequency-localized estimates (which in turn are based on dispersive estimates) using the summation trick of Bourgain in Lemma \ref{l:Bourgain's trick} below. Also, in certain special cases, improved versions of \eqref{e:ONSqr} in the scale of Lorentz spaces are possible and offsetting this loss/gain, via interpolation, led to the desired strong-type estimates in \cite{BHLNS}. Broadly speaking we adopt this strategy to prove Theorem \ref{t:maximalspace} and we give a more informative outline of the proof in Section \ref{subsection:Outline}.

Theorem \ref{t:maximalspace} readily generalises to fractional Schr\"odinger propagators $U_m$, with $m \in (0,\infty) \setminus \{1\}$, where
\[
U_m f(t,x) = e^{it(-\Delta)^{m/2}}f(x) = \frac{1}{(2\pi)^d}\int_{\mathbb{R}^d} e^{i(x \cdot \xi + t|\xi|^m)} \widehat{f}(\xi) \, \mathrm{d}\xi.
\]
The propagators $U_m$ have drawn much attention from both physical and mathematical viewpoints in recent years (see for example \cite{IP, Karpman1, Karpman2, KPV, Laskin1, Laskin2}). As described above, the proof of Theorem \ref{t:maximalspace} rests on a dispersive estimate which is equally valid for general $U_m$ and it is easily checked that our proof in Sections \ref{s:Upper half} and \ref{s:lower half} goes through for $U_m$, as long as we consider orthonormal systems of data $(f_j)_j$ belonging to $\dot{H}^{s}(\mathbb{R}^d)$, $s = \frac{d}{2} - \frac{m}{q}$. In particular, the range of $d, q$ and $\beta$ is unchanged from the case $m = 2$ in Theorem \ref{t:maximalspace} above. 

One advantage of broadening the framework to general $U_m$ can be illustrated as follows. In the case of $d = 1$, a simple change of variables shows\footnote{Here, $A \simeq B$ means $A = cB$ for some positive constant $c$ whose exact value is not important.}
\[
U_mf(t,x) \simeq U_{1/m} f_+(x,t) + U_{1/m} f_-(-x,t), \qquad \widehat{f_\pm}(\xi) = \chi_{(0,\infty)}(\xi)\widehat{f}(\pm \xi^\frac{1}{m})\xi^{\frac{1}{m} - 1}
\]
which means that, essentially, one can swap the roles of space and time at the expense of switching from $U_m$ to $U_{1/m}$ (this trick has been used many times and goes back to Kenig--Ponce--Vega \cite{KPV}). Modulo an additional argument which takes care of the orthogonality condition (see \cite[Lemma 4.2]{BLN_Selecta}), this line of reasoning allows us to deduce the maximal-in-time estimate
\begin{equation} \label{e:ONmaximal}
\bigg\| \sum_j \lambda_j |U_mf_j|^2 \bigg\|_{L^{\frac{1}{1-2s}}_xL^\infty_t(\mathbb{R} \times \mathbb{R})} \lesssim \| \lambda \|_{\ell^\beta}
\end{equation}
for orthonormal functions $(f_j)_j$ in $\dot{H}^{s}(\mathbb{R})$, whenever $s \in (\frac{1}{4},\frac{1}{2})$ and\footnote{This range of $\beta$ is best possible for the estimate \eqref{e:ONmaximal} to hold. In fact, for $s \in (\frac{1}{4},\frac{1}{2})$, the failure of the following maximal-in-time estimate
\[
\bigg\| \sum_j \lambda_j |Uf_j|^2 \bigg\|_{L^{\frac{1}{1-2s},\infty}_xL^\infty_t(\mathbb{R} \times \mathbb{R})} \lesssim \| \lambda \|_{\ell^{\frac{1}{1-2s},1}}
\]
for orthornormal functions $(f_j)_j$ in $\dot{H}^s(\R)$ can be shown by adapting the argument discussed earlier for \eqref{est:sharpnessTheorem1.2}. We refer the reader to \cite{BLN_Selecta} for the details, where such an argument was given in the case $s = \frac{1}{4}$.} $\beta < \frac{1}{1-2s}$. Here we can take any $m \in (0,\infty) \setminus \{1\}$ but we note that the $(s,\beta)$ range is independent of $m$. As a result of the maximal estimate \eqref{e:ONmaximal}, it follows that if $\gamma_0 \in  \mathcal{C}^{\beta,s}$, then the density functions $\rho_{\gamma(t)}$ and $\rho_{\gamma_0}$ satisfy \eqref{e:aepointwise} for almost every $x \in \mathbb{R}$. Here
\[
\gamma(t) = e^{it(-\Delta)^{m/2}}\gamma_0e^{-it(-\Delta)^{m/2}}
\]
is the solution to fractional von Neumann--Schr\"odinger equation
\begin{equation} \label{e:fractionalvN-S} 
i\partial_t \gamma = -[(-\Delta)^{\frac{m}{2}},\gamma], \quad \gamma(0) = \gamma_0.
\end{equation}

As mentioned earlier, some of our original motivation to prove boundary Strichartz estimates arose from the problem of understanding the pointwise convergence \eqref{e:aepointwise} in the context of imposing higher regularity on the data, and thus address a question raised in \cite{BLN_Selecta}. However, if one is solely focused developing a better understanding of the pointwise convergence problem \eqref{e:aepointwise}, then it seems strongly desirable to have a more direct approach for proving maximal-in-time estimates. For a start, one can then make the problem more tractable by targeting local estimates (with respect to both time and space) rather than global estimates like those in \eqref{e:ONmaximal}. As we have already mentioned\footnote{For example, we refer the reader back to the discussion following Theorem \ref{t:pointwiseBLN}.}, obtaining a new viewpoint on \eqref{e:aepointwise} was another major source of motivation for the present paper and next we shall present our new results in this direction. Let us remark that by succeeding in this manner, we will profit in two different ways. Firstly, we are able to obtain, for the first time, estimates on the size of the set of points at which the convergence in \eqref{e:aepointwise} fails. Secondly, we are able to obtain what we believe to be sharp results for $U_m$ when $m \in (0,1)$; for such $m$, the $(s,\beta)$ range depends on $m$ and obtaining such an outcome by relying on Strichartz estimates (and the space-time switching trick) seems rather implausible.

\begin{remark}
For the one-sided wave propagator $U_1$, the classical Strichartz estimates and Carleson's problem have also been addressed. For Strichartz estimates, roughly speaking, the admissible exponents for $U_1$ on $\mathbb{R} \times \mathbb{R}^d$ correspond to those for $U$ on  $\mathbb{R} \times \mathbb{R}^{d-1}$, and to a large extent a unified perspective is possible. In the same spirit, an analogue of Theorem \ref{t:maximalspace} holds for $U_1$; using ideas from the present paper, a sketch of the proof of this result can be found in \cite{BKS_RIMS}. 

On the other hand, Carleson's problem for $U_1$ is significantly different to the problem for $U$. Whereas $\frac{d}{2(d + 1)}$ has been identified as the sharp regularity threshold for \eqref{e:Carleson}, it is known (and can be proved using significantly easier arguments) that $\frac{1}{2}$ is the sharp regularity threshold for $U_1$ in all dimensions (see \cite{Cowling, Wl97}). In the present paper, we do not attempt to consider the analogue of \eqref{e:aepointwise} for $U_1$.
\end{remark}

\subsection{Maximal-in-time estimates and pointwise convergence} \label{subsection:maximalintime}

Consider the divergence set 
\[
\mathfrak{D}(\gamma_0)
:=
\{x\in \mathbb R^d:\lim_{t\to 0}\rho_{\gamma(t)}(x)\not=\rho_{\gamma_0}(x)\}
\]
associated with the initial data $\gamma_0$, and where $\gamma(t)$ is the solution of \eqref{e:fractionalvN-S}. For ease of notation, we are suppressing the dependence on $d$ and $m$. Any upper bound on the Hausdorff dimension of $\mathfrak{D}(\gamma_0)$ which is strictly less than $d$ would certainly be a stronger statement than the almost everywhere pointwise convergence of $\rho_{\gamma(t)}$ to $\rho_{\gamma_0}$ and, in Corollary \ref{c:div set} below, we provide upper bounds of this nature. By Frostman's lemma from geometric measure theory, it will suffice to establish local maximal-in-time estimates where integration in the spatial variable is taken with respect to an arbitrary $\alpha$-dimensional measure. Here, for a given $\alpha\in(0,d]$, the Borel measure $\mu$ on $\R^d$ is said to be $\alpha$-dimensional if 
\[
\sup_{x\in\R^d, r>0}\frac{\mu(B(x,r))}{r^\alpha}<\infty,
\]
and we shall use $\mathcal M^\alpha(\mathbb B^d)$ to denote the collection of all $\alpha$-dimensional probability measures supported on
the unit ball $\mathbb B^d$. Our maximal-in-time estimates appear in the forthcoming Theorem \ref{t:maximal main} and take the form
\begin{equation}\label{maximal}
            \bigg\|\sum_j\lambda_j|U_m f_j|^2\bigg\|_{L_x^1(\mathbb B^d,\d\mu)L_t^\infty(0,1)}
            \lesssim
            \|\lambda\|_{\ell^\beta}.
        \end{equation}
Together with Corollary \ref{c:div set}, these results provide a significant extension and refinement of Theorem \ref{t:pointwiseBLN}.
\begin{theorem}\label{t:maximal main}
    Let $d\geq1$, $s \in [0,\frac{d}{2})$, $\alpha \in (0,d]$, $\beta \geq 1$, and $\mu\in \mathcal M^\alpha(\mathbb B^d)$.
    \begin{enumerate}[(i)]
        \item
        \label{item:t:maximal m larger 1}Let $m\in(1,\infty)$ and $s \geq \frac{d}{4}$. The maximal estimate \eqref{maximal} holds for all orthonormal systems $(f_j)_j$ in $\dot{H}^s({\mathbb R}^d)$ whenever 
        \[
        s > 
        \frac12\left(d-\frac{\alpha}{\beta}\right).
        \]
        This is sharp in the sense that if $s < \frac12(d-\frac{\alpha}{\beta})$, then there exist $\mu\in \mathcal M^\alpha(\mathbb{B}^d)$, an orthonormal system $(f_j)_j$ in $\dot{H}^s({\mathbb R}^d)$, and a sequence $(\lambda_j)_j \in \ell^\beta$ such that \eqref{maximal} fails.
        \item
        \label{item:t:maximal m less 1} Let $m\in (0,1)$. The maximal estimate
        \eqref{maximal} 
        holds for all orthonormal systems $(f_j)_j$ in $\dot{H}^{s}({\mathbb R}^d)$ whenever 
        \[
        s > \max\left\{
        \frac12\left(d-\frac{\alpha}{\beta}\right),
        \frac d4-\frac12(1-m)\left(\frac\alpha\beta-\frac d2\right) 
        \right\}.
        \]
        When $d=1$, this is sharp in the sense that if
        $\max\left\{\frac12(1-\frac{\alpha}{\beta}), \frac14-\frac12(1-m)(\frac\alpha\beta-\frac12)\right\}>s$, then there exist $\mu\in \mathcal M^\alpha(\mathbb{B})$, an orthonormal system $(f_j)_j$ in $\dot{H}^s({\mathbb R})$, and a sequence $(\lambda_j)_j\in \ell^\beta$  such that \eqref{maximal} fails.
    \end{enumerate}
\end{theorem}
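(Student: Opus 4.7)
The plan is to reduce \eqref{maximal} to a Schatten-class bound via linearisation and Frank--Sabin duality, and to then establish the Schatten bound by combining stationary-phase kernel estimates with a dyadic summation of frequency-localised pieces.

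First, choose (up to $\varepsilon$) a measurable $t: \mathbb B^d \to (0,1)$ realising the $L^\infty_t$ supremum inside \eqref{maximal}. Writing $g_j = |D|^s f_j$ so that $(g_j)_j$ is orthonormal in $L^2(\R^d)$, and introducing the operator
\[
S g(x) := e^{it(x)(-\Delta)^{m/2}}|D|^{-s}g(x), \qquad S: L^2(\R^d) \to L^2(\mathbb B^d, d\mu),
\]
the left-hand side of \eqref{maximal} equals $\sum_j \lambda_j \|Sg_j\|_{L^2(d\mu)}^2$. The standard duality for orthonormal inequalities then reduces \eqref{maximal}, uniformly in $t(\cdot)$, to the Schatten-class bound $\|SS^*\|_{\C^{\beta'}(L^2(d\mu))} \lesssim 1$ with $\beta' = \beta/(\beta-1)$, whose integral kernel is
\[
K(x,y) = c_d \int_{\R^d} e^{i(x-y)\cdot\xi + i(t(x)-t(y))|\xi|^m} |\xi|^{-2s}\, d\xi.
\]

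Next, I would perform a Littlewood--Paley decomposition $K = \sum_N K_N$ in frequency $|\xi| \sim N$ and analyse each $K_N$ by stationary phase. Rescaling $\xi = N\eta$ produces an oscillatory integral whose phase has a non-degenerate Hessian on the support of the cutoff for every $m \neq 1$. For $m \in (1,\infty)$ this yields the standard convex dispersive bound $|K_N(x,y)| \lesssim N^{d-2s}(1 + N|x-y|)^{-M}$ together with additional decay in the time separation, whereas for $m \in (0,1)$ the concavity of $|\xi|^m$ forces the more delicate stationary-phase analysis developed by Walther~\cite{Wl97} and refined in~\cite{CS22}; it is precisely this concave regime that produces the second threshold in~(ii).

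Third, I would estimate $\|S_N S_N^*\|_{\C^{\beta'}}$ at the two natural endpoints and then interpolate. At $\beta=1$ the Schatten bound collapses to the single-particle maximal inequality for $U_m$ against $\alpha$-dimensional measures --- classical for $m>1$ (Sj\"olin, Cho--Ko) and of Walther type for $m<1$ --- giving the threshold $\tfrac12(d-\alpha)$ in (i) and the $m$-dependent Cho--Shiraki threshold in (ii). At $\beta=2$ the Hilbert--Schmidt norm reads
\[
\iint |K_N(x,y)|^2\, d\mu(x) d\mu(y) \lesssim N^{2(d-2s)} \int \mu(B(x,N^{-1}))\, d\mu(x) \lesssim N^{2(d-2s)-\alpha}
\]
by the $\alpha$-Frostman condition on $\mu$, and this is dyadically summable in $N$ exactly when $s>\tfrac12(d-\alpha/2)$. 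Complex interpolation then returns \eqref{maximal} for $\beta\in[1,2]$; for $\beta\in(2,\beta_c)$ with $\beta_c := \alpha/(d-2s)$, I would invoke Bourgain's dyadic summation trick to combine the frequency-localised bounds $\|S_NS_N^*\|_{\C^{\beta'}} \lesssim N^{d-2s-\alpha/\beta'}$ into a restricted weak-type estimate at $\beta_c$, and then promote it to a strong-type inequality in every subcritical range by Lorentz-scale interpolation in the spirit of~\cite{BHLNS}. The sharpness claims are established by taking $\mu$ a Frostman measure on a Cantor-type $\alpha$-regular set and $(f_j)_j$ a Knapp-style system of frequency-localised wave packets whose propagated densities concentrate on thin tubes meeting $\supp\mu$ efficiently, with the $\ell^\beta$ coefficients calibrated to force \eqref{maximal} to fail just below the claimed threshold.

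The main obstacle I anticipate is the passage to $\beta>2$ in case (ii): the concave stationary-phase bound must be calibrated against the Frostman decay of $\mu$ with a precision sensitive to both $m$ and $\alpha$, and then the Bourgain restricted weak-type estimate must be promoted to a strong-type inequality in the mixed norm $L^1_x(d\mu) L^\infty_t$, which affords no room to interpolate within the spatial Lorentz scale; this promotion is the technical heart of the argument.
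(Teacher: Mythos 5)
The overall skeleton you propose --- duality to a Schatten-class bound, Littlewood--Paley decomposition, stationary-phase kernel estimates, interpolation --- is essentially the paper's strategy, and your observation that one may first linearise the $L^\infty_t$ norm by choosing a measurable $t(\cdot)$ is a valid simplification (the paper instead carries a weight $W(t,x)\in L^\infty(\d\mu)L^2_t$ throughout and dualises against that, which is not substantively different once you fold the $L^2_t$ integration into the kernel). The $\C^2$ computation you write down is also correct in spirit. However, there are concrete gaps.

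First and most seriously, your plan has no mechanism to reach $\beta>2$. On the dual side $\beta>2$ corresponds to $\beta'<2$, so you need a Schatten bound in $\C^{\beta'}$ for $\beta'$ between $1$ and $2$. The two endpoints you propose --- the operator-norm (i.e.\ $\C^\infty$, your ``$\beta=1$'') bound from the single-particle maximal inequality and the $\C^2$ (Hilbert--Schmidt) bound --- interpolate only to $\beta'\in[2,\infty]$, i.e.\ $\beta\in[1,2]$. Invoking Bourgain's dyadic summation trick does not repair this: that lemma is a device for summing marginal dyadic pieces into a restricted weak-type estimate at a single exponent, not for extending a Schatten interval downward. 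What the paper does instead is prove a \emph{trace-class} bound on the frequency-localised pieces,
\[
\|W T_k \overline{W}\|_{\C^1} \lesssim 2^{(d-2s)k}\|W\|_{L^2(\d\mu)L^2_t}^2,
\]
which is just Bessel's inequality in disguise (the kernel of $D^{-s}P_kU_m$ has $L^2_\xi$ norm $\sim 2^{(d/2-s)k}$ uniformly in $(t,x)$), then interpolates $\C^1$ with $\C^2$ by H\"older's inequality on Schatten norms. Since $\beta<\beta_c=\alpha/(d-2s)$ is strict, the resulting dyadic sum in $k$ decays geometrically and no Bourgain-type device is needed at all. You should replace the ``Bourgain $+$ Lorentz promotion'' step with this elementary $\C^1$ input.

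Second, your kernel decomposition is only in frequency. The paper additionally decomposes the physical separation $|x-x'|\sim 2^{-l}$ and proves the $\C^2$ and $\C^\infty$ estimates on the pieces $T_{k,l}$; the $\alpha$-Frostman property of $\mu$ enters through an elementary Cauchy--Schwarz estimate (Lemma~\ref{l:Young} in the paper) on each annulus $|x-x'|\sim 2^{-l}$, and the gain against $\mu$ is then $2^{-\alpha l/\beta}$ per piece. Working only at a single spatial scale, as your $\int\mu(B(x,N^{-1}))\,\d\mu(x)$ estimate suggests, can be made to give the $\C^2$ bound, but the double decomposition is what makes the Schatten interpolation and the summation transparent, and is essential for the $m\in(0,1)$ case where the kernel decay is anisotropic in the two scales (the $(1+2^{k-l/(1-m)})^{-d/2}$ factor from \eqref{e:Walther}).

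Third, the endpoint $s=\frac{d}{4}$ in part~(i) is not covered by your argument. At $s=\frac{d}{4}$ the dyadic sum $\sum_k 2^{(d-2s-\min\{\alpha/\beta,\,d/2\})k}$ is exactly marginal (the $d/2$ arising from the kernel decay), so a frequency-localised approach fails by a logarithm. The paper handles this by using an unlocalised kernel estimate (Lemma~\ref{l:disp space}, valid for $m>1$) that bounds the full kernel $\int e^{i(x\cdot\xi+t|\xi|^m)}|\xi|^{-d/2}\varphi_{\leq N}(|\xi|)\,\d\xi$ uniformly in the truncation $N$, so that no frequency summation is needed at all. You should flag this endpoint as requiring a distinct argument.

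The sharpness sketch is in the right spirit and matches the paper's construction (a single-scale union of intervals with Frostman normalisation, plus modulated Knapp wave packets, with the key point in case~(ii) being the choice $t_j(x)=-m^{-1}N^{1-m}(x-2N^{-\alpha(1-m)}j)$ that cancels the linear term in the phase).
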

Taking $\mu$ as standard Lebesgue measure (and $\alpha = d$), one may deduce the pointwise convergence
\[
\lim_{t\to0} \rho_{\gamma(t)}(x)
=
\rho_{\gamma_0}(x)\quad \ae
\]
for self-adjoint initial data $\gamma_0\in \C^{\beta,s}$, with the appropriate $(\beta,s)$ given in the above theorem. In particular, we note that the range of $s$ may drop below $\frac{d}{4}$ in the case $m \in (0,1)$. We also note that the full strength of Theorem \ref{t:maximal main} yields geometric size information on the corresponding divergence set as follows. We illustrate this in Figure \ref{f:divset} in the case $d = 1$.

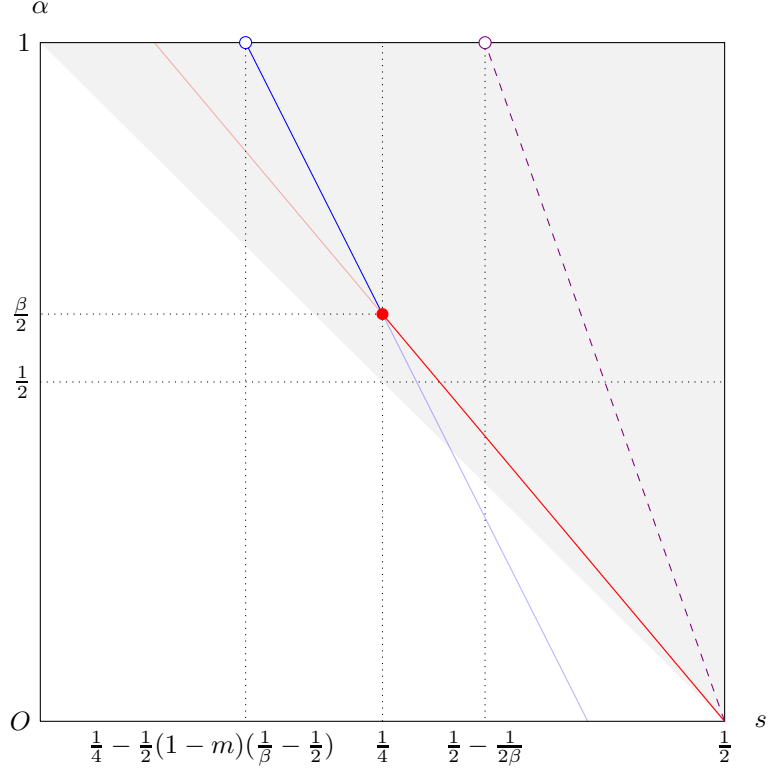
\begin{figure}[t]
\begin{center}
\begin{tikzpicture}[scale=9]
\coordinate (O) at (0,0) node [left] {$O$};
\fill [gray,opacity=0.1] (1,0)--(1,1)--(0,1);
\draw (0,0)--(1,0)--(1,1)--(0,1)--(0,0);
\node [right] at (1.03,0) {$s$};
\node [above] at (0,1.03) {$\alpha$};

\draw [dotted](1/2,0)--(1/2,1);
\draw [dotted](0,1/2)--(1,1/2);

\draw [red] (1,0)--(1/2,1/2+0.1);
\draw [blue] (1/2,1/2+0.1)--(1/2-0.2,1);
\draw [red, opacity=0.3] (1,0)--(1/6,1);
\draw [blue,opacity=0.3] (1/2,1/2+0.1)--(1/2+0.3,0);

\draw [dotted] (1/2,1/2+0.1)--(0,1/2+0.1) node [left] {$\frac\beta2$};
\draw [dotted] (1/2-0.2,1)--(1/2-0.2,0);
\node [below] at (1/2-0.25,0) {$\frac14-\frac12(1-m)(\frac1\beta-\frac12)$};
\node [below] at (1/2,0) {$\frac14$};
\node [below] at (1,0) {$\frac12$};
\node [left] at (0,1/2) {$\frac12$};
\node [left] at (0,1) {$1$};

\fill [white] (1/2-0.2,1) circle (0.25pt);
\draw [blue] (1/2-0.2,1) circle (0.25pt);
\fill [red] (1/2,1/2+0.1) circle (0.25pt);

\coordinate (B) at (0.65,1);
\draw [dotted] (B)--(B|-O);
\draw [violet,dashed] (1,0)--(B);
\fill [white] (B) circle (0.25pt);
\draw [violet] (B) circle (0.25pt);

\node [below] at (B|-O) {$\frac12-\frac{1}{2\beta}$};






\end{tikzpicture}
\caption{The graph illustrates our upper bound of the Hausdorff dimension of the divergence sets when $d=1$. For $m\in(1,\infty)$, it is given by \textcolor{red}{$\alpha = (1-2s)\beta$}. This is extended to \textcolor{blue}{$\alpha = \frac{1-2s-\frac m2}{1-m}\beta$} up to $s=\frac14-\frac12(1-m)(\frac1\beta-\frac12)$ when $m\in(0,1)$. For $\beta\in[2,\infty)$, the situation changes and $\alpha = (1-2s)\beta$ is the \textcolor{violet}{dashed line}.}\label{f:divset}
\end{center}

\end{figure}

\begin{corollary}\label{c:div set}
Let $d \geq1$, $s \in [0,\frac{d}{2})$. Suppose $\gamma_0 \in \C^{\beta,s}$ is self-adjoint with $1 \leq \beta<\frac{d}{d-2s}$. 
\begin{enumerate}[(i)]
\item
For $m\in(1,\infty)$ and $s \geq \frac d4$, the pointwise convergence \eqref{e:aepointwise} holds and furthermore we have
\[
\dim_H\mathfrak D(\gamma_0)
\leq
(d-2s)\beta.
\]
\item \label{item:divset m less 1}
For $m\in(0,1)$ and $s > \frac d4-\frac d2(1-m)(\frac 1\beta-\frac12)$, the pointwise convergence \eqref{e:aepointwise} holds and furthermore we have
\[
\dim_{H}\mathfrak D(\gamma_0)
\leq
\max\left\{
(d-2s)\beta,
\frac{2(d-2s)-md}{2(1-m)}\beta
\right\}.
\]
\end{enumerate} 
\end{corollary}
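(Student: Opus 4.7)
\textbf{Setup via spectral decomposition.} Since $\gamma_0$ is self-adjoint with $\gamma_0 \in \C^{\beta,s}$, the operator $|D|^s \gamma_0 |D|^s$ is self-adjoint and belongs to the standard Schatten class $\C^\beta$ over $L^2(\R^d)$. The spectral theorem produces an eigenbasis $(g_j)_j$, orthonormal in $L^2(\R^d)$, with real eigenvalues $(\lambda_j)_j \in \ell^\beta$. Setting $f_j := |D|^{-s} g_j$ yields an orthonormal system in $\dot{H}^s(\R^d)$ such that $\gamma_0 = \sum_j \lambda_j |f_j\rangle\langle f_j|$, and hence
\[
\rho_{\gamma(t)}(x) - \rho_{\gamma_0}(x) = \sum_{j} \lambda_j \bigl( |U_m f_j(t,x)|^2 - |f_j(x)|^2 \bigr).
\]
This is the quantity to control as $t \to 0$.

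\textbf{Reduction via Frostman's lemma.} Let $\alpha_\star$ denote the upper bound on $\dim_H \mathfrak{D}(\gamma_0)$ claimed in (i) or (ii). To show $\dim_H \mathfrak{D}(\gamma_0) \leq \alpha_\star$, it suffices by Frostman's lemma to prove that for every $\alpha \in (\alpha_\star, d]$ and every $\mu \in \mathcal{M}^\alpha(\mathbb{B}^d)$, we have $\mu(\mathfrak{D}(\gamma_0) \cap \mathbb{B}^d) = 0$; by translation and countable subadditivity, this transfers to arbitrary balls. The specific choice of $\alpha_\star$ is dictated by Theorem~\ref{t:maximal main}: in case (i), the inequality $\alpha > (d-2s)\beta$ is equivalent to $s > \frac{1}{2}(d - \alpha/\beta)$; in case (ii), a direct manipulation shows that $\alpha$ exceeding the claimed maximum is equivalent to the two conditions on $s$ appearing in Theorem~\ref{t:maximal main}\eqref{item:t:maximal m less 1}. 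Thus for any such $\alpha$, the maximal estimate \eqref{maximal} is applicable. Taking $\mu$ as normalized Lebesgue measure (so $\alpha = d$), the hypothesis $\beta < \frac{d}{d-2s}$ is exactly the admissibility condition, which yields the almost everywhere convergence statement in parallel.

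\textbf{Standard approximation argument.} Fix $\alpha$ and $\mu$ as above, and $\varepsilon > 0$. Introduce the sublinear maximal operator
\[
M\gamma_0(x) := \sup_{t \in (0,1)} \bigl| \rho_{\gamma(t)}(x) - \rho_{\gamma_0}(x) \bigr|.
\]
Applying Theorem~\ref{t:maximal main} once to $\rho_{\gamma(t)}$ and once to the constant-in-$t$ function $\rho_{\gamma_0}$ (which corresponds formally to taking $t = 0$), the triangle inequality yields
\[
\| M\gamma_0 \|_{L^1_x(\mathbb{B}^d, d\mu)} \lesssim \|\lambda\|_{\ell^\beta} = \|\gamma_0\|_{\C^{\beta,s}}.
\]
Let $\mathcal{D} \subset \C^{\beta,s}$ be the subspace of finite-rank, self-adjoint operators whose eigenfunctions $f_j$ lie in the Schwartz class. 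For $\widetilde{\gamma}_0 \in \mathcal{D}$, $\rho_{\widetilde{\gamma}(t)}(x) \to \rho_{\widetilde{\gamma}_0}(x)$ uniformly in $x \in \mathbb{B}^d$ as $t \to 0$, because each $U_m f_j$ is continuous in $t$ with values in $C_b(\mathbb{B}^d)$ and the sum is finite. Hence $M\widetilde{\gamma}_0 \equiv 0$ on $\mathbb{B}^d$. Approximating $\gamma_0$ by $\widetilde{\gamma}_0 \in \mathcal{D}$ with $\|\gamma_0 - \widetilde{\gamma}_0\|_{\C^{\beta,s}} < \eta$ (such approximations exist by truncating the spectral series and mollifying each $f_j$ in $\dot{H}^s$), sublinearity of $M$ and Chebyshev's inequality give
\[
\mu\bigl\{ x \in \mathbb{B}^d : M\gamma_0(x) > \varepsilon \bigr\} \leq \varepsilon^{-1} \| M(\gamma_0 - \widetilde{\gamma}_0) \|_{L^1(\mathbb{B}^d,d\mu)} \lesssim \varepsilon^{-1}\eta.
\]
Letting $\eta \to 0$ and then $\varepsilon \to 0$ through a countable sequence shows $M\gamma_0 = 0$ $\mu$-almost everywhere on $\mathbb{B}^d$, which is the desired conclusion $\mu(\mathfrak{D}(\gamma_0) \cap \mathbb{B}^d) = 0$.

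\textbf{Expected main obstacle.} The conceptual content of the corollary is the implication \emph{maximal estimate $\Rightarrow$ pointwise convergence plus Frostman dimension bound}, which is by now a well-established template. The only genuinely delicate point is the density of $\mathcal{D}$ in $\C^{\beta,s}$ together with the $\mu$-a.e.\ well-definedness of $\rho_{\gamma_0}(x)$ for a measure $\mu$ that may be singular with respect to Lebesgue; this second issue is in fact resolved automatically by the maximal estimate itself, which forces $\rho_{\gamma_0}$ to be finite $\mu$-a.e.\ as the limit of smooth approximations in $L^1(\mathbb{B}^d, d\mu)$. Once these points are taken care of, Theorem~\ref{t:maximal main} feeds directly into the Frostman framework and delivers both the pointwise convergence and the Hausdorff dimension estimates in (i) and (ii).
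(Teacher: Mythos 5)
The argument follows the same well-established template as the paper (Frostman reduction plus maximal-estimate-driven approximation), but as written it contains a concrete error. You define
\[
M\gamma_0(x) := \sup_{t \in (0,1)} \bigl| \rho_{\gamma(t)}(x) - \rho_{\gamma_0}(x) \bigr|
\]
and then claim $M\widetilde{\gamma}_0 \equiv 0$ for smooth finite-rank $\widetilde{\gamma}_0$. This is false: the supremum over \emph{all} $t \in (0,1)$ of the oscillation of a genuinely time-dependent density does not vanish just because the density is continuous at $t=0$; only $\limsup_{t\to 0}$ does. Consequently your final conclusion ``$M\gamma_0 = 0$ $\mu$-a.e.'' is wrong — it would say the density is constant in $t$, which it is not. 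The fix is to carry out the last step with the quantity $\limsup_{t\to0}|\rho_{\gamma(t)}(x)-\rho_{\gamma_0}(x)|$, which is dominated by $M(\gamma_0 - \widetilde{\gamma}_0)(x)$ plus $\limsup_{t\to0}|\rho_{\widetilde{\gamma}(t)}(x)-\rho_{\widetilde{\gamma}_0}(x)| = 0$ for smooth $\widetilde{\gamma}_0$. (The paper instead sets up a standard ``three-$\varepsilon$'' decomposition of $\mu\{x : \limsup_{t\to 0}|\rho_{\gamma(t)} - \rho_{\gamma_0}| > k^{-1}\}$, controlling the finite-rank approximation term, the tail, and the term vanishing identically for finite rank; it also does not need Schwartz eigenfunctions, since finite-rank suffices together with the single-function Carleson result \eqref{e:singlefunctionmaximal}.)

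Two further points to tighten. First, your bound $\|M\gamma_0\|_{L^1(\d\mu)}\lesssim\|\lambda\|_{\ell^\beta}$ silently uses the estimate $\|\rho_{\gamma_0}\|_{L^1(\d\mu)}\lesssim\|\lambda\|_{\ell^\beta}$ — this is not literally the $t=0$ case of \eqref{maximal} (the supremum there is over an open interval), and in the paper it is proved separately as \eqref{est:DensityWell-defined} in the preamble to Section \ref{s:PW}; it needs to be invoked explicitly. Second, if you insist on Schwartz eigenfunctions in $\mathcal{D}$, note that mollifying each $f_j$ destroys orthonormality, so one must re-diagonalize the perturbed operator and verify the approximation is still small in $\C^{\beta,s}$; the paper avoids this entirely by only truncating the spectral series (finite-rank approximation) and handling the finite-rank convergence via the classical Carleson-type result for a single $f\in\dot H^s$.
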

We reiterate that, even when $\beta = 1$, Corollary \ref{c:div set}(i) appears to be new for $m \neq 2$ when $d \geq 2$. Also, the case $\beta = 1$ in Corollary \ref{c:div set}(ii) corresponds to \cite[Theorem 1]{CS22}.

In light of the necessary condition in Theorem \ref{t:maximal main}(i), it seems reasonable to believe that the upper bound in Corollary \ref{c:div set}(i) is best possible for any $d \geq 1$. Similarly, we expect the upper bound in Corollary \ref{c:div set}(ii) to be sharp when $d = 1$, but the sharp threshold on the size of the divergence sets when $m \in (0,1)$ and $d \geq 2$ is less clear.

\section{The critical case and outline of the proof of Theorem \ref{t:maximalspace}}\label{s:Motivation Outline}
The proof of Theorem \ref{t:maximalspace} in full appears in Sections \ref{s:Upper half} and \ref{s:lower half}, but later in this section we include an overview of the main steps. Prior to that, it will be informative to briefly sketch an argument which yields restricted weak-type estimates at the critical exponent $\beta = \frac{q}{2}$. The corresponding estimates for the one-sided wave propagator $U_1$ were carefully proved in \cite{BKS_RIMS} and so we refer the reader there for further details.

 \subsection{Restricted weak-type estimates at $\beta = \frac{q}{2}$} \label{subsection:motivation}
For $d\ge2$, we claim
\begin{equation}\label{i:weak-restricted}
\bigg\|\sum_j\lambda_j|Uf_j|^2\bigg\|_{L_t^{\frac q2,\infty}L_x^\infty}
\lesssim 
\|\lambda\|_{\ell^{\frac q2,1}}
\end{equation}
holds for all orthonormal systems $(f_j)_j$ in $\dot{H}^s(\mathbb R^d)$, $s = \frac{d}{2} - \frac{2}{q}$, and all sequences $(\lambda_j)_j$ in $\ell^{\frac q2,1}$, where the range of $q$ is the same as in Theorem \ref{t:maximalspace}. 

First consider $d \geq 3$. Our argument is based on the fact that if $q$ coincides either with $2$ or $\infty$, then the frequency-localized estimate
\begin{equation} \label{e:ONSqrfreqlocal}
\bigg\| \sum_j \lambda_j |UP_kf_j|^2 \bigg\|_{L^{\frac{q}{2}}_tL^\infty_x} \lesssim 2^{(\frac{d}{2} - \frac{2}{q})k}\| \lambda \|_{\ell^\frac{q}{2}}
\end{equation}
holds for all orthonormal systems $(f_j)_j$ in $L^2(\mathbb R^d)$. Here, $k$ is an integer and the operator $P_k$ denotes the Littlewood--Paley type frequency localization given by $\widehat{P_k f}(\xi)=\varphi_k(|\xi|)\widehat{f}(\xi)$, where $\varphi_k(r)=\varphi(2^{-k}r)$ with $\varphi\in C_0^\infty$ supported in $\{r \in\mathbb R:2^{-1}< r <2\}$. The estimate \eqref{e:ONSqrfreqlocal} with $q = 2$ follows from the frequency-localized Strichartz estimate
\begin{equation} \label{e:classicaltopleft}
\|UP_kf\|_{L^2_tL^\infty_x(\mathbb{R} \times \mathbb{R}^d)} \lesssim 2^{\frac{d-2}{2}k} \|f\|_{L^2(\mathbb{R}^d)}
\end{equation}
and the triangle inequality, and \eqref{e:ONSqrfreqlocal} with $q = \infty$ follows from Bessel's inequality. The latter claim is based on the following simple observation from \cite{BHLNS}:
\[
\sum_j |UP_kf_j(t,x)|^2 =  \sum_j |\langle \widehat{f_j}(\xi), e^{-ix \cdot \xi} m_t(\xi) \rangle_{L^2_\xi}|^2 \leq \|m_t\|_2^2 \lesssim 2^{kd}. 
\]
Here, $m_t$ is the Fourier multiplier  associated with $UP_kf(t,\cdot)$ and we have used the fact that, thanks to Parseval's identity, $(\widehat{f_j})_j$ is orthonormal in $L^2(\mathbb{R}^d)$.

Finally, in order to sum up the frequency-localized estimates and pass to \eqref{i:weak-restricted}, we use the following. 
\begin{lemma}[\cite{BHLNS, BKS_RIMS}]\label{l:Bourgain's trick}
Let $q_0, q_1,r \in [2,\infty]$, $\beta_0, \beta_1 \in [2,\infty]$ and $(g_j)_j$ be a uniformly bounded sequence in $L_t^{q_0}L_x^{r}\cap L_t^{q_1}L_x^{r}$. Suppose there exist $\varepsilon_0$, $\varepsilon_1>0$ such that 
\[
\bigg\|\sum_j\lambda_j|P_kg_j|^2\bigg\|_{L_t^{\frac{q_i}{2},\infty}L_x^{\frac{r}{2}}}\lesssim 2^{(-1)^{i+1}\varepsilon_ik}\|\lambda\|_{\ell^{\beta_i}}
\]
for all $k\in\mathbb{Z}$ and $i=0,1$, then
\[
\bigg\|\sum_j\lambda_j|g_j|^2\bigg\|_{L_t^{\frac{q}{2},\infty}L_x^{\frac{r}{2}}}\lesssim \|\lambda\|_{\ell^{\beta,1}},
\]
where $\frac{1}{q}=\frac{1-\theta}{q_0}+\frac{\theta}{q_1}$, $\frac{1}{\beta}=\frac{1-\theta}{\beta_0}+\frac{\theta}{\beta_1}$ and $\theta=\frac{\varepsilon_0}{\varepsilon_0+\varepsilon_1}$.
\end{lemma}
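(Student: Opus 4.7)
The plan is to implement Bourgain's dyadic summation trick in the orthonormal-systems framework developed in \cite{BHLNS, BKS_RIMS}. The argument proceeds in three steps: reduction to frequency-diagonal contributions, the Bourgain splitting at a balanced dyadic threshold $K$, and a concluding real-interpolation step that converts the resulting restricted weak-type estimate into the claimed $\ell^{\beta,1}$-input bound.

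To begin, I expand
\[
|g_j|^2 = \sum_k|P_kg_j|^2 + 2\operatorname{Re}\sum_{k<l}P_kg_j\,\overline{P_lg_j},
\]
and observe that a pointwise Cauchy--Schwarz in the index $j$ applied to the cross terms (we may assume $\lambda_j\geq 0$ by splitting $\lambda$ into signed parts), combined with H\"older's inequality in the spatial variable (using $r/2\geq 1$), reduces the problem to controlling $\sum_k\sum_j\lambda_j|P_kg_j|^2$ in $L_t^{q/2,\infty}L_x^{r/2}$. Testing on indicator inputs $\lambda=\chi_J$ with $|J|=N$, the two given hypotheses together with the quasi-triangle inequality in weak Lorentz spaces and the geometric nature of each resulting sum yield, for any threshold $K\in\mathbb{Z}$,
\[
\bigg\|\sum_{k>K}\sum_{j\in J}|P_kg_j|^2\bigg\|_{L_t^{q_0/2,\infty}L_x^{r/2}}\lesssim 2^{-\varepsilon_0 K}N^{1/\beta_0},\qquad
\bigg\|\sum_{k\leq K}\sum_{j\in J}|P_kg_j|^2\bigg\|_{L_t^{q_1/2,\infty}L_x^{r/2}}\lesssim 2^{\varepsilon_1 K}N^{1/\beta_1}.
\]
This decomposition bounds the $K$-functional of the full sum, with respect to the pair $(L_t^{q_0/2,\infty}L_x^{r/2},L_t^{q_1/2,\infty}L_x^{r/2})$, by $2^{-\varepsilon_0 K}N^{1/\beta_0}+t\,2^{\varepsilon_1 K}N^{1/\beta_1}$. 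Optimizing over $K$ yields $K(t;\cdot)\lesssim t^{\theta}N^{1/\beta}$ with $\theta=\varepsilon_0/(\varepsilon_0+\varepsilon_1)$ and $\frac{1}{\beta}=\frac{1-\theta}{\beta_0}+\frac{\theta}{\beta_1}$; the identification $(L_t^{q_0/2,\infty}L_x^{r/2},L_t^{q_1/2,\infty}L_x^{r/2})_{\theta,\infty}=L_t^{q/2,\infty}L_x^{r/2}$ from real interpolation of mixed weak Lorentz spaces then delivers the restricted weak-type bound $\|\sum_{j\in J}|g_j|^2\|_{L_t^{q/2,\infty}L_x^{r/2}}\lesssim N^{1/\beta}$.

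The stated strong-type inequality with $\ell^{\beta,1}$-input follows by another application of real interpolation, through the reiteration identity $(\ell^{\beta_0,1},\ell^{\beta_1,1})_{\theta,1}=\ell^{\beta,1}$; equivalently, one writes any $\lambda\in\ell^{\beta,1}$ as an absolutely summable atomic combination of indicator sequences whose weighted restricted weak-type contributions are controlled by the Lorentz norm. The main technical challenge I anticipate lies in justifying that the off-diagonal Littlewood--Paley cross terms do not spoil the telescoping structure required for the Bourgain splitting, and in tracking the quasi-triangle constants in the weak Lorentz spaces across the frequency sum; the remedy in both cases is to work throughout at the level of the $K$-functional and on indicator inputs, rather than naively summing quasi-norms, as in the argument of \cite{BHLNS}.
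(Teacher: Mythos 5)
The paper does not supply a proof of this lemma; it is cited from \cite{BHLNS, BKS_RIMS}, so I cannot compare against the authors' own argument. Your overall strategy — reduce to $\lambda = \chi_J$, perform the Bourgain dyadic split at a balanced threshold $K$, phrase the result as a $K$-functional bound and invoke real interpolation, then upgrade the input from indicator sequences to $\ell^{\beta,1}$ by reiteration — is indeed the standard implementation of Bourgain's trick in the orthonormal-systems framework and is essentially the right route.

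There is, however, a genuine slip in the cross-term reduction, and it matters because it changes the quantity to which you apply the dyadic splitting. Writing $\rho_k := \sum_j \lambda_j |P_k g_j|^2$ and $b_k(t) := \|\rho_k(t,\cdot)\|_{L_x^{r/2}}$, the Cauchy--Schwarz in $j$ (with $\lambda_j \geq 0$) and H\"older in $x$ give the pointwise bound $\rho \leq \bigl(\sum_k \rho_k^{1/2}\bigr)^2$, and hence
\[
\|\rho(t,\cdot)\|_{L_x^{r/2}} \leq \Bigl(\sum_k b_k(t)^{1/2}\Bigr)^2,
\]
\emph{not} a bound in terms of $\bigl\|\sum_k \rho_k(t,\cdot)\bigr\|_{L_x^{r/2}}$. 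These two quantities are genuinely different — in fact $\bigl(\sum_k \rho_k^{1/2}\bigr)^2 \geq \sum_k \rho_k$ pointwise — and the claim that the problem ``reduces to controlling $\sum_k \rho_k$'' is incorrect as stated. Note also that one cannot replace the Cauchy--Schwarz step with a vector-valued Littlewood--Paley comparison $\|\rho\|_{L_x^{r/2}} \lesssim \|\sum_k \rho_k\|_{L_x^{r/2}}$, since that is unavailable at the endpoints $r = 2$ and $r = \infty$, and $r = \infty$ is precisely the case the paper needs.

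The fix is to run the Bourgain splitting one level down: work with $h(t) := \|\rho(t,\cdot)\|_{L_x^{r/2}}^{1/2} \leq \sum_k b_k(t)^{1/2}$, split the sum $\sum_k b_k^{1/2}$ at $K$, and estimate each half in $L_t^{q_i,\infty}$ using $\|b_k^{1/2}\|_{L_t^{q_i,\infty}} = \|b_k\|_{L_t^{q_i/2,\infty}}^{1/2} \lesssim 2^{(-1)^{i+1}\varepsilon_i k/2} N^{1/(2\beta_i)}$ and geometric summation (the weak-type quasi-triangle constants are harmless for a geometric series). Squaring at the end recovers exactly the dyadic bounds $2^{-\varepsilon_0 K} N^{1/\beta_0}$ and $2^{\varepsilon_1 K} N^{1/\beta_1}$ that you wrote down — the square and square root cancel out in the numerology — so the rest of your argument (the $K$-functional optimization giving $t^\theta N^{1/\beta}$, the identification of $(L_t^{q_0/2,\infty}L_x^{r/2}, L_t^{q_1/2,\infty}L_x^{r/2})_{\theta,\infty}$, and the restricted-to-$\ell^{\beta,1}$ upgrade) goes through unchanged. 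So the final estimates are correct, but the intermediate reduction step needs to be reformulated as above rather than as a comparison with $\sum_k \rho_k$.
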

We are referring the reader to \cite{BHLNS, BKS_RIMS} for the above lemma since the statement can be found as it is written here in these papers, but the key underlying idea goes back to Bourgain \cite{Bourgaintrick}.

When $d = 2$ may be proved in a similar manner but the failure of \eqref{e:classicaltopleft} (and thus also \eqref{e:ONSqrfreqlocal} with $q = 2$) adds difficulty. Via Lemma \ref{l:Bourgain's trick} again, it actually suffices to establish \eqref{e:ONSqrfreqlocal} with $q$ strictly larger but arbitrarily close to $2$. One may reach such estimates by using duality (the forthcoming Lemma \ref{l:duality}) and estimates on certain Schatten norms using the Brascamp--Lieb inequality. We omit the details and refer the interested reader to the analogous argument for the half-wave propagator in \cite{BKS_RIMS}.

 \subsection{Outline of the proof of Theorem \ref{t:maximalspace}} \label{subsection:Outline}
It is unclear to us whether it is possible to upgrade \eqref{i:weak-restricted} to a strong-type estimate at the critical exponent $\beta = \frac{q}{2}$. In Theorem \ref{t:maximalspace} we obtain strong-type estimates for $\beta < \frac{q}{2}$ by dividing the range of $q$ into $2<q\leq4$ and $4 < q<\infty$ (of course, when $d=1$ this division is superfluous). In this overview of the proof of Theorem \ref{t:maximalspace}, for simplicity let us suppose $d \geq 2$.

As suggested by the above division, key to the argument is $q$ close to $4$. More precisely, the case $\beta = 2$ is important for our proof and we shall see that
\begin{equation}\label{i:St middle}
    \bigg\|\sum_j\lambda_j|U|D|^{-s}f_j|^2 \bigg\|_{L_t^{\frac q2,2}L_x^\infty}
    \lesssim
    \|\lambda\|_{\ell^2}
\end{equation}
for $4 < q < \infty$ (which is of most use to us when $q$ is close to $4$) and $s = \frac{d}{2} - \frac{2}{q}$. Here, $L_t^{\frac q2,2}$ is a Lorentz space. Since $q > 4$ these estimates are stronger than the strong-type counterpart with $L_t^{\frac q2}$, and this gain will be important to obtain Theorem \ref{t:maximalspace} when $4 < q < \infty$. 

We first consider $2<q\leq 4$. Here we shall see that \begin{equation}\label{i:St top}
    \bigg\|\sum_j\lambda_j|U|D|^{-s}f_j|^2 \bigg\|_{L_t^{\frac q2,1}L_x^\infty}
    \lesssim
    \|\lambda\|_{\ell^1}
\end{equation}
holds for any $q>2$ and $s = \frac{d}{2} - \frac{2}{q}$, and this is of most use to us when $q$ is close to $2$. We would like to use complex interpolation to deduce Theorem \ref{t:maximalspace}\footnote{In fact, a slightly stronger estimate involving certain Lorentz spaces} for $2 < q \leq 4$ from \eqref{i:St middle} and \eqref{i:St top} but the fact that $s$ varies with $q$ causes some technical difficulty. In order to carry out an analytic interpolation argument to take care of this, we found that it was more convenient to reformulate and work with dual estimates. Here, duality is in the sense of the forthcoming Lemma \ref{l:duality} and we describe the argument on the dual side in more detail in a moment.

Before that, let us explain the idea behind the proof for $4 < q < \infty$. We shall first obtain the weak-type estimate
\begin{equation}\label{i:St bottom}
    \bigg\|\sum_j\lambda_j|U|D|^{-s}f_j|^2 \bigg\|_{L_t^{\frac q2,\infty}L_x^\infty}
    \lesssim
    \|\lambda\|_{\C^{\beta}},
\end{equation}
for $4 < q < \infty$, $\beta<\frac{q}{2}$ (which is of most use to us when $q$ is extremely large) and $s = \frac{d}{2} - \frac{2}{q}$. Again one would like to interpolate this with \eqref{i:St middle} to obtain the desired estimates in Theorem \ref{t:maximalspace} for $4 < q < \infty$; in particular, it is key that the slight gain in the sense of the Lorentz space in \eqref{i:St middle} can be used to upgrade the weak-type estimate \eqref{i:St bottom} to a strong-type estimate.

In order to argue along the above lines, as we mentioned above, we found it more convenient to work on dual estimates using the following. 
\begin{lemma}[Duality principle \cite{FS_AJM}, \cite{BHLNS}]\label{l:duality}
Suppose $T$ is a bounded linear operator from $L^2(\mathbb{R}^d)$ to $L_t^{q,a} L_x^{r,b}$ for some $q,r,a,b \geq2$ and under the condition that $a = 2$ when $q = 2$. Also, let $\beta\geq1$.  Then, 
\[
\bigg\|\sum_j\lambda_j|Tf_j|^2 \bigg\|_{L_t^{\frac {q}{2},\frac {a}{2}}L_x^{\frac {r}{2},\frac{b}{2}}}\lesssim\|\lambda\|_{\ell^{\beta}}
\]
holds for all orthonormal systems $(f_j)_j$ in $L^2(\mathbb R^d)$ and all sequences $(\lambda_j)_j \in \ell^{\beta}(\mathbb C)$ if and only if 
\[
\|WTT^*\overline{W}\|_{\C^{\beta'}}
\lesssim 
\|W\|_{L_t^{\widetilde{q},\widetilde{a}}L_x^{\widetilde{r},\widetilde{b}}}^2
\]
for all $W\in L_t^{\widetilde{q},\widetilde{a}}L_x^{\widetilde{r},\widetilde{b}}$. Here, $\widetilde{\cdot}$ denotes the ``half conjugate" given by
\[
\frac{1}{p}+\frac{1}{\widetilde{p}}=\frac12
\]
and $\cdot'$ denotes the (usual H\"older) conjugate given by
\begin{align*}
\frac1\beta+\frac{1}{\beta'}=1.
\end{align*}
\end{lemma}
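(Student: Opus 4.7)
The plan is to derive both directions from a single trace identity combined with Schatten--Lorentz duality. Given an orthonormal system $(f_j)_j$ and scalars $(\lambda_j)_j$, set $\Lambda := \sum_j \lambda_j \Pi_{f_j}$ so that $\|\Lambda\|_{\C^\beta} = \|\lambda\|_{\ell^\beta}$. A direct expansion using $\Pi_{f_j}g = \langle g,f_j\rangle f_j$ yields the key identity
\[
\int W_1(t,x)\, \overline{W_2(t,x)} \sum_j \lambda_j |Tf_j(t,x)|^2 \, \d t\, \d x = \mathrm{tr}\bigl(\Lambda\, (M_{W_2}T)^* (M_{W_1}T)\bigr),
\]
where $M_{W_i}$ denotes multiplication by $W_i$; to see this, apply cyclicity to pass to $T\Lambda T^* M_{W_1 \overline{W_2}}$ and evaluate the trace via the diagonal of the kernel of $T\Lambda T^*$. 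Under the standing hypotheses $q,r,a,b\geq 2$ (with $a=2$ when $q=2$), each of the Lorentz spaces in play is Banach, and the left-hand side of the Strichartz-type estimate admits the associate-norm characterisation
\[
\biggl\|\sum_j\lambda_j|Tf_j|^2\biggr\|_{L^{q/2,a/2}_t L^{r/2,b/2}_x} \simeq \sup_V \biggl|\int V \sum_j \lambda_j |Tf_j|^2 \biggr|,
\]
with $V$ ranging over the unit ball of $X^* := L^{(q/2)',(a/2)'}_t L^{(r/2)',(b/2)'}_x$. The Lorentz-space scaling $\||W|^2\|_{L^{p/2,q/2}} = \|W\|_{L^{p,q}}^2$ combined with $2(p/2)' = \widetilde p$ then gives $\|W_i\|_Y^2 = \||W_i|^2\|_{X^*}$, where $Y := L^{\widetilde q,\widetilde a}_t L^{\widetilde r,\widetilde b}_x$.

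For the implication ``Strichartz $\Rightarrow$ Schatten'', I would fix $W$ and observe that $B := (M_WT)^*(M_WT) = T^*M_{|W|^2}T$ is positive semi-definite, so that
\[
\|B\|_{\C^{\beta'}} = \sup\bigl\{\mathrm{tr}(AB) : A \geq 0,\ \|A\|_{\C^\beta} \leq 1\bigr\}.
\]
Any such $A$ admits a spectral decomposition $A = \sum_j \lambda_j \Pi_{f_j}$ with $\lambda_j\geq 0$, $(f_j)_j$ orthonormal, and $\|\lambda\|_{\ell^\beta}\leq 1$; the key identity with $W_1 = W_2 = W$, followed by H\"older and the assumed Strichartz-type estimate, gives $\mathrm{tr}(AB) = \int|W|^2\sum_j\lambda_j|Tf_j|^2 \leq C\|W\|_Y^2\|\lambda\|_{\ell^\beta}$. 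The identity $\|S^*S\|_{\C^{\beta'}} = \|SS^*\|_{\C^{\beta'}}$ identifies $\|B\|_{\C^{\beta'}}$ with $\|WTT^*\overline W\|_{\C^{\beta'}}$, as required. For the converse, I would take general $V$ in the unit ball of $X^*$ and factor $V = W_1\overline{W_2}$ with $|W_1| = |W_2| = |V|^{1/2}$, so that $\|W_i\|_Y = \|V\|_{X^*}^{1/2}$; the key identity and Schatten trace duality give $|\int V \sum_j\lambda_j|Tf_j|^2| \leq \|\lambda\|_{\ell^\beta}\|(M_{W_2}T)^*(M_{W_1}T)\|_{\C^{\beta'}}$, and Schatten--H\"older $\|S_1^*S_2\|_{\C^{\beta'}}\leq \|S_1\|_{\C^{2\beta'}}\|S_2\|_{\C^{2\beta'}}$ together with the assumed Schatten bound bounds the latter by $C\|W_1\|_Y\|W_2\|_Y = C\|V\|_{X^*}$. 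Taking the supremum over $V$ yields the Strichartz-type estimate.

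The main obstacle is technical rather than conceptual: justifying the Lorentz duality near the degenerate endpoint $q = 2$, for which the hypothesis $a = 2$ is imposed so that $L^{q/2,a/2}$ collapses to the Banach space $L^1$ with well-understood dual, and verifying the factorisation $V = W_1 \overline{W_2}$ with prescribed absolute values, which is realised by the polar choice $W_1 = |V|^{1/2}e^{i\phi}$, $W_2 = |V|^{1/2}$ with $\phi = \arg V$. Once these measure-theoretic points are in place, the rest of the argument reduces to the orthonormality of $(f_j)_j$, cyclicity of the trace, and the Schatten trace duality $(\C^\beta)^* = \C^{\beta'}$.
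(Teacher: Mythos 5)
Your proof is correct and reproduces, in essentially the same way, the standard Frank--Sabin duality argument that the paper cites for this lemma: the trace identity expressing the density pairing as $\mathrm{tr}\bigl(\Lambda\, T^* M_{W_1\overline{W_2}}T\bigr)$, Schatten duality for positive operators, Schatten--H\"older together with $\|S^*S\|_{\C^p}=\|SS^*\|_{\C^p}$, the Lorentz scaling $\||W|^2\|_{L^{p/2,s/2}}=\|W\|_{L^{p,s}}^2$ (which yields $2(p/2)'=\widetilde p$), and the polar factorization $V=W_1\overline{W_2}$ with $|W_1|=|W_2|=|V|^{1/2}$. The present paper states the lemma without proof, deferring to \cite{FS_AJM} and \cite{BHLNS}, and the caveats you flag---Lorentz duality at the $q=2$, $a=2$ endpoint (so that $L^{q/2,a/2}=L^1$) and, implicitly, finite-rank approximation of $\Lambda$ to justify cyclicity of the trace before passing to the limit---are precisely the points requiring care, which you identify and address correctly.
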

In the above lemma, for $p > 1$, the notation $\C^{p}$ denotes the Schatten space of all compact operators on $L^2(\mathbb R^{d+1})$ such that $\|T\|_{\C^{p}}:=\|\lambda\|_{\ell^{p}}<\infty$, where $(\lambda_j)_j$ are the singular values of $T$. We extend this to $p = \infty$ with the convention that $\|\cdot\|_{\C^\infty}$ is the usual operator norm. In our proofs, we often recall that $\|\cdot\|_{\C^2}$ is the Hilbert--Schmidt norm and coincides with the norm $\|\cdot\|_{L^2(\mathbb R^{d+1}\times\mathbb R^{d+1})}$ of the kernel of the corresponding operator. 
Moreover, we make use several times of a useful characterization of Schatten norms.
\begin{lemma}[{\cite[Proposition~2.6]{Simon}}]
\label{l:Simon}
Let $n \in \N$ and $\mathcal{B}$ denote the collection all orthonormal families in $L^2(\R^n)$. For $1 \leq p \leq \infty$ and $T \in \mathcal{C}^p(L^2(\R^n))$ we have
\begin{equation}\label{est:Prop2.6Simon-1}
\| T\|_{\mathcal{C}^{p}} = \sup_{\phi, \psi \in \mathcal{B}} \|\langle  T \phi_j,   \psi_j \rangle_{L^2} \|_{\ell^{p}}.
\end{equation}
Conversely, if $T$ is compact in $L^2(\R^n)$ and the right-hand side of \eqref{est:Prop2.6Simon-1} is finite, then $T \in \mathcal{C}^{p}(L^2(\R^n))$. When $1\leq p<\infty$, ``$ \, T$ is compact" may be replaced by ``T is bounded" in the last statement.
\end{lemma}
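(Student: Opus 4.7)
The plan is to reduce everything to the well-understood setting of infinite matrices on $\ell^2$. Given orthonormal families $(\phi_j)_j, (\psi_j)_j$ in $L^2(\R^n)$, I would define the contractions $A\colon \ell^2 \to L^2(\R^n)$ by $A c = \sum_j c_j \phi_j$ and $B\colon L^2(\R^n) \to \ell^2$ by $(Bg)_j = \langle g, \psi_j\rangle_{L^2}$. Both $A$ and $B$ are isometries on closed subspaces, so $\|A\|_{\mathrm{op}},\|B\|_{\mathrm{op}} \leq 1$. The $(j,k)$-entry of $M := BTA$ acting on $\ell^2$ is exactly $\langle T\phi_k, \psi_j\rangle_{L^2}$, so the sequence appearing on the right-hand side of \eqref{est:Prop2.6Simon-1} is precisely the diagonal of $M$.

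To obtain the inequality ``$\geq$'' in \eqref{est:Prop2.6Simon-1} I would chain together two facts. First, the ideal property of the Schatten classes together with $\|A\|_{\mathrm{op}},\|B\|_{\mathrm{op}}\leq 1$ gives $\|M\|_{\C^p}\leq \|T\|_{\C^p}$. Second, the diagonal-extraction map $M\mapsto (\langle Me_j,e_j\rangle)_j$ is a contraction $\C^p(\ell^2)\to \ell^p$. The endpoint $p=\infty$ is trivial, and $p=1$ follows from writing $\sum_j |\langle Me_j,e_j\rangle| = |\mathrm{tr}(DM)|$ for a diagonal operator $D$ of modulus one and bounding by trace duality $|\mathrm{tr}(DM)|\leq \|D\|_{\mathrm{op}}\|M\|_{\C^1}$; complex interpolation then recovers the intermediate exponents. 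Combining the two steps yields the desired inequality uniformly in $(\phi_j),(\psi_j)$. The matching direction ``$\leq$'' comes from the singular value decomposition: writing $T=\sum_k s_k(T)\langle\cdot,e_k\rangle f_k$ and choosing $\phi_j=e_j$, $\psi_j=f_j$ produces $\langle T\phi_j,\psi_j\rangle_{L^2} = s_j(T)$, so the supremum already attains $\|(s_k(T))_k\|_{\ell^p}=\|T\|_{\C^p}$.

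For the converse statements, suppose the right-hand side of \eqref{est:Prop2.6Simon-1} is finite. If $T$ is compact then the singular value decomposition is available, and running the previous choice in reverse recovers $(s_k(T))_k\in \ell^p$, hence $T\in \C^p$. For the last assertion — that when $1\leq p<\infty$ mere boundedness suffices — I would argue by contradiction: if $T$ is bounded but not compact, it fails complete continuity on $L^2(\R^n)$, so there exist $\delta>0$ and an orthonormal $(\phi_j)_j$ with $\phi_j\rightharpoonup 0$ and $\|T\phi_j\|_{L^2}\geq\delta$ along a subsequence. Since $T\phi_j\rightharpoonup 0$, a recursive Gram--Schmidt style selection produces orthonormal $(\psi_k)_k$ and a subsequence $(\phi_{j_k})_k$ with $|\langle T\phi_{j_k},\psi_k\rangle_{L^2}|\geq \delta/2$ for every $k$: at each stage one uses that $\langle T\phi_j,\psi_i\rangle_{L^2}\to 0$ as $j\to\infty$ for each fixed $\psi_i$, in order to peel off the components along the previously chosen $\psi_1,\dots,\psi_{k-1}$ while keeping substantial mass. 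This forces the supremum in \eqref{est:Prop2.6Simon-1} to equal $+\infty$ for any $p<\infty$, contrary to hypothesis.

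The main obstacle is this final recursive selection, together with the closely related verification that diagonal extraction is bounded $\C^p\to \ell^p$; both are classical but require some care with quantitative bookkeeping. Everything else is essentially transport: once $T$ is pulled back to a matrix $M$ on $\ell^2$ by the contractions $A$ and $B$, the problem becomes an interplay between ideal norms of $M$ and $\ell^p$ norms of its diagonal, and the singular value decomposition of $T$ supplies the extremal choice of $(\phi_j),(\psi_j)$.
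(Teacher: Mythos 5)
The paper does not give its own proof --- the lemma is stated with a citation to Proposition~2.6 of Simon's \emph{Trace Ideals and Their Applications} --- so there is no in-text argument to compare against. Your proposal is correct and follows the standard line: compress $T$ to a matrix $M=BTA$ via the contractions $A,B$, use the Schatten ideal property plus the fact that diagonal extraction is a contraction $\C^p\to\ell^p$ for the inequality $\|T\|_{\C^p}\geq\sup(\cdot)$, and use the singular value decomposition both for the reverse inequality and for the converse when $T$ is compact.

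Two steps deserve a touch more care. For the claim that boundedness without compactness produces an orthonormal $(\phi_j)_j$ with $\|T\phi_j\|\geq\delta$, failure of complete continuity only gives a weakly null sequence of unit vectors, and converting this into a genuinely orthonormal sequence requires an approximation. A clean shortcut: since $|T|=(T^*T)^{1/2}$ is not compact, some spectral projection $\1_{[\delta,\infty)}(|T|)$ has infinite rank, and any orthonormal basis $(\phi_j)_j$ of its range satisfies $\|T\phi_j\|=\||T|\phi_j\|\geq\delta$ outright. In the recursive selection, the bookkeeping should be made explicit: having chosen $\psi_1,\dots,\psi_{k-1}$, pick $j_k$ large enough that $\sum_{i<k}|\langle T\phi_{j_k},\psi_i\rangle|^2<\delta^2/4$ (possible since $T\phi_j\rightharpoonup 0$ and the sum has finitely many terms), so that the component of $T\phi_{j_k}$ orthogonal to $\mathrm{span}\{\psi_1,\dots,\psi_{k-1}\}$ has norm at least $\sqrt{3}\delta/2$; normalizing this component gives $\psi_k$. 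Finally, the interpolation you invoke for the diagonal-extraction bound is legitimate because $[\C^1,\C^\infty]_\theta=\C^p$ and $[\ell^1,\ell^\infty]_\theta=\ell^p$ hold with equal norms under the complex method; an interpolation-free alternative is to reduce to positive $M$ via the factorization $M=(U|M|^{1/2})|M|^{1/2}$ and use that the diagonal of a positive operator is majorized by its eigenvalues. With these minor points filled in, the argument is complete.
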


Thanks to Lemma \ref{l:duality}, our desired estimates follow from
\begin{equation}\label{i:main dual}
    \|WU|D|^{-2s}U^*\overline{W}\|_{\C^{\beta'}}
    \lesssim
    \|W\|_{L_t^{\widetilde{q}}L_x^2}^2
\end{equation}
for $\beta'>\frac {\widetilde{q}}{2}$ and $2 < \widetilde{q} < \infty$, where
\[
s=\frac{d}{2}-\frac 2q=\frac{d-2}{2}+\frac{2}{\widetilde{q}}.
\]
To obtain such estimates via an analytic interpolation argument, we prove the following slightly strenghened versions of \eqref{i:St middle}, \eqref{i:St top} and \eqref{i:St bottom}:
\begin{equation}\label{i:C middle}
    \|W_1U|D|^{-2s+i\kappa}U^*W_2\|_{\C^2}
    \leq C(\kappa)
    \|W_1\|_{L_t^{\widetilde{q},4}L_x^2}\|W_2\|_{L_t^{\widetilde{q},4}L_x^2}
\end{equation}
for $2 < \widetilde{q} < 4$,
\begin{equation}\label{i:C top}
    \|W_1U|D|^{-2s+i\kappa}U^*W_2\|_{\C^\infty}
    \leq C(\kappa)
    \|W_1\|_{L_t^{\widetilde{q},\infty}L_x^2}\|W_2\|_{L_t^{\widetilde{q},\infty}L_x^2}
\end{equation}
for $2 < \widetilde{q} < \infty$, and 
\begin{equation}\label{i:C bottom}
    \|W_1U|D|^{-2s+i\kappa}U^*W_2\|_{\C^{\beta'}}
    \leq C(\kappa)
    \|W_1\|_{L_t^{\widetilde{q},2}L_x^2}\|W_2\|_{L_t^{\widetilde{q},2}L_x^2}
\end{equation}
for $\beta'>\frac{\widetilde{q}}{2}$, $2<\widetilde{q}<4$. In each case, $\kappa\in\mathbb R$ and $C(\kappa)$ is a constant which grows subexponentially with $\kappa$. As we shall see, the estimates \eqref{i:C middle} and \eqref{i:C top} follow from a certain dispersive estimate and O'Neil's refined version of Hardy--Littlewood--Sobolev inequality, whilst \eqref{i:C bottom} follows from appropriate frequency-local estimates and use of bilinear interpolation (in the spirit of the Keel--Tao argument \cite{KeelTao}) to sum up the localized estimates.

\section{Proof of Theorem \ref{t:maximalspace} for $2 < q \leq 4$}\label{s:Upper half}

First we claim that it suffices to prove \eqref{e:maximalspace} with $U$ replaced by $U_{\leq 1} := U \chi(|D|)$, where $\chi \in C_0^\infty(-4,4)$. Indeed, from the global nature of the estimate, a simple rescaling argument would then give \eqref{e:maximalspace} with $U \chi(\varepsilon|D|)$ for any $\varepsilon > 0$ and the desired estimate follows by taking $\varepsilon$ to zero.

When $2 < q \leq 4$, even though the claimed estimates are only valid for $d \geq 2$, some of the preparatory results are valid for $d = 1$ too so we include these cases where possible. Also, it will be handy to introduce the notation $A \lessapprox_\kappa B$ to mean $A \leq C(\kappa) B$, where $C(\kappa) \lesssim e^{\varepsilon |\kappa|}$ for any $\varepsilon >0$.

\subsection{Preparation for the interpolation}

\begin{proposition}
\label{proposition:UpperHalfC2Cinfty}
Let $d \geq 1$. If $4<q_0<\infty$, then
\begin{equation}\label{i:C2}
\|W_1U_{\leq 1}|D|^{-2s_0+i\kappa}U_{\leq 1}^*W_2\|_{\C^2}
\lessapprox_\kappa 
\|W_1\|_{L_t^{\widetilde{q_0},4}L_x^2}\|W_2\|_{L_t^{\widetilde{q_0},4}L_x^2}
\end{equation}
where $s_0 = \frac{d}{2}- \frac{2}{q_0}$. Also, if either $d \geq 2$ and $2 < q_1 < \infty$, or $d = 1$ and $4 \leq q_1 < \infty$, then
\begin{equation}\label{i:Cinfty}
\|W_1U_{\leq 1}|D|^{-2s_1+i\kappa}U_{\leq 1}^*W_2\|_{\C^\infty}
\lessapprox_\kappa
\|W_1\|_{L_t^{\widetilde{q_1},\infty}L_x^2}\|W_2\|_{L_t^{\widetilde{q_1},\infty}L_x^2},
\end{equation}
where $s_1 = \frac{d}{2}- \frac{2}{q_1}$.
\end{proposition}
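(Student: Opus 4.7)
The strategy is to derive both \eqref{i:C2} and \eqref{i:Cinfty} from a single dispersive-type bound on the integral kernel
\[
K(t, x) = c_d \int_{\R^d} e^{i(t|\xi|^2 + x \cdot \xi)} |\xi|^{-2s+i\kappa} \chi(|\xi|)^2 \, d\xi
\]
of $U_{\leq 1}|D|^{-2s+i\kappa}U_{\leq 1}^*$, combined with H\"older in Lorentz spaces and O'Neil's refinement of the Hardy--Littlewood--Sobolev inequality. The key claim is
\[
\|K(t,\cdot)\|_{L^\infty_x} \lessapprox_\kappa \min(1, |t|^{-2/q})
\]
uniformly in $x$, where $q \in \{q_0, q_1\}$ and $s$ is the corresponding Sobolev exponent. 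For $|t|\leq 1$ this follows immediately from $\int |\xi|^{-2s} \chi(|\xi|)^2 \, d\xi < \infty$ (using $2s<d$). For $|t|>1$, the rescaling $\xi = \eta/\sqrt{|t|}$ extracts the factor $|t|^{-d/2 + s} = |t|^{-2/q}$ and reduces matters to proving boundedness, uniformly in $x$ and subexponentially in $\kappa$, of the remaining oscillatory integral; this is achieved by standard stationary phase applied to the phase $\mathrm{sgn}(t)|\eta|^2 + (x/\sqrt{|t|})\cdot\eta$ in the bounded $|\eta|$ regime, together with integration by parts for large $|\eta|$, the cutoff $\chi(|\eta|/\sqrt{|t|})$ ensuring convergence.

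For \eqref{i:C2}, using that the $\C^2$ (Hilbert--Schmidt) norm equals the $L^2$ norm of the integral kernel and setting $G_i(t) := \|W_i(t,\cdot)\|_{L^2_x}$, the dispersive bound reduces the claim to
\[
\|W_1 U_{\leq 1}|D|^{-2s_0+i\kappa}U_{\leq 1}^* W_2\|_{\C^2}^2 \lesssim \iint G_1(t_1)^2 G_2(t_2)^2 h(t_1-t_2)^2 \, dt_1 \, dt_2,
\]
with $h(t) = C(\kappa)\min(1,|t|^{-2/q_0})$. Since $h^2 \in L^{q_0/4,\infty}_t$ with $1/(q_0/4) = 2 - 4/\tilde{q_0}$, and $\|G_i^2\|_{L^{\tilde{q_0}/2, 2}} = \|G_i\|_{L^{\tilde{q_0}, 4}}^2$, H\"older in Lorentz combined with O'Neil's convolution estimate closes the argument; the requirement $q_0 > 4$ is precisely what ensures $q_0/4>1$, as needed for O'Neil. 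For \eqref{i:Cinfty}, I would estimate the operator norm by duality: Cauchy--Schwarz in $x_2$ followed by the dispersive bound gives the pointwise estimate
\[
|W_1 U_{\leq 1}|D|^{-2s_1+i\kappa}U_{\leq 1}^* W_2\phi(t_1,x_1)| \leq |W_1(t_1,x_1)|\,(h \ast (G_2\Phi))(t_1),
\]
with $\Phi(t) = \|\phi(t,\cdot)\|_{L^2_x}$ and $h(t) = C(\kappa)\min(1,|t|^{-2/q_1})$. Taking the $L^2_{t,x}$ norm, two applications of H\"older in Lorentz (exploiting $1/\tilde{q_1} + 1/q_1 = 1/2$) and one application of O'Neil with $h \in L^{q_1/2,\infty}_t$ control the result by $\|G_1\|_{L^{\tilde{q_1},\infty}}\|G_2\|_{L^{\tilde{q_1},\infty}}\|\Phi\|_{L^2}$.

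The main obstacle is the dispersive bound itself, particularly the interaction between the low-frequency singularity $|\xi|^{-2s}$ (present when $s>0$) and the stationary point $\xi_0 = -x/(2t)$ of the phase, which coincide at $x = 0$. A secondary subtlety---explaining the case distinction in \eqref{i:Cinfty}---is the requirement $h \in L^{q_1/2,\infty}_t$: for $d \geq 2$ and any $q_1 > 2$, the singular amplitude dictates the (optimal) dispersive rate $|t|^{-2/q_1}$, which lands $h$ in $L^{q_1/2,\infty}_t$; for $d=1$ and $q_1 < 4$, however, the amplitude $|\xi|^{-2s_1}$ is smooth ($s_1\leq 0$) and the optimal dispersive rate is the classical $|t|^{-1/2}$, placing $h$ only in $L^{2,\infty}_t$, which suffices precisely when $q_1/2 \geq 2$, yielding the restriction $q_1 \geq 4$.
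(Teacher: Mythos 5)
Your proposal is correct and follows essentially the same route as the paper: the $\C^2$ bound reduces, via the dispersive kernel estimate (the paper's Lemma \ref{l:GLNY}), to the $L^2$ norm of the kernel of the twisted operator, which is then controlled by O'Neil's Lorentz refinement of fractional integration, and the $\C^\infty$ bound follows from the same dispersive estimate together with Cauchy--Schwarz in space and O'Neil/H\"older in Lorentz spaces in time. The only cosmetic differences are that you sketch the dispersive estimate (which the paper cites from \cite{KPV, GLNY, BLN_Forum}) and that you organise the $\C^\infty$ argument via a pointwise bound on the kernel rather than first isolating the intermediate $L_t^{q_1',2}L_x^1 \to L_t^{q_1,2}L_x^\infty$ estimate; your explanation of the $d=1$, $q_1\geq 4$ restriction via the failure of the $|t|^{-2/q_1}$ decay when $s_1<0$ matches the constraint $\frac{2}{q}\leq \frac{d}{2}$ in Lemma \ref{l:GLNY}.
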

The proof of Proposition \ref{proposition:UpperHalfC2Cinfty} rests on the following key lemma. To emphasise that Theorem \ref{t:maximalspace} readily extends to fractional Schr\"odinger propagators, we state the result at such a level of generality. 
\begin{lemma}[Dispersive estimate]\label{l:GLNY}
Let $m\in (0,\infty)\backslash\{1\}$, $d\geq1$, $0<q<\infty$ and 
\[
s=\frac d2-\frac m q,
\qquad
\frac2q\leq \frac d2.
\] 
Then, we have 
\begin{equation}
\label{e:dispersive}
\bigg|
\int e^{i(x \cdot\xi+t|\xi|^m)}|\xi|^{-2s+ i \kappa} \chi(|\xi|)\,\d\xi
\bigg|
\lessapprox_\kappa
|t|^{-\frac2q}
\end{equation}
and
\begin{equation}\label{e:dispersiveloc}
\bigg|
\int e^{i(x \cdot\xi+t|\xi|^m)}\varphi(|\xi|)\,\d\xi
\bigg|
\lesssim
(1+|t|)^{-\frac d2}.
\end{equation}
\end{lemma}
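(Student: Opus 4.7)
I would treat \eqref{e:dispersiveloc} first, since it is the standard frequency-localised dispersive bound and \eqref{e:dispersive} will be reduced to it. On the support of $\varphi$, the phase $\Phi(\xi) = x \cdot \xi + t|\xi|^m$ has Hessian in $\xi$ equal to $t \,D^2(|\xi|^m)$; a direct computation (separating the radial and angular directions) gives $\det D^2(|\xi|^m) = m^d(m-1)|\xi|^{(m-2)d}$, which is a nonzero constant of modulus $\sim 1$ on $|\xi| \sim 1$ precisely because $m \notin \{0,1\}$. Standard stationary phase on the (at most one, non-degenerate) critical point of $\Phi$ delivers the bound $|t|^{-d/2}$ when $|t| \gtrsim 1$, and the trivial estimate gives $O(1)$ when $|t| \lesssim 1$; together these produce $(1+|t|)^{-d/2}$.

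For \eqref{e:dispersive} I would decompose the amplitude dyadically. Fix $\psi \in C_c^\infty(\tfrac12, 2)$ with $\sum_{k \in \Z}\psi(2^{-k}r)=1$ for $r > 0$ and, using that $\chi$ is compactly supported, write
\[
\chi(|\xi|)\,|\xi|^{-2s+i\kappa} = \sum_{k \leq K_0} a_k(\xi), \qquad a_k(\xi) := \chi(|\xi|)\,|\xi|^{-2s+i\kappa}\,\psi(2^{-k}|\xi|),
\]
for some fixed $K_0$. The rescaling $\xi = 2^k \eta$ converts the contribution of $a_k$ to
\[
I_k = 2^{k(d-2s+i\kappa)} \int e^{i(2^k x \cdot \eta + t2^{km}|\eta|^m)}\, b_k(\eta)\,d\eta, \qquad b_k(\eta) := |\eta|^{-2s+i\kappa}\chi(2^k|\eta|)\psi(|\eta|),
\]
where $b_k$ is smooth, supported in $|\eta| \sim 1$, and $\|b_k\|_{C^N} \lesssim (1+|\kappa|)^N$ uniformly in $k \leq K_0$. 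Applying the same stationary-phase argument as in \eqref{e:dispersiveloc}, now with $b_k$ in place of $\varphi$, gives
\[
|I_k| \lesssim (1+|\kappa|)^{C_d}\, 2^{k(d-2s)} \bigl(1 + |t|2^{km}\bigr)^{-d/2}.
\]

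Setting $\alpha := d - 2s = 2m/q > 0$, I would sum by changing variables to $u = |t|^{1/m} 2^k$: the bound factorises as $|t|^{-2/q}$ times $\sum_{u\,\text{dyadic},\, u \lesssim |t|^{1/m}} u^\alpha (1+u^m)^{-d/2}$. The part with $u \leq 1$ is summable since $\alpha > 0$, while the part $1 \leq u \leq |t|^{1/m}$ is a geometric series in $u$ with exponent $\alpha - md/2 = m(\tfrac2q - \tfrac d2) \leq 0$, so the assumption $\tfrac{2}{q} \leq \tfrac{d}{2}$ is exactly what keeps it controlled (by $O(1)$ in the strict case; only a harmless $\log|t|$-factor can arise in the degenerate equality case, which is outside the regime of Theorem~\ref{t:maximalspace}).

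The main obstacle is tracking the $\kappa$-dependence through the stationary-phase constants. The only source of $\kappa$ in $b_k$ is the factor $|\eta|^{i\kappa}$ on $|\eta| \sim 1$, whose $C^N$-norm is $O((1+|\kappa|)^N)$. Feeding this into a quantitative stationary-phase estimate produces a polynomial, hence subexponential, dependence on $|\kappa|$, which is precisely the growth encoded in the notation $\lessapprox_\kappa$. Once this is verified, the dyadic sum above is uniformly bounded in $k$ by $|t|^{-2/q}$ and the proof of \eqref{e:dispersive} is complete.
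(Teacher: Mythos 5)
Your approach for \eqref{e:dispersiveloc} matches what the paper sketches in the proof of Lemma~\ref{l:disp space frec loc} (there phrased via non-vanishing Gaussian curvature of the graph $\{(\xi,|\xi|^m):|\xi|\sim 1\}$; your direct Hessian computation $\det D^2(|\xi|^m)=m^d(m-1)|\xi|^{(m-2)d}$ is the same fact). For \eqref{e:dispersive} the paper does not supply a proof at all --- it cites \cite{KPV}, \cite{GLNY}, \cite{BLN_Forum}, where the argument is a direct non/stationary-phase analysis of the full integral. Your dyadic decomposition is a legitimate and self-contained alternative, and your tracking of the $\kappa$-dependence through the amplitude $|\eta|^{i\kappa}$ (polynomial, hence consistent with $\lessapprox_\kappa$) is correct.

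There is, however, a real gap against the lemma \emph{as stated}. Your uniform bound $|I_k|\lesssim 2^{k(d-2s)}(1+|t|2^{km})^{-d/2}$ genuinely produces a $\log|t|$ loss after summation when $\tfrac2q=\tfrac d2$, yet that borderline case is included in the hypothesis (and, e.g., $d=1$, $q_1=4$ is permitted in \eqref{i:Cinfty} of Proposition~\ref{proposition:UpperHalfC2Cinfty}). You flag the loss and correctly observe that the interpolation arguments in Sections~\ref{s:Upper half}--\ref{s:lower half} only invoke strict inequalities, so Theorem~\ref{t:maximalspace} is unaffected; but the lemma itself then isn't fully proved by your method. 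The fix is standard: the rescaled phase $2^kx\cdot\eta+t2^{km}|\eta|^m$ has its stationary point at $|\eta|\sim(|x|/(|t|2^{km}))^{1/m}2^{-k}$, which lies in $\supp b_k$ for essentially a single value of $k$; off that shell one has $|\nabla_\eta\theta_k|\gtrsim\max(2^k|x|,|t|2^{km})$ and integration by parts gives arbitrarily fast decay. Replacing the uniform $(1+|t|2^{km})^{-d/2}$ estimate with this critical-shell-plus-rapid-tail decomposition removes the log and recovers the endpoint. With that modification the proof is complete; as written it only establishes the open range $\tfrac2q<\tfrac d2$.
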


The estimate \eqref{e:dispersive} with $m \geq 2$ can be found in Kenig--Ponce--Vega \cite[Lemma 3.4]{KPV}. These estimates have also featured more recently in \cite{GLNY} and \cite{BLN_Forum}, where a proof in the general case
 $m \in (0,\infty) \setminus \{0\}$ is given. The frequency-local estimate \eqref{e:dispersiveloc} is more standard (an explanation can be found, for example, in the forthcoming proof of Lemma \ref{l:disp space frec loc}).

The following Lorentz space refinement of the Hardy--Littlewood--Sobolev inequality by O'Neil \cite{ONeil} will also play an important role.
\begin{lemma}[\cite{ONeil}]\label{l:ONeil}
Let $d \geq 1$, $0<\lambda<d$, $1<p_1, p_2<\infty$, and $1\leq a \leq\infty$ satisfy
\[
\frac{1}{p_1} +\frac{1}{p_2}+\frac\lambda d=2.
\]
Then
\[
\left|\int_{\mathbb R^d}\int_{\mathbb R^d}f(x)g(y)|x-y|^{-\lambda}\,\d x\d y\right|
\lesssim 
\|f\|_{L^{p_1,a}}\|g\|_{L^{p_2,a'}}
\]
for all $f\in L^{p_1,a}(\mathbb R^d)$ and $g\in L^{p_2,a'}(\mathbb R^d)$.
\end{lemma}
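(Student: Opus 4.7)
The plan is to reduce the statement to a Young-type inequality for convolution in Lorentz spaces. First, rewrite
\[
\int_{\mathbb R^d}\int_{\mathbb R^d}f(x)g(y)|x-y|^{-\lambda}\,\d x\d y
=
\int_{\mathbb R^d}g(y)(K*f)(y)\,\d y,
\]
where $K(x)=|x|^{-\lambda}$. By H\"older's inequality in Lorentz spaces, which asserts $|\int gh|\lesssim \|g\|_{L^{p_2,a'}}\|h\|_{L^{p_2',a}}$ for the H\"older/Lorentz-conjugate pair, the lemma reduces to
\begin{equation}\label{p:KfLorentz}
\|K*f\|_{L^{p_2',a}}\lesssim \|f\|_{L^{p_1,a}}.
\end{equation}

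Next, a direct computation of the distribution function gives $K\in L^{d/\lambda,\infty}(\mathbb R^d)$ with quasinorm depending only on $d$ and $\lambda$. The scaling hypothesis $\frac1{p_1}+\frac1{p_2}+\frac\lambda d=2$ is equivalent to $\frac1{p_2'}=\frac1{p_1}+\frac\lambda d-1$, and from $1<p_1,p_2<\infty$ together with $0<\lambda<d$ we read off $1<p_2'<\infty$. Hence \eqref{p:KfLorentz} is a special case of the Young-type estimate
\[
\|F*G\|_{L^{r,a}}\lesssim \|F\|_{L^{p,a}}\|G\|_{L^{q,\infty}},
\qquad \tfrac1r=\tfrac1p+\tfrac1q-1,
\]
valid for $1<p,q,r<\infty$ and $1\leq a\leq\infty$, applied with $(F,G,p,q,r)=(f,K,p_1,d/\lambda,p_2')$.

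To establish the Young-type estimate I would invoke O'Neil's pointwise rearrangement inequality
\[
(F*G)^{**}(t)\leq t\,F^{**}(t)G^{**}(t)+\int_t^\infty F^*(s)G^*(s)\,\d s,
\]
where $h^*$ is the nonincreasing rearrangement of $h$ and $h^{**}(t)=t^{-1}\int_0^t h^*(s)\,\d s$. Specializing to $G=K$, the identity $K^*(s)\simeq s^{-\lambda/d}$ holds, so $K^{**}(s)\simeq s^{-\lambda/d}$. Substituting into the rearrangement bound and integrating against the measure $t^{1/r-1}\,\d t/t$ that defines the $L^{r,a}$ quasinorm, the required inequality follows from two applications of Hardy's inequality (which is where both $1<r<\infty$ and the summability index $a$ are used in a balanced way).

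The main obstacle is the careful bookkeeping of Lorentz indices when transferring the pointwise rearrangement estimate into the $L^{r,a}$ quasinorm via Hardy's inequality, especially at the endpoint $a=\infty$ where Hardy must be replaced by a direct monotonicity argument on $F^{**}$. An alternative route, which bypasses the rearrangement calculus, is to first prove the restricted weak-type bilinear bound
\[
\iint \chi_E(x)\chi_F(y)|x-y|^{-\lambda}\,\d x\d y
\lesssim |E|^{1/p_1}|F|^{1/p_2}
\]
by splitting the integration in $y$ according to the dyadic distance from $E$, and then to upgrade it to all Lorentz norms with general $a\in[1,\infty]$ by bilinear real interpolation (of Stein--Weiss type) between two admissible scaling pairs. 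Either route yields the Lorentz-refined Hardy--Littlewood--Sobolev bound as stated.
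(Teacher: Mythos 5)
Your argument is correct. The paper does not prove this lemma at all --- it is quoted directly from O'Neil's paper \cite{ONeil} --- and your primary route (reduce via Lorentz--H\"older to $\|K*f\|_{L^{p_2',a}}\lesssim\|f\|_{L^{p_1,a}}$ with $K=|\cdot|^{-\lambda}\in L^{d/\lambda,\infty}$, then apply the Lorentz--Young convolution inequality proved from the rearrangement bound $(F*G)^{**}(t)\leq tF^{**}(t)G^{**}(t)+\int_t^\infty F^*G^*$ and Hardy's inequality) is precisely O'Neil's original proof, with the exponent bookkeeping ($\tfrac{1}{p_2'}=\tfrac{1}{p_1}+\tfrac{\lambda}{d}-1$, all primary indices strictly between $1$ and $\infty$, and the secondary indices satisfying $\tfrac1a+\tfrac1{a'}\geq 1$ as O'Neil's theorem requires) all checking out; your alternative via restricted weak-type bounds and bilinear real interpolation is also a valid standard route.
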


\begin{proof}[Proof of Proposition \ref{proposition:UpperHalfC2Cinfty}]
We first show \eqref{i:C2}, which means we assume $d \geq 1$ and $4 < q_0 < \infty$. Note that the operator $W_1U_{\leq 1}|D|^{-2s_0 + i\kappa}U_{\leq 1}^*W_2$ is given by
\[
W_1U_{\leq 1}|D|^{-2s_0}U_{\leq 1}^*W_2\phi(t,x) 
= 
\int_{\mathbb R^{1+d}}\phi(t',x')W_1(t,x)W_2(t',x')K(t-t',x-x')\,\d t'\d x',
\]
where $K(t,x)=\int e^{i(x \cdot\xi+t|\xi|^2)}|\xi|^{-2s+ i \kappa}\,\d\xi$.
Thus, by Lemmas \ref{l:GLNY} and \ref{l:ONeil}, we have
\begin{align*}
\|W_1U_{\leq 1}|D|^{-2s_0 + i\kappa}U_{\leq 1}^*W_2\|_{\C^2}^2 
& = \int 
\bigl| W_1(t,x)W_2(t',x')K(t-t',x-x') 
\bigr|^2 \,\d t'\d x' \d t \d x \\
& \lessapprox_\kappa
\int_{\mathbb R}\int_{\mathbb R}
\|W_1(t,\cdot)\|_{L_x^2}^2 \|W_2(t',\cdot)\|_{L_x^2}^2|t-t'|^{-4/q_0}\,\d t\d t'\\
& \lesssim \|W_1\|_{L_t^{\widetilde{q_0},4}L_x^2}\|W_2\|_{L_t^{\widetilde{q_0},4}L_x^2}.
\end{align*}

Next we show \eqref{i:Cinfty}, so we assume either $d \geq 2$ and $2 < q_1 < \infty$, or $d = 1$ and $4 \leq q_1 < \infty$. First note that use of Lemmas \ref{l:GLNY} and \ref{l:ONeil} again yields
\begin{align*}
\|U_{\leq 1}|D|^{-2s_1+i\kappa}U_{\leq 1}^* \phi\|_{L_t^{q_1, 2} L_x^{\infty}} 
& = \bigg\| \int K(t-t',x-x') \phi(t',x') \d t' \d x' \bigg\|_{L_t^{q_1, 2} L_x^{\infty}}\\
& \lessapprox_\kappa  \bigg\| \int_{\mathbb R} |t-t'|^{-2/q_1} \| \phi(t', \cdot)\|_{L_x^1} \d t' \bigg\|_{L_t^{q_1, 2}}\\
& \lesssim \|\phi \|_{L_t^{q_1',2}L_x^1}.
\end{align*}
Thus, by the H\"{o}lder inequality twice,
\begin{align*}
\|W_1U_{\leq 1}|D|^{-2s_1+i\kappa}U_{\leq 1}^*W_2 \phi\|_{L^2}
& \lesssim\|W_1\|_{L_t^{\widetilde{q_1},\infty}L_x^2} \|U_{\leq 1}|D|^{-2s_1+i\kappa}U^* W_2 \phi\|_{L_t^{q_1, 2}} \\
& \lessapprox_\kappa
\|W_1\|_{L_t^{\widetilde{q_1},\infty}L_x^2}\|W_2\|_{L_t^{\widetilde{q_1},\infty}L_x^2} \|\phi\|_{L^2}
\end{align*}
and this gives \eqref{i:Cinfty}.
\end{proof}

\subsection{Proof of \eqref{i:main dual} for $2 < q \leq 4$}

Proposition \ref{proposition:UpperHalfC2Cinfty} and an analytic interpolation imply the following estimates, which are slightly better than the desired estimates \eqref{i:main dual} thanks to the embedding of Lorentz spaces.
\begin{proposition}\label{proposition:Schatten_UpperHalf}
Let $d \geq 2$, $2 < q\leq 4$ and $s= \frac{d-2}{2} + \frac{2}{\widetilde{q}}$. 
For $\beta'>\frac{\widetilde{q}}{2}$ we have
\begin{equation}\label{est:goal-prop3.2}
\| W_1 U_{\leq 1} |D|^{- 2s} U_{\leq 1}^* W_2 \|_{\mathcal{C}^{\beta'}} \lesssim \|W_1 \|_{L_t^{\widetilde{q}, 2\beta'} L_x^2} \|W_2 \|_{L_t^{\widetilde{q}, 2\beta'} L_x^2}.
\end{equation}
\end{proposition}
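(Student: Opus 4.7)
The plan is to derive Proposition~\ref{proposition:Schatten_UpperHalf} from Proposition~\ref{proposition:UpperHalfC2Cinfty} by applying Stein's analytic interpolation theorem for Schatten classes to an operator-valued family that simultaneously deforms the Sobolev order via $|D|^{-2s(z)}$ and the mixed Lorentz exponents on the weights. The subexponential growth $C(\kappa) \lesssim e^{\varepsilon|\kappa|}$ in \eqref{i:C middle}--\eqref{i:C top} is precisely the admissible growth required by Stein's theorem, and the imaginary perturbation $|D|^{-2s + i\kappa}$ already encoded in those endpoint estimates provides the axis along which to interpolate the Sobolev exponent.

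First, I fix parameters. Given $\widetilde q \in [4,\infty)$ and $\beta' > \widetilde q/2$, set $\theta := 1 - 2/\beta' \in (0,1)$ and pick $\widetilde{q_0} \in (2,4)$ and $\widetilde{q_1} \in (2,\infty)$ with $1/\widetilde q = (1-\theta)/\widetilde{q_0} + \theta/\widetilde{q_1}$; the strict hypothesis $\beta' > \widetilde q/2$ (equivalently $1/\widetilde q > 1/(2\beta')$) is exactly the slack that permits $\widetilde{q_0}$ to sit strictly below $4$ and $\widetilde{q_1}$ to be finite, as required by the open endpoints of Proposition~\ref{proposition:UpperHalfC2Cinfty}. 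Put $s_i := (d-2)/2 + 2/\widetilde{q_i}$ and $s(z) := (1-z) s_0 + z s_1$ on the strip $S = \{0 \leq \Re z \leq 1\}$, so that $s(\theta) = s$, $\Re s(i\kappa) = s_0$, and $\Re s(1+i\kappa) = s_1$. Fixing $W_1, W_2$ of unit norm in $L_t^{\widetilde q, 2\beta'} L_x^2$, I construct analytic weight-families $\{W_i(z)\}_{z \in S}$ with $W_i(\theta) = W_i$ whose boundary traces satisfy
\[
\|W_i(i\kappa)\|_{L_t^{\widetilde{q_0},4} L_x^2} \lesssim 1, \qquad \|W_i(1+i\kappa)\|_{L_t^{\widetilde{q_1},\infty} L_x^2} \lesssim 1,
\]
uniformly in $\kappa \in \R$. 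Setting
\[
T_z := W_1(z)\, U_{\leq 1}\, |D|^{-2s(z)}\, U_{\leq 1}^*\, W_2(z),
\]
Proposition~\ref{proposition:UpperHalfC2Cinfty} then gives $\|T_{i\kappa}\|_{\C^2} \lessapprox_\kappa 1$ and $\|T_{1+i\kappa}\|_{\C^\infty} \lessapprox_\kappa 1$, and Stein's interpolation theorem produces $\|T_\theta\|_{\C^{\beta'}} \lesssim 1$, where $1/\beta' = (1-\theta)/2$ matches the choice of $\theta$. Since $T_\theta = W_1\, U_{\leq 1}\, |D|^{-2s}\, U_{\leq 1}^*\, W_2$, homogeneity in $W_1, W_2$ delivers \eqref{est:goal-prop3.2}.

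The main technical obstacle is the analytic weight construction itself: the second Lorentz index has to travel through $4 \to 2\beta' \to \infty$, which is more delicate than the standard $L^p$-interpolation in which one could simply take $W_i(z) = (W_i/|W_i|)\,|W_i|^{f(z)}$ for an affine $f$. I would resolve this either by reducing to step-function approximations of $W_i$ and interpolating the distribution functions on level sets directly, or by appealing to the complex-interpolation identification $[L_t^{\widetilde{q_0},4} L_x^2,\, L_t^{\widetilde{q_1},\infty} L_x^2]_\theta = L_t^{\widetilde q, 2\beta'} L_x^2$ for mixed Lorentz spaces and then invoking Stein in its formulation for abstract interpolation couples. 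A purely real bilinear interpolation between the two endpoints would only reach the weaker Lorentz--Schatten space $\C^{\beta', 2\beta'}$ on the output side, which is why Stein's complex method (with its strict matching of interpolation indices) is essential for obtaining the strong-type bound $\C^{\beta'}$ in \eqref{est:goal-prop3.2}.
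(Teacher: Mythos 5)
Your broad strategy is the same as the paper's: interpolate analytically between the two endpoint estimates of Proposition~\ref{proposition:UpperHalfC2Cinfty}, using $|D|^{-2s + i\kappa}$ to carry the Sobolev order and the $L^{\widetilde{q_0},4}_t L^2_x$ / $L^{\widetilde{q_1},\infty}_t L^2_x$ spaces as the two boundary couples, with $\theta$ pinned by $1/\beta' = (1-\theta)/2$. Your parameter bookkeeping is correct, and the observation that $\beta' > \widetilde{q}/2$ is exactly the slack needed to land $\widetilde{q_0}$ strictly in $(2,4)$ and $\widetilde{q_1}$ finite is the same as the role played by the paper's $\delta = \tfrac{1}{2}(\tfrac{1}{\beta} - \tfrac{2}{q}) > 0$.

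Where you diverge is in how the interpolation machinery is set up, and this is where your proposal has a real gap. The paper does not interpolate at the level of Schatten-class operators and analytic weight families. Instead it first invokes Lemma~\ref{l:Simon} to reduce the $\mathcal{C}^{\beta'}$ estimate to a bound on the scalar sequence $\{\langle W_1 U_{\leq 1}|D|^z U_{\leq 1}^* W_2\,\phi_j,\psi_j\rangle\}_j$ uniformly over orthonormal $(\phi_j), (\psi_j)$, verifies analyticity of the map $z \mapsto \langle W_1 U_{\leq 1}|D|^z U_{\leq 1}^*W_2\,\phi,\psi\rangle$ on the strip $-d < \Re z < 0$ via a dominated-convergence argument, and then applies a \emph{bilinear} analytic interpolation theorem (Cwikel--Janson, or the later refinements of Grafakos--Masty\l o and Grafakos--Ouhabaz) in the arguments $(W_1,W_2)$, with the Lorentz mixed-norm spaces as the two interpolation couples on the input side and the $\ell^p$ scale on the output. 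This reformulation is precisely what lets the paper \emph{avoid} the step you correctly identify as "the main technical obstacle": one never has to build analytic weight families $W_i(z)$ with boundary traces in $L^{\widetilde{q_0},4}$ and $L^{\widetilde{q_1},\infty}$; the bilinear theorem internalizes that construction.

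Your two suggested workarounds do not close the gap cleanly. The complex-interpolation identity $[L^{\widetilde{q_0},4}_t L^2_x, L^{\widetilde{q_1},\infty}_t L^2_x]_\theta = L^{\widetilde{q},2\beta'}_t L^2_x$ is delicate precisely because one endpoint has second Lorentz index $\infty$ (a non-separable space, so the first and second Calder\'on methods can disagree, and the naive identity for the interpolated second index is not automatic); it cannot simply be quoted. The step-function route is workable in principle, but it is essentially a re-proof of the bilinear analytic interpolation theorem in a special case, and would require verifying analyticity of $T_z$ (which your write-up does not address but the paper handles carefully, including the boundary integrability required for the dominated-convergence argument). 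In short, the interpolation plan and parameter choices are right, but the load-bearing step — passing from the two boundary estimates to the interior $\mathcal{C}^{\beta'}$ bound — is left to be done by a weight construction that is genuinely nontrivial at the $L^{\widetilde{q_1},\infty}$ endpoint, whereas the paper reroutes through Lemma~\ref{l:Simon} precisely so that this step becomes a citation.
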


\begin{proof}
We fix $\phi,\psi \in \mathcal{B}$ and note that, according to Lemma \ref{l:Simon}, it suffices to show \begin{equation}\label{est:propSchatten_UpperHalf-1}
\|\langle W_1 U_{\leq 1} |D|^{- 2s} U_{\leq 1}^* W_2 \phi_j,  \psi_j \rangle_{L^2(\R^{d+1})} \|_{\ell^{\beta'}} \lesssim \|W_1 \|_{L_t^{\widetilde{q}, 2\beta'} L_x^2} \|W_2 \|_{L_t^{\widetilde{q}, 2\beta'} L_x^2}.
\end{equation}
If $q_0, q_1$ satisfy $2<\widetilde{q_0}<4$ and $2 < \widetilde{q_1} < \infty$, then
Proposition \ref{proposition:UpperHalfC2Cinfty} and Lemma \ref{l:Simon} imply that
\begin{equation}
\label{est:prop3.1-01}
\|\langle W_1 U_{\leq 1} |D|^{- 2s_0 + i \kappa}  U_{\leq 1}^* W_2 \phi_j,  \psi_j \rangle_{L^2(\R^{d+1})} \|_{\ell^{2}}
\lessapprox_\kappa \|W_1 \|_{L_t^{\widetilde{q_0}, 4} L_x^2} \|W_2 \|_{L_t^{\widetilde{q_0}, 4} L_x^2}
\end{equation}
and
\begin{equation}
\label{est:prop3.1-02}
\|\langle  W_1 U_{\leq 1} |D|^{- 2s_1 + i \kappa}  U_{\leq 1}^* W_2 \phi_j,  \psi_j \rangle_{L^2(\R^{d+1})} \|_{\ell^{\infty}}
\lessapprox_\kappa \|W_1 \|_{L_t^{\widetilde{q_1}, \infty} L_x^2} \|W_2 \|_{L_t^{\widetilde{q_1}, \infty} L_x^2}.
\end{equation}
Here, $s_i =\frac{d}{2}-\frac{2}{q_i}$ for $i = 0,1$. We would like to perform an analytic interpolation on these estimates with $\frac{1}{q_1} = \frac{1}{2} - \delta$ and $\frac{1}{q_0} = \frac{1}{4} - \delta$, where $\delta = \frac{1}{2}(\frac{1}{\beta} - \frac{2}{q}) > 0$.

Denote $\mathfrak{S}^{\circ} := \{z \in \Co \, : \, -d < \Re z < 0\}$  and consider the function on the open strip $\mathfrak S^\circ$ given by
\[
F(z)
=
\langle W_1 U_{\leq 1} |D|^z U_{\leq 1}^* W_2 \phi, \psi \rangle_{L_{t,x}^2},
\]
where 
$W_1, W_2 \in L^1(\R^{d+1})$ are simple and 
$\phi, \, \psi \in L^2(\R^{d+1})$ are normalized (i.e. $\|\phi\|_{L^2} = \|\psi\|_{L^2} =1$). Since $-2s_i \in (-d,0)$ for $i = 0,1$, it follows that if we can show that $F$ is analytic on $\mathfrak S^\circ$, then  
the family of bilinear operators $\{T_z\}_{z \in \mathfrak{S}' } : (L_t^{\widetilde{q_0}, 4} L_x^2 \cap L_t^{\widetilde{q_1}, \infty} L_x^2)^2 \to \ell^2$ defined by
\[
T_z(\phi,\psi) = \{\langle W_1 U_{\leq 1} |D|^{z} U_{\leq 1}^* W_2 \phi_j,  \psi_j \rangle_{L^2(\R^{d+1})} \}_j
\]
with $\mathfrak{S}' = \{z \in \mathbb{C} : -2s_0 \leq \Re z \leq -2s_1 \}$ is analytic. Consequently, a bilinear analytic interpolation argument (see, for example, \cite{CJ} or more recent work in \cite{GM14}, \cite{GO22}) with \eqref{est:prop3.1-01} and \eqref{est:prop3.1-02} implies \eqref{est:propSchatten_UpperHalf-1}.

To see that $F$ 
is analytic on $\mathfrak{S}^{\circ}$, we first use Parseval's identity to write
\begin{align*}
F(z)
= \langle  |D|^z U_{\leq 1}^* W_2 \phi, U_{\leq 1}^* \overline{W}_1 \psi \rangle_{L_{x}^2}  \simeq \bigl\langle |\xi|^z \F_{x}\bigl(U_{\leq 1}^* W_2 \phi\bigr), 
\F_{x}\bigl(U_{\leq 1}^* \overline{W}_1 \psi \bigr) \bigr\rangle_{L_{\xi}^2},
\end{align*}
where $\F_{x}$ denotes the (spatial) Fourier transform.
Since $\mathfrak{S}^{\circ}$ is open, for any $z_0 \in \mathfrak{S}^{\circ}$, there exists $0<c \ll 1$ such that
\begin{equation}\label{est:condition_c}
\Re z_0 + d > 3 c, \quad \Re z_0 < - 3c.
\end{equation}
Since $|\xi|^z$ is analytic as a map $\mathfrak{S}^{\circ} \to \Co$ for any $\xi \in \R^d \setminus \{0\}$, by the dominated convergence theorem, it suffices to see that there exists a non-negative function $g$ on $\R^d$ such that
\begin{equation*}\label{est:condition1_g}
\sup_{|h| < c} \bigg| \frac{|\xi|^{z_0+h} - |\xi|^{z_0}}{h} \bigg| \leq g(\xi) \qquad \text{for any $\xi \in \R^d\setminus \{0\}$},
\end{equation*}
and 
\begin{equation}\label{est:condition2_g}
\bigl\langle g \bigl|\F_{x}\bigl(U_{\leq 1}^* W_2 \phi\bigr)\bigr|, 
\bigl|\F_{x}\bigl(U_{\leq 1}^* \overline{W}_1 \psi \bigr) \bigr| \bigr\rangle_{L_{\xi}^2} < \infty.
\end{equation}
In fact, whenever $|h| < c$ we have
\begin{align*}
 \bigl| |\xi|^{z_0 + h} - |\xi|^{z_0} \bigr| 
& \leq |h| |\log |\xi|| \sup_{|z - z_0| < c} |\xi|^{\Re z} \lesssim_c |h| (|\xi|^{-c} + |\xi|^c) \sup_{|z - z_0| < c} |\xi|^{\Re z}
\end{align*}
so, thanks to \eqref{est:condition_c}, it suffices to show \eqref{est:condition2_g} with $g(\xi) = |\xi|^{-2s}$ and $2s = c$ or $2s = d-c$. By the Cauchy--Schwarz inequality and Plancherel's identity, it suffices to show
\[
\| U_{\leq 1}^* |D|^{-s}  W \phi\|_{L_{x}^2} < \infty
\]
for simple functions $W$ and $\phi \in L^2$. However, by \eqref{e:classicalS} we have
\[
\| U_{\leq 1}^* |D|^{-s}  W \phi\|_{L_{x}^2} \lesssim \|W \phi\|_{L_t^1 L_x^{r'}} \leq \|W\|_{L_t^2 L_x^{\frac{d}{s}}} \| \phi\|_{L^2_tL^2_x} < \infty
\]
where $r \in (2,\infty)$ is given by $s = d(\frac{1}{2} - \frac{1}{r})$ (note $\frac{c}{2}$ and $\frac{d-c}{2}$ both belong to $(0,\frac{d}{2})$).
\end{proof}

\section{Proof of Theorem \ref{t:maximalspace} for $4 < q < \infty$} \label{s:lower half}

To complete the proof of Theorem \ref{t:maximalspace}, we consider the remaining cases where $d \geq 1$ and $4<q<\infty$. By duality, our goal is to show \eqref{i:main dual} for $\beta'>\frac{\widetilde{q}}{2}$ and $2<\widetilde{q}<4$. As explained in Section \ref{s:Motivation Outline}, the basic strategy is to interpolate between \eqref{i:C middle} and \eqref{i:C bottom}. The former estimates were proved in Proposition \ref{proposition:UpperHalfC2Cinfty} and next we prove the latter.

\subsection{Proof of \eqref{i:C bottom}}

\begin{proposition}
\label{prop:Proposition_4.1}
Let $d\geq1$, $2<\widetilde{q}<4$ and $s=\frac{d-2}{2}+\frac{2}{\widetilde{q}}$. For $\beta'>\frac{\widetilde{q}}{2}$ we have 
\begin{equation*}
    \|W_1U_{\leq 1}|D|^{-2s+i\kappa}U_{\leq 1}^*W_2\|_{\C^{\beta'}}
\lessapprox_\kappa
    \|W_1\|_{L_t^{\widetilde{q},2}L_x^2}\|W_2\|_{L_t^{\widetilde{q},2}L_x^2}.
\end{equation*}
\end{proposition}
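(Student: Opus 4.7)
The plan is to carry out a frequency-localized decomposition of the operator and combine the resulting estimates via a bilinear real interpolation argument in the spirit of Keel--Tao \cite{KeelTao}. With $P_k$ denoting the Littlewood--Paley frequency projector at scale $2^k$, write
\[
W_1 U_{\leq 1}|D|^{-2s+i\kappa} U_{\leq 1}^* W_2 = \sum_{k \leq C} T_k, \qquad T_k := W_1 U P_k |D|^{-2s+i\kappa} U^* W_2;
\]
only finitely many $k$ contribute since $U_{\leq 1}$ truncates to $|\xi|\lesssim 1$. The operator $U P_k |D|^{-2s+i\kappa} U^*$ is a spacetime convolution with a kernel $K_k(t,x)$, and after rescaling $\xi \mapsto 2^k\eta$ and applying the frequency-local dispersive estimate \eqref{e:dispersiveloc}, one obtains
\[
|K_k(t,x)| \lessapprox_\kappa 2^{(d-2s)k} (1+2^{2k}|t|)^{-d/2}.
\]

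Next I would establish two endpoint Schatten bounds for $T_k$ carrying complementary dyadic factors. A Hilbert--Schmidt bound
\[
\|T_k\|_{\C^2} \lessapprox_\kappa 2^{\theta_0 k}\|W_1\|_{L_t^{\widetilde{q}_0}L_x^2}\|W_2\|_{L_t^{\widetilde{q}_0}L_x^2}
\]
would follow by taking the $L^2$ norm of the kernel $W_1(t,x)K_k(t-t',x-x')W_2(t',x')$, pulling out the $L^\infty_x$ estimate on $K_k$, and applying O'Neil's inequality (Lemma \ref{l:ONeil}) in the time variables (exactly as in the proof of Proposition \ref{proposition:UpperHalfC2Cinfty}, but now tracking the dyadic $k$). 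An operator-norm bound
\[
\|T_k\|_{\C^\infty} \lessapprox_\kappa 2^{\theta_1 k}\|W_1\|_{L_t^{\widetilde{q}_1,\infty}L_x^2}\|W_2\|_{L_t^{\widetilde{q}_1,\infty}L_x^2}
\]
would come from Young's inequality in the spatial convolution (using the $L^1_x$ decay of $K_k$, obtained via integration by parts in the oscillatory integral) combined with Lemma \ref{l:ONeil} in time. Choosing $\widetilde{q}_0 < \widetilde{q} < \widetilde{q}_1$, the scaling relation $s=\frac{d-2}{2}+\tfrac{2}{\widetilde{q}}$ forces $\theta_0>0>\theta_1$, so the two gains are complementary.

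The third step is to combine these two endpoint estimates via bilinear real interpolation à la Keel--Tao. For each $k$, interpolation yields a $\C^{\beta'}$ bound with appropriate Lorentz indices on $W_i$. One then performs an atomic decomposition $W_i = \sum_{n_i} W_{i,n_i}$ in $L_t^{\widetilde{q},2}L_x^2$, matching atomic levels to the dyadic frequency $2^k$: the dyadic factors $2^{\theta_0 k}, 2^{\theta_1 k}$ play against the atomic heights of $W_{i,n_i}$ to produce summable off-diagonal pieces (with exponential decay in $|n_i-k|$), and the resulting $\ell^2$-structure at the atomic level is precisely the source of the Lorentz index $2$ in $L_t^{\widetilde{q},2}$. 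The strict inequality $\beta' > \widetilde{q}/2$ is exactly what is needed so that the interpolation vertex lies strictly inside the open admissible Schatten range.

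The main obstacle, as is customary with endpoint Keel--Tao arguments, is the bilinear interpolation and summation in Step 3: one must carefully align the atomic decomposition of $W_1,W_2$ in Lorentz space with the dyadic frequency decomposition of $|D|^{-2s+i\kappa}$, and verify that the double sum indeed yields exactly the Lorentz norm $L_t^{\widetilde{q},2}L_x^2$ on the right-hand side rather than a weaker Lorentz space such as $L_t^{\widetilde{q},r}$ with $r>2$. The subexponential $\kappa$-dependence is preserved throughout, since both endpoint bounds enjoy it and interpolation does not spoil this feature.
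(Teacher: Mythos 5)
There is a genuine gap. Your two endpoint Schatten bounds are a Hilbert--Schmidt ($\C^2$) bound and an operator-norm ($\C^\infty$) bound, and you propose to interpolate between them. But Schatten interpolation between $\C^2$ and $\C^\infty$ only yields $\C^{\beta'}$ for $\beta' \geq 2$, since $\frac{1}{\beta'} = \frac{1-\theta}{2} + \frac{\theta}{\infty} \leq \frac{1}{2}$. The proposition requires the estimate for all $\beta' > \frac{\widetilde{q}}{2}$ with $\widetilde{q} \in (2,4)$, so $\frac{\widetilde{q}}{2} \in (1,2)$ and the critical, non-trivial range is $\beta' \in (\frac{\widetilde{q}}{2}, 2)$ --- precisely the range your interpolation cannot reach. (Indeed, given a $\C^2$ bound, the $\C^{\beta'}$ bound for $\beta' \geq 2$ is automatic by the nesting $\|\cdot\|_{\C^{\beta'}} \leq \|\cdot\|_{\C^2}$, so your interpolation step produces nothing beyond what the Hilbert--Schmidt estimate alone already gives.)

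The paper instead pairs the $\C^2$ bound (Lemma \ref{l:freqlocal}, estimate \eqref{i:C2 loc}, obtained exactly as you sketch via the dispersive estimate and Young/O'Neil) with a \emph{trace-class} ($\C^1$) bound for the frequency-localized pieces (estimate \eqref{i:C1 loc}). Interpolating $\C^1$ against $\C^2$ reaches $\beta' \in (1,2)$, which is what is required. Crucially, the $\C^1$ bound is not a Young-type convolution estimate: it is proved by reducing to $k=0$ by scaling, passing to $\|\langle \cdot\, \phi_j, \psi_j\rangle\|_{\ell^1}$ via Lemma \ref{l:Simon}, and applying Cauchy--Schwarz sandwiched between two uses of Bessel's inequality, thereby exploiting the orthonormality of the testing families in an essential way. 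This orthogonality input is absent from your proposal and is the key ingredient needed to go below $\C^2$. The remainder of your outline --- the frequency-local dispersive bound, the observation that the two dyadic exponents have opposite signs near the target $\widetilde{q}$, and the Keel--Tao style bilinear real interpolation to produce the Lorentz index $2$ --- is consistent with the paper's Step 2 and Step 3, so the architecture is right; you simply have the wrong lower Schatten endpoint.
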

In order to prove Proposition \ref{prop:Proposition_4.1}, we show the following.
\begin{lemma} \label{l:freqlocal}
Let $s \in \mathbb{R}$, $k \in \mathbb{Z}$. Then 
\begin{align}\label{i:C1 loc}
    \|W_1U_{\leq 1}|D|^{-2s+i\kappa}P_kU_{\leq 1}^*W_2\|_{\C^1}
    & \lesssim
    2^{k(d-2s)}
    \|W_1\|_{L_t^{2}L_x^2}\|W_2\|_{L_t^{2}L_x^2}
\end{align}
and if $\widetilde{q_1},\widetilde{q_2}\in (2,\infty)$ are such that $\frac{1}{\widetilde{q_1}}+\frac{1}{\widetilde{q_2}}>\frac12$, then
\begin{align}\label{i:C2 loc}
    \|W_1U_{\leq 1}|D|^{-2s+i\kappa}P_kU_{\leq 1}^*W_2\|_{\C^2}
    & \lessapprox_\kappa
    2^{k(d-2+\frac{2}{\widetilde{q_1}}+\frac{2}{\widetilde{q_2}}-2s)}
    \|W_1\|_{L_t^{\widetilde{q_1}}L_x^2}\|W_2\|_{L_t^{\widetilde{q_2}}L_x^2}.
\end{align}
\end{lemma}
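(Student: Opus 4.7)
The plan is to prove the two estimates by directly analyzing the integral kernel of the operator $W_1 U_{\leq 1} |D|^{-2s+i\kappa} P_k U_{\leq 1}^* W_2$. The trace-class estimate \eqref{i:C1 loc} will follow from factoring this operator into two Hilbert--Schmidt pieces and using the standard inequality $\|AB\|_{\C^1} \leq \|A\|_{\C^2} \|B\|_{\C^2}$. The Hilbert--Schmidt estimate \eqref{i:C2 loc} will instead come from a pointwise dispersive bound on the kernel followed by a convolution inequality in the time variable; the hypothesis $\frac{1}{\widetilde{q_1}} + \frac{1}{\widetilde{q_2}} > \frac{1}{2}$ will enter precisely at this convolution step.

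For \eqref{i:C1 loc}, choose a fattened Littlewood--Paley cutoff $\widetilde{\varphi}_k$ supported at frequencies $|\xi| \sim 2^k$ with $\widetilde{\varphi}_k \varphi_k = \varphi_k$, and let $\widetilde{P}_k$ denote the corresponding multiplier operator. Writing $|D|^{-2s+i\kappa} P_k = (|D|^{-s+i\kappa} \widetilde{P}_k)(|D|^{-s} P_k)$, one factors
\[
W_1 U_{\leq 1} |D|^{-2s+i\kappa} P_k U_{\leq 1}^* W_2 = A B,
\]
with $A := W_1 U_{\leq 1} |D|^{-s+i\kappa} \widetilde{P}_k$ and $B := |D|^{-s} P_k U_{\leq 1}^* W_2$. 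Viewed as an operator $L^2(\R^d) \to L^2(\R^{d+1})$, $A$ has integral kernel $W_1(t,x) K_A(t, x-y)$ with
\[
K_A(t, z) = \int e^{i(z \cdot \xi + t|\xi|^2)} \chi(|\xi|) \widetilde{\varphi}_k(|\xi|) |\xi|^{-s+i\kappa} \, d\xi.
\]
By Plancherel in $z$, uniformly in $t$,
\[
\int |K_A(t,z)|^2 \, dz = \int \chi(|\xi|)^2 \widetilde{\varphi}_k(|\xi|)^2 |\xi|^{-2s} \, d\xi \lesssim 2^{k(d-2s)},
\]
so $\|A\|_{\C^2}^2 \lesssim 2^{k(d-2s)} \|W_1\|_{L^2}^2$. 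Applying the same computation to $B^* = \overline{W_2} U_{\leq 1} P_k |D|^{-s}$ yields $\|B\|_{\C^2}^2 \lesssim 2^{k(d-2s)} \|W_2\|_{L^2}^2$, and multiplying gives \eqref{i:C1 loc}; the $\kappa$ dependence disappears because $|\xi|^{i\kappa}$ has modulus one and Plancherel sees only the modulus.

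For \eqref{i:C2 loc}, the full operator has kernel $W_1(t,x) W_2(t',x') F_k(t-t', x-x')$ with
\[
F_k(t, z) = \int e^{i(z \cdot \xi + t|\xi|^2)} \chi(|\xi|)^2 \varphi_k(|\xi|) |\xi|^{-2s+i\kappa} \, d\xi.
\]
The rescaling $\xi = 2^k \eta$ rewrites this as $2^{k(d-2s)}$ times an oscillatory integral over $|\eta| \sim 1$ with phase $2^k z \cdot \eta + 2^{2k} t |\eta|^2$ and a $C_0^\infty$ amplitude, so the frequency-localized dispersive estimate \eqref{e:dispersiveloc} (whose constant absorbs the polynomial-in-$\kappa$ growth of derivatives of $|\eta|^{i\kappa} = e^{i\kappa \log |\eta|}$) gives
\[
|F_k(t, z)| \lessapprox_\kappa 2^{k(d-2s)} (1+2^{2k}|t|)^{-d/2}.
\]
Since this bound is independent of $z$, performing the $x, x'$ integrations first yields
\[
\|W_1 U_{\leq 1} |D|^{-2s+i\kappa} P_k U_{\leq 1}^* W_2\|_{\C^2}^2 \lessapprox_\kappa 2^{2k(d-2s)} \int \int \|W_1(t, \cdot)\|_{L^2}^2 \|W_2(t', \cdot)\|_{L^2}^2 K(t-t') \, dt \, dt',
\]
where $K(t) := (1+2^{2k}|t|)^{-d}$. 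Combining H\"older's inequality with Young's convolution inequality, using exponents $\widetilde{q_1}/2$, $\widetilde{q_2}/2$, and $r$ satisfying $\frac{1}{r} = 2 - \frac{2}{\widetilde{q_1}} - \frac{2}{\widetilde{q_2}}$, the right-hand side is controlled by $\|W_1\|_{L_t^{\widetilde{q_1}}L_x^2}^2 \|K\|_{L^r} \|W_2\|_{L_t^{\widetilde{q_2}}L_x^2}^2$, and a direct rescaling gives $\|K\|_{L^r} \sim 2^{-2k/r}$. Taking square roots then produces the desired exponent $2^{k(d-2+\frac{2}{\widetilde{q_1}} + \frac{2}{\widetilde{q_2}} - 2s)}$.

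The main obstacle is verifying admissibility of the Young convolution inequality above. The hypothesis $\frac{1}{\widetilde{q_1}} + \frac{1}{\widetilde{q_2}} > \frac{1}{2}$ is exactly equivalent to $r > 1$: this is automatic when $d \geq 2$ (since $K$ is then integrable), but essential when $d = 1$, because there $K(t) \sim (1+2^{2k}|t|)^{-1}$ barely fails to lie in $L^1$. The secondary concern of keeping the $\kappa$ dependence sub-exponential is routine, since the $n$-th derivative of $|\eta|^{i\kappa}$ on $|\eta| \sim 1$ is $O(|\kappa|^n)$, contributing only polynomial-in-$|\kappa|$ growth to the constant in \eqref{e:dispersiveloc}.
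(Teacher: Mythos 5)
Your proposal is correct, and the second half (the $\C^2$ estimate) is essentially identical to the paper's argument: bound the kernel using the frequency-localized dispersive estimate \eqref{e:dispersiveloc} and then apply Young's convolution inequality in the time variable. The only cosmetic difference is that the paper first reduces to $k=0$ by a rescaling, whereas you carry the factor of $2^k$ through the computation explicitly; the arithmetic and the role of the hypothesis $\frac{1}{\widetilde{q_1}}+\frac{1}{\widetilde{q_2}}>\frac12$ (ensuring $r > 1$ in Young so that the kernel $(1+2^{2k}|t|)^{-d}$ is controlled) match what the paper does.

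For the $\C^1$ estimate, your route is a genuinely cleaner packaging of the same underlying computation. The paper invokes Lemma \ref{l:Simon} to convert the $\C^1$ norm into a $\sup$ over orthonormal families $\phi,\psi$ of the $\ell^1$ norm of matrix elements, rescales to $k=0$, rewrites the matrix elements in terms of translates of fixed $L^2$-normalised profiles $\Phi,\Theta$, and then applies Cauchy--Schwarz sandwiched between two applications of Bessel's inequality. Your argument factorises the operator into a product $AB$ of two Hilbert--Schmidt pieces using the same fattened cutoff $\widetilde{P}_k$ with $\widetilde{P}_k P_k = P_k$, bounds $\|AB\|_{\C^1} \le \|A\|_{\C^2}\|B\|_{\C^2}$, and computes each Hilbert--Schmidt norm directly by Plancherel applied to the kernel. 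These are mathematically equivalent — Bessel's inequality against an orthonormal family is precisely the Hilbert--Schmidt bound that your factorisation makes explicit — but your version makes the structure transparent, avoids the auxiliary reduction via Lemma \ref{l:Simon}, and automatically certifies trace-class membership (since HS times HS is trace class) rather than having to verify compactness separately. One small quibble in your commentary: the hypothesis $\frac{1}{\widetilde{q_1}}+\frac{1}{\widetilde{q_2}}>\frac12$ is an imposed condition in the lemma for every $d$, not something that becomes ``automatic'' for $d\geq 2$ — what is true (as the paper's footnote notes) is that the integrability of $(1+|t|)^{-d}$ for $d\geq 2$ would permit a slightly wider range, which is simply not exploited here.
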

\begin{proof}[Proof of Lemma \ref{l:freqlocal}]
Starting with \eqref{i:C1 loc}, thanks to Lemma \ref{l:Simon}, it is sufficient to check
\begin{align*}
\|\langle W_1U_{\leq 1}|D|^{-2s+i\kappa}P_kU_{\leq 1}^*W_2 \phi_j, \psi_j \rangle_{L^2_{t,x}} \|_{\ell^1}
\lesssim
    2^{k(d-2s)}
    \|W_1\|_{L_t^{2}L_x^2}\|W_2\|_{L_t^{2}L_x^2}
\end{align*}
uniformly in $\phi,\psi \in \mathcal{B}$. In fact, we can reduce to the case $k=0$ since
\begin{align*}
& \|\langle W_1U_{\leq 1}|D|^{-2s+i\kappa}P_kU_{\leq 1}^*W_2 \phi_j, \psi_j \rangle_{L^2_{t,x}} \|_{\ell^1} \\
& = 2^{k(d-2s)} \|\langle W_1^{(k)}U_{\leq 1}|D|^{-2s+i\kappa}P_0U_{\leq 1}^*W_2^{(k)} \phi_j^{(k)}, \psi_j^{(k)} \rangle_{L^2_{t,x}} \|_{\ell^1},
\end{align*}
where $F^{(k)}$ denotes the $L^2$-normalized function given by $F^{(k)}(t,x) = 2^{-k\frac{d+2}{2}} F(\frac{t}{2^{2k}},\frac{x}{2^k})$. So now we fix $k = 0$ and $\phi,\psi \in \mathcal{B}$, and note 
\begin{align*}
\|\langle W_1U_{\leq 1}|D|^{-2s+i\kappa}P_0U_{\leq 1}^*W_2 \phi_j, \psi_j \rangle_{L^2_{t,x}} \|_{\ell^1}
=
\|\langle |D|^{-2s+i\kappa}P_0U_{\leq 1}^*W_2\phi_j,\widetilde{P}_0U_{\leq 1}^*\overline{W}_1\psi_j\rangle_{L_x^2}\|_{\ell^1},
\end{align*}
where $\widetilde{P}_0$ is given by $\mathcal{F}(\widetilde{P}_0f)(\xi) = \vartheta(\xi)\widehat{f}(\xi)$ and the bump function $\vartheta$ is chosen so that $P_0 = \widetilde{P}_0 P_0$. Furthermore
\begin{align*}
|D|^{-2s+i\kappa}P_0U_{\leq 1}^* W_2\phi_j(x) 
& \simeq \langle \phi_j, \overline{W}_2 \tau_x \Phi \rangle_{L^2_{t,x}},
\end{align*}
where $\tau_x \Phi(t',x') := \Phi(t',x-x')$ and 
\[
\Phi(t,x) := \int |\xi|^{-2s + i\kappa} \varphi(\xi)e^{it|\xi|^2} e^{-ix\cdot \xi}  \,\mathrm{d}\xi. 
\]
Similarly, 
\begin{align*}
\widetilde{P}_0U_{\leq 1}^* \overline{W}_1\psi_j(x) & \simeq \langle \psi_j, W_1 \tau_x \Theta \rangle_{L^2_{t,x}},
\end{align*}
where $\Theta(t,x) := \int  \vartheta(\xi)e^{it|\xi|^2} e^{-ix\cdot \xi}  \,\mathrm{d}\xi.
$
In particular, observe that $\|\Phi(t,\cdot)\|_{L^2} \simeq \|\Theta(t,\cdot)\|_{L^2} \simeq 1$. Thus, by the Cauchy--Schwarz inequality (twice) sandwiched by use of Bessel's inequality we obtain
\begin{align*}
\|\langle |D|^{-2s+i\kappa}P_0U_{\leq 1}^*W_2\phi_j,\widetilde{P}_0U_{\leq 1}^*\overline{W}_1\psi_j\rangle_{L_x^2}\|_{\ell^1} &\leq
\int_{\mathbb R^d}
\| W_2 \tau_x\Phi \|_{L^2_{t',x'}}
\| W_1 \tau_x\Theta \|_{L^2_{t',x'}}
\,\d x \\
& \leq 
\bigg(\int_{\mathbb R^d}
\| W_2 \tau_x\Phi \|_{L^2_{t',x'}}^2 \,\d x \bigg)^{\frac{1}{2}}
\bigg(\int_{\mathbb R^d}
\| W_2 \tau_x\Phi \|_{L^2_{t',x'}}^2 \,\d x \bigg)^{\frac{1}{2}} \\
& \simeq \|W_1\|_{L^2_{t',x'}}\|W_2\|_{L^2_{t',x'}}
\end{align*}
as desired.

For \eqref{i:C2 loc}, one may again reduce to the case $k = 0$ by a rescaling argument. When $k = 0$ we argue more or less as in the proof of \eqref{i:C2}. In particular, applying the dispersive estimate \eqref{e:dispersiveloc} (instead of \eqref{e:dispersive}) and the Young convolution inequality, we have
\begin{align*}
    \|W_1U_{\leq 1}|D|^{-2s+i\kappa}P_0U_{\leq 1}^*W_2\|_{\C^2}^2
    & \lessapprox_\kappa
\iint\|W_1(t,\cdot)\|_{L^2}^2\|W_2(t',\cdot)\|_{L^2}^2(1+|t-t'|)^{-d}\,\d t\d t'\\
    & \lesssim
    \|W_1\|_{L^{\widetilde{q_1}}_tL^2_x}^2
    \|W_2\|_{L^{\widetilde{q_2}}_tL^2_x}^2
\end{align*}
whenever $\widetilde{q_1},\widetilde{q_2}\in (2,\infty)$ and\footnote{A slightly larger range of exponents is allowable for $d \geq 2$ but this seems to be of no advantage to us.} $\frac{1}{\widetilde{q_1}}+\frac{1}{\widetilde{q_2}}>\frac12$. 
\end{proof}

\begin{proof}[Proof of Proposition \ref{prop:Proposition_4.1}]
Now fix $\widetilde{q_*}\in(2,4)$ and $\beta'_* \in (1,2)$ satisfying $\beta'_*>\frac{\widetilde{q_*}}{2}$, and set $s_*:=\frac{d-2}{2}+\frac{2}{\widetilde{q_*}}$. Our goal is 
\begin{equation} \label{e:KTgoal}
    \|W_1U_{\leq 1}|D|^{-2s_*+i\kappa}U_{\leq 1}^*W_2\|_{\C^{\beta_*'}}
\lessapprox_\kappa
    \|W_1\|_{L_t^{\widetilde{q}_*,2}L_x^2}\|W_2\|_{L_t^{\widetilde{q}_*,2}L_x^2}.
\end{equation}

By complex interpolation between \eqref{i:C1 loc} and \eqref{i:C2 loc}, it follows that 
\begin{equation}\label{i:Cbeta'}
    \|W_1U_{\leq 1}|D|^{-2s_*+i\kappa}P_kU_{\leq 1}^*W_2\|_{\C^{\beta_*'}}
\lessapprox_\kappa
    2^{-\nu k} 
    \|W_1\|_{L_t^{\widetilde{q_1}}L_x^2}\|W_2\|_{L_t^{\widetilde{q_2}}L_x^2}
\end{equation}
for $(\frac{1}{\widetilde{q_1}},\frac{1}{\widetilde{q_2}}) \in \Delta_*$, where
\begin{align*}
\nu & = \nu(\tfrac{1}{\widetilde{q}_1},\tfrac{1}{\widetilde{q}_2}) := 2\bigg(1 - \frac{1}{\widetilde{q}_1} - \frac{1}{\widetilde{q}_2}\bigg) - d + 2s_* \\
\Delta_* & := \{(c_1,c_2)\in (0,\tfrac{1}{2})^2: c_1 + c_2 >\tfrac{1}{\beta_*'}\}.
\end{align*}
Note that a sufficiently small neighbourhood of $(\frac{1}{\widetilde{q_*}},\frac{1}{\widetilde{q_*}})$ is contained in $\Delta_*$ (see Figure \ref{f:3d}). 
\begin{figure}[t]
\begin{center}
\begin{tikzpicture}[rotate around x=-100, rotate around y=0,rotate around z=-5,scale=5]
\fill [black!20!,opacity=0.5](0,0,0)--(1,0,0)--(1,1,0)--(0,1,0)--(0,0,0);
\draw (0,0,0)--(1,0,0)--(1,1,0)--(0,1,0)--(0,0,0);

\draw (0,0,1)--(1,0,1)--(1,1,1)--(0,1,)--(0,0,1);

\fill [cyan,opacity=0.2](1,0,0)--(1,1,0)--(1,1,1)--(0,1,0)--(1,0,0);

\draw (0,0,0)--(0,0,1);
\node [left] at (0,0,0) {$2$};
\node [left] at (0,0,0.7) {$\frac {\widetilde{q_*}}{2}$};
\node [left] at (0,0,1) {$1$};
\node [left] at (0,0,0.6) {$\beta'_*$};

\draw [dashed](0,1,0)--(1,1,1);
\draw [dotted, thick](0,0,0)--(1,1,0);

\node [below] at (0,0,0) {$O$};
\node [below] at (1,0,0) {$\frac12$};
\node [above left] at (0,1,0) {$\frac12$};

\fill (1,1,1) circle (0.4pt);
\draw [dashed](1,0,0)--(0,1,0);
\draw [dashed](1,1,0)--(1,1,1);

\path [name path=x] (1,0,0)--(1,1,1);
\path [name path=y] (0,1,0)--(1,1,1);
\path [name path=midx] (1,0,0.7)--(1,1,0.7);
\path [name path=midy] (0,1,0.7)--(1,1,0.7);
\path [name intersections={of=x and midx,by={X}}];
\path [name intersections={of=y and midy,by={Y}}];
\path [name path=XY] (X)--(Y);
\path [name path=semidiag] (1/2,1/2,0)--(1,1,1);
\path [name intersections={of=semidiag and XY,by={M}}];

\path [name path=midx2] (1,0,0.6)--(1,1,0.6);
\path [name path=midy2] (0,1,0.6)--(1,1,0.6);
\path [name intersections={of=x and midx2,by={X2}}];
\path [name intersections={of=y and midy2,by={Y2}}];


\draw [dashed](1/2,1/2,0)--(M);

\fill [magenta!40!,opacity=0.8](0,0,0.6)--(1,0,0.6)--(1,1,0.6)--(0,1,0.6)--(0,0,0.6);
\draw [](0,0,0.6)--(1,0,0.6)--(1,1,0.6)--(0,1,0.6)--(0,0,0.6);

\fill [violet!80!,opacity=0.5] (X2)--(Y2)--(1,1,0.6)--(X2);

\draw [->] (1.8,0,0)--(1.8+0.3,0,0);
\node [right] at (1.8+0.3,0,0) {$\frac{1}{\widetilde{q_1}}$};
\draw [->] (1.8,0,0)--(1.8,0.3,0);
\node [above] at (1.8,0.3,0) {$\frac{1}{\widetilde{q_2}}$};
\draw [->] (1.8,0,0)--(1.8,0,0.3);
\node [above] at (1.8,0,0.3) {$\beta'$};

\coordinate (O) at (0,0,0);

\path [name path=sdx] (1,1,1-0.1)--(1,0+0.1,0);
\path [name intersections={of=sdx and midx2, by={X3}}];
\path [name path=sdy] (1,1,1-0.1)--(0+0.1,1,0);
\path [name intersections={of=sdy and midy2, by={Y3}}];
\path [name path=md] (0,0,0.6)--(1,1,0.6);
\path [name path=XY3] (X3)--(Y3);
\path [name intersections={of=md and XY3, by={M3}}];
\path [name intersections={of=md and semidiag, by={M2}}];

\draw [dashed] (1,1,1)--(M2);

\coordinate (M4) at (0.85,0.85);
\fill (M4) circle (0.4pt);

\draw [] (M)--(M4);
\draw [dotted,thick] (0.85,0,0)--(M4);
\draw [dotted,thick] (0,0.85,0)--(M4);
\node [below] at (0.8,0,0) {$\frac{1}{\widetilde{q_*}}$};
\node [above left] at (0,0.8,0) {$\frac{1}{\widetilde{q_*}}$};

\draw [](0,0,0.7)--(1,0,0.7)--(1,1,0.7)--(0,1,0.7)--(0,0,0.7); 

\fill  (M) circle (0.4pt);

\fill [violet!80!,opacity=0.5] (X)--(Y)--(1,1,0.7)--(X);

\draw [dashed](1,0,0)--(1,1,1);

\fill [yellow](M3) circle (0.4pt);

\end{tikzpicture}
\caption{
For a fixed $\widetilde{q}_*$, the intersection between the boundary $\frac{1}{\widetilde{q_1}}+\frac{1}{\widetilde{q_2}}=\frac{2}{\widetilde{q}_*}$ and the diagonal line $\frac{1}{\widetilde{q_1}}=\frac{1}{\widetilde{q_2}}$ is contained in the interior $\Delta_{\beta_*'}$ lying on the layer underneath.
}\label{f:3d}
\end{center}
\end{figure}
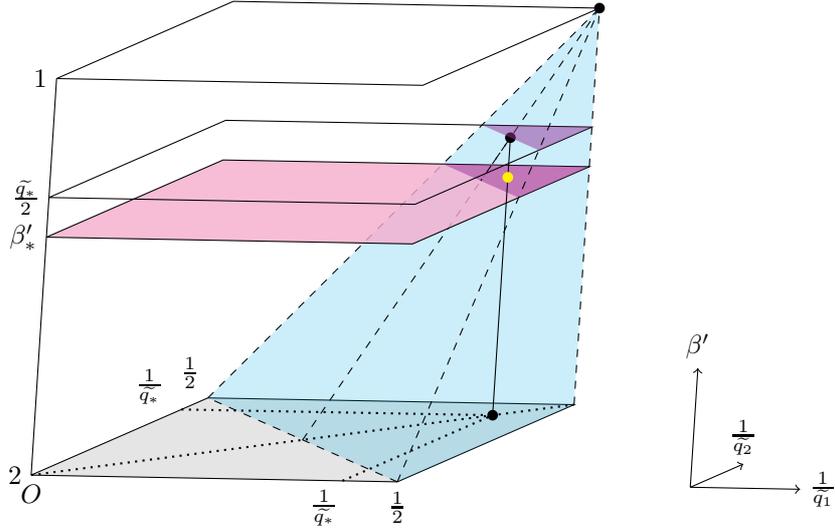
To sum up the estimates in \eqref{i:Cbeta'} to obtain \eqref{e:KTgoal}, we perform a bilinear interpolation argument in $\Delta_*$ (in the spirit of Keel--Tao \cite{KeelTao}). For this, we note that \eqref{i:Cbeta'} may be reinterpreted as 
\[
\|(W_1U_{\leq 1}|D|^{-2s_*+i\kappa}P_kU_{\leq 1}^*W_2)_k\|_{\ell_{\nu}^\infty(\C^{\beta_*'})}
\lessapprox_\kappa
\|W_1\|_{L_t^{\widetilde{q_1}}L_x^2}\|W_2\|_{L_t^{\widetilde{q_2}}L_x^2}, \qquad \nu = \nu(\tfrac{1}{\widetilde{q}_1},\tfrac{1}{\widetilde{q}_2})
\]
where $\ell^\infty_\nu(\C^{\beta_*'})$ is the weighted sequence space (of operators) with norm
\[
\|(T_k)_k\|_{\ell_{\nu}^\infty(\C^{\beta_*'})}
:= \sup_{k}2^{k\nu}\|T_k\|_{\C^{\beta_*'}}.
\]
Setting $T := (W_1U_{\leq 1}|D|^{-2s_*+i\kappa}P_kU_{\leq 1}^*W_2)_k$, we see that, in particular, $T$ is a bilinear operator which is bounded as follows:
\[
\begin{cases}
T:X_0\times X_0\to Y_0\\
T:X_0\times X_1\to Y_1\\
T:X_1\times X_0\to Y_1.
\end{cases}
\]
Here, $X_i := L^{a_i}_tL^2_x$ and $Y_i := \ell^\infty_{\nu_i}$ for $i=0,1$, and $\frac{1}{a_0}=\frac{1}{\widetilde{q}_*}+\delta$, $\frac{1}{a_1}=\frac{1}{\widetilde{q}_*}-2\delta$, $\nu_i := \nu(\tfrac{1}{a_0},\tfrac{1}{a_i})$. Also, $\delta > 0$ is chosen to be sufficiently small so that $(\tfrac{1}{a_j},\tfrac{1}{a_i}) \in \Delta_*$ for $(i,j) = (0,0), (1,0), (0,1)$ (see Figure \ref{f:nbh}). 
\begin{figure}[t]
\begin{center}
\begin{tikzpicture}[scale=8]
\fill [magenta, opacity=0.3] (0,0)--(1,0)--(1,1)--(0,1)--(0,0);
\draw (0,0)--(1,0)--(1,1)--(0,1)--(0,0);
\draw [dashed] (0,0)--(1,1);
\draw [dashed] (1,0)--(0,1);
\node [below left] at (0,0) {$O$};
\node [right] at (1,0) {$\frac{1}{\widetilde{q_1}}$};
\node [above] at (0,1) {$\frac{1}{\widetilde{q_2}}$};

\coordinate (Q) at (0.8,0.8);
\coordinate (A) at ([xshift=0.06cm,yshift=-0.12cm]Q);
\coordinate (B) at ([xshift=0.06cm,yshift=0.06cm]Q);
\coordinate (C) at ([xshift=-0.12cm,yshift=0.06cm]Q);


\coordinate (yC) at (O|-C);
\coordinate (yQ) at (O|-Q);
\coordinate (yA) at (O|-A);
\coordinate (xQ) at (Q|-O);

\fill [violet!70!, opacity=0.7] (1,1)--([xshift=-0.5cm,yshift=1cm]xQ)--([xshift=1cm,yshift=-0.5cm]yQ)--(1,1);

\fill [yellow](Q) circle (0.3pt);
\fill (A) circle (0.3pt);
\fill (B) circle (0.3pt);
\fill (C) circle (0.3pt);

\draw (A)--(B)--(C)--(A);
\draw [dotted] (A)--(A|-O);
\draw [dotted] ([xshift=1.1cm]O|-A)--(O|-A);
\draw [dotted] (C)--(C|-O);
\draw [dotted] ([xshift=1.1cm]O|-C)--(O|-C);
\draw [dotted] (Q)--(Q|-O);
\draw [dotted] (Q)--([xshift=1.1cm]O|-Q);

\node [below]at (Q|-O) {$\frac{1}{\widetilde{q_*}}$};
\node [below]at (A|-O) {$\frac{1}{a_0}$};
\node [left]at (O|-A) {$\frac{1}{a_1}$};
\node [below]at (C|-O) {$\frac{1}{a_1}$};
\node [left]at (O|-C) {$\frac{1}{a_0}$};

\node [below] at (1,0) {$\frac12$};
\node [left] at (0,1) {$\frac12$};

\draw[<->] ([xshift=1.1cm]yC)--([xshift=1.1cm]yQ);
\draw[<->] ([xshift=1.1cm]yA)--([xshift=1.1cm]yQ);
\node [above right] at ([xshift=1.1cm]yQ) {$\delta$};
\node [below right] at ([xshift=1.1cm]yQ) {$2\delta$};


\end{tikzpicture}
\caption{
The $(\frac{1}{\widetilde{q_1}},\frac{1}{\widetilde{q_2}})$-plane at height $\beta'=\beta_*'$.
}\label{f:nbh}
\end{center}
\end{figure}
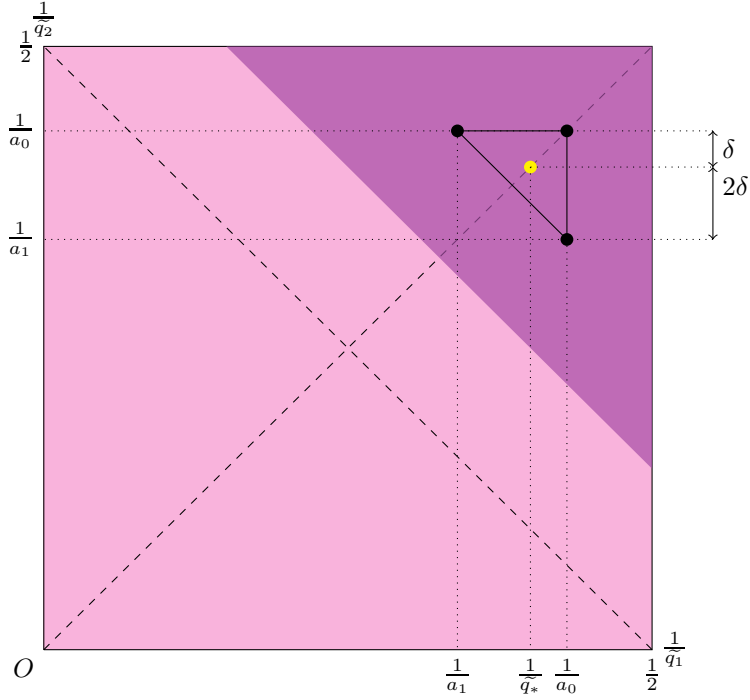
By a bilinear interpolation argument (see \cite[Exercise 5 (b)]{BL76}) it follows that $T$ is bounded as mapping
\[
T :(L_t^{a_0}L_x^2, L_t^{a_1}L_x^2)_{\frac{1}{3},2} \times(L_t^{a_0}L_x^2, L_t^{a_1}L_x^2)_{\frac{1}{3},2}
\to
(\ell_{\nu_0}^\infty(\C^{\beta_*'}),\ell_{\nu_1}^\infty(\C^{\beta_*'}))_{\frac{2}{3},1},
\]
which is equivalent to 
\[
T:L_t^{\widetilde{q_*},2}L_x^2\times L_t^{\widetilde{q_*},2}L_x^2\to\ell_0^1(\C^{\beta_*'}).
\]
Hence,
\[
\|W_1U_{\leq 1}|D|^{-2s_*+i\kappa}U_{\leq 1}^*W_2\|_{\C^{\beta_*'}}
\leq
\sum_{k}\|W_1U_{\leq 1}|D|^{-2s+i\kappa}P_kU_{\leq 1}^* W_2\|_{\C^{\beta_*'}}
\lessapprox_\kappa
\|W_1\|_{L_t^{\widetilde{q_*},2}L_x^2}\|W_2\|_{L_t^{\widetilde{q_*},2}L_x^2}
\]
which is \eqref{e:KTgoal}.

\end{proof}

\subsection{Proof of \eqref{i:main dual} for $4 < q < \infty$}

\begin{proposition}\label{proposition:Schatten_LowerHalf}
Let $d \geq 1$, $2< \widetilde{q} < 4$ and $s= \frac{d-2}{2} + \frac{2}{\widetilde{q}}$. For $\beta' > \frac{\widetilde{q}}{2}$, we have
\begin{equation}\label{est:goal-prop4.1}
\| W_1 U_{\leq 1} |D|^{- 2s} U_{\leq 1}^*W_2 \|_{\mathcal{C}^{\beta'}}
\lesssim 
\|W_1 \|_{L_t^{\widetilde{q}} L_x^2} \|W_2 \|_{L_t^{\widetilde{q}} L_x^2}.
\end{equation}
\end{proposition}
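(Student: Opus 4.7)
The plan is to prove Proposition~\ref{proposition:Schatten_LowerHalf} by bilinear analytic interpolation, mirroring the proof of Proposition~\ref{proposition:Schatten_UpperHalf} with Proposition~\ref{prop:Proposition_4.1} playing the role played there by the $\mathcal{C}^\infty$ bound \eqref{i:Cinfty}. Applying Lemma~\ref{l:Simon} as a first step, the target Schatten bound is equivalent to showing
\[
\bigl\| \langle W_1 U_{\leq 1} |D|^{-2s} U_{\leq 1}^* W_2 \phi_j, \psi_j \rangle_{L^2(\mathbb{R}^{d+1})} \bigr\|_{\ell^{\beta'}} \lesssim \|W_1\|_{L_t^{\widetilde{q}}L_x^2}\|W_2\|_{L_t^{\widetilde{q}}L_x^2}
\]
uniformly over orthonormal families $\phi, \psi \in \mathcal{B}$. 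Fixing such $\phi, \psi$, consider the analytic family of bilinear operators
\[
T_z(W_1, W_2) = \bigl\{ \langle W_1 U_{\leq 1} |D|^z U_{\leq 1}^* W_2 \phi_j, \psi_j \rangle_{L^2}\bigr\}_j
\]
for $z$ in the open strip $\mathfrak{S}^\circ = \{z \in \mathbb{C} : -d < \Re z < 0\}$. Analyticity on $\mathfrak{S}^\circ$ can be verified by a dominated-convergence argument essentially identical to the one used in the proof of Proposition~\ref{proposition:Schatten_UpperHalf}.

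For parameters $\widetilde{q}_0, \widetilde{q}_1 \in (2,4)$ with $\widetilde{q}_0 \neq \widetilde{q}_1$ and $\beta'_1 > \widetilde{q}_1/2$, write $s_i = \tfrac{d-2}{2} + \tfrac{2}{\widetilde{q}_i}$ for $i=0,1$. Proposition~\ref{proposition:UpperHalfC2Cinfty}\eqref{i:C2} combined with Lemma~\ref{l:Simon} gives the endpoint estimate
\[
\|T_{-2s_0 + i\kappa}(W_1, W_2)\|_{\ell^2} \lessapprox_\kappa \|W_1\|_{L_t^{\widetilde{q}_0,4}L_x^2}\|W_2\|_{L_t^{\widetilde{q}_0,4}L_x^2},
\]
while Proposition~\ref{prop:Proposition_4.1} with Lemma~\ref{l:Simon} provides
\[
\|T_{-2s_1 + i\kappa}(W_1, W_2)\|_{\ell^{\beta'_1}} \lessapprox_\kappa \|W_1\|_{L_t^{\widetilde{q}_1,2}L_x^2}\|W_2\|_{L_t^{\widetilde{q}_1,2}L_x^2}.
\]
Bilinear Stein interpolation at $\theta \in (0,1)$ then yields
\[
\|T_{-2s + i\kappa}(W_1, W_2)\|_{\ell^{\beta'(\theta)}} \lessapprox_\kappa \|W_1\|_{L_t^{\widetilde{q}(\theta),a(\theta)}L_x^2}\|W_2\|_{L_t^{\widetilde{q}(\theta),a(\theta)}L_x^2},
\]
where $\tfrac{1}{\beta'(\theta)} = \tfrac{1-\theta}{2} + \tfrac{\theta}{\beta'_1}$, $\tfrac{1}{\widetilde{q}(\theta)} = \tfrac{1-\theta}{\widetilde{q}_0} + \tfrac{\theta}{\widetilde{q}_1}$, and $\tfrac{1}{a(\theta)} = \tfrac{1-\theta}{4} + \tfrac{\theta}{2}$.

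Setting $\kappa = 0$ and tuning $(\theta, \widetilde{q}_0, \widetilde{q}_1, \beta'_1)$ so that $\widetilde{q}(\theta) = \widetilde{q}$ (so the target power $-2s$ is reached), $\beta'(\theta) = \beta'$, and $a(\theta) \geq \widetilde{q}$, the Lorentz embedding $\|W\|_{L_t^{\widetilde{q},a(\theta)}} \leq c\|W\|_{L_t^{\widetilde{q}}}$ (valid since the Lorentz second index is decreasing in its value) upgrades the interpolated bound to the desired $L_t^{\widetilde{q}} L_x^2$ form. The main obstacle is verifying that such a consistent choice of parameters is possible for every $\beta' > \widetilde{q}/2$: the constraint $a(\theta) \geq \widetilde{q}$ forces $\theta \leq (4-\widetilde{q})/\widetilde{q}$, and a direct computation shows that the scheme above covers the sub-range $\beta' \in (\widetilde{q}/2,\, \widetilde{q}/(\widetilde{q}-2))$. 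For the complementary range $\beta' \geq 2$, one instead interpolates directly against Proposition~\ref{proposition:UpperHalfC2Cinfty}\eqref{i:Cinfty} exactly as in Proposition~\ref{proposition:Schatten_UpperHalf}; since $2 < \widetilde{q}/(\widetilde{q}-2)$ for $\widetilde{q} \in (2,4)$, these two schemes together cover the entire range $\beta' > \widetilde{q}/2$.
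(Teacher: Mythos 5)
Your proposal is correct and follows essentially the same route as the paper: reduce to $\ell^{\beta'}$ estimates via Lemma~\ref{l:Simon}, set up an analytic family of bilinear operators, and perform bilinear analytic (Stein) interpolation in the Lorentz-space framework between the $\mathcal{C}^2$ estimate \eqref{i:C2} (with $L_t^{\widetilde{q}_0,4}$ input) and the $\mathcal{C}^{\beta_1'}$ estimate from Proposition~\ref{prop:Proposition_4.1} (with $L_t^{\widetilde{q}_1,2}$ input), using the Lorentz gain to upgrade the restricted weak-type to strong-type. The parameter bookkeeping you describe is verifiable; in particular, your constraint $\theta \leq (4-\widetilde{q})/\widetilde{q}$ and the resulting reachable range $\beta' < \widetilde{q}/(\widetilde{q}-2)$ are correct, and the complementary case $\beta' \geq 2$ via interpolation against \eqref{i:Cinfty} does close the gap.

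The main difference from the paper is the handling of the full range of $\beta'$. You run two interpolation schemes and argue that their union covers $(\widetilde{q}/2,\infty)$. The paper instead makes the single observation that Schatten spaces are nested ($\|T\|_{\mathcal{C}^{\beta_2'}} \leq \|T\|_{\mathcal{C}^{\beta_1'}}$ for $\beta_1' \leq \beta_2'$), so it suffices to treat $\beta'$ sufficiently close to $\widetilde{q}/2$; this eliminates your second scheme entirely. The paper then picks exact parameters ($\frac{1}{\widetilde q_0}=\frac14+\lambda_0\varepsilon$, $\frac{1}{\widetilde q_1}=\frac12-\lambda_1\varepsilon$, $\theta = 1-\frac{4}{q}$) which make the interpolated Lorentz second index come out to exactly $\widetilde{q}$, so the interpolated estimate \emph{is} the $L_t^{\widetilde{q}}L_x^2$ estimate, with no slack or embedding step. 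Your version uses the inequality $a(\theta)\geq \widetilde{q}$ plus the Lorentz embedding, which works but is less tight. One small loose end in your write-up: you assert that a ``direct computation shows'' the first scheme covers $\beta' \in (\widetilde{q}/2, \widetilde{q}/(\widetilde{q}-2))$ without recording the computation; while I have checked this is indeed correct, a complete proof should spell out a consistent choice of $(\theta,\widetilde{q}_0,\widetilde{q}_1,\beta_1')$, as the paper does.
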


\begin{proof}
Suppose $\widetilde{q} \in (2,4)$, $s= \frac{d-2}{2} + \frac{2}{\widetilde{q}}$, and $\beta' > \frac{\widetilde{q}}{2}$. We set $\varepsilon := \frac{2}{\widetilde{q}} - \frac{1}{\beta'}$ and observe that it suffices to prove \eqref{est:goal-prop4.1} with $\beta'$ sufficiently close to $\frac{\widetilde{q}}{2}$ (i.e. $\varepsilon > 0$ sufficiently small). Also, we fix
$\phi, \psi \in \mathcal{B}$ and note that, from Lemma \ref{l:Simon}, it is enough to prove 
\begin{equation}\label{est:propSchatten_LowerHalf-1}
\|\langle W_1 U_{\leq 1} |D|^{- 2s} U_{\leq 1}^* \overline{W}_2 \phi_j,  \psi_j \rangle_{L^2(\R^{d+1})} \|_{\ell^{\beta'}} \lesssim \|W_1 \|_{L_t^{\widetilde{q}} L_x^2} \|W_2 \|_{L_t^{\widetilde{q}} L_x^2}.
\end{equation}
For this, we shall see that one can upgrade the estimates from Proposition \ref{prop:Proposition_4.1} by making use of the Lorentz space improvement in \eqref{est:prop3.1-01} and analytic interpolation.

Let $\widetilde{q}_0, \widetilde{q}_1$ be given by
\[
\frac{1}{\widetilde{q}_0} = \frac{1}{4} + \lambda_0 \varepsilon, \quad \frac{1}{\widetilde{q}_1} = \frac{1}{2} - \lambda_1\varepsilon
\]
where $\lambda_0 = (\frac{q}{4} - 1)\lambda_1$ and $\lambda_1 = (4 - \frac{16}{q})^{-1}$. Choosing $\varepsilon$ sufficiently small guarantees that $\widetilde{q}_0, \widetilde{q}_1 \in (2,4)$. Also, let $\beta_1 = \varepsilon^{-1}(1 - \frac{4}{q})$ and note that the choice of $\lambda_1$ ensures that $\beta_1' > \frac{\widetilde{q}_1}{2}$. Thanks to these choices, \eqref{est:prop3.1-01} implies
\begin{equation*}
\|\langle W_1 U_{\leq 1} |D|^{- 2s_0 + i \kappa}  U_{\leq 1}^* W_2 \phi_j,  \psi_j \rangle_{L^2(\R^{d+1})} \|_{\ell^{2}}
\lessapprox_\kappa \|W_1 \|_{L_t^{\widetilde{q_0}, 4} L_x^2} \|W_2 \|_{L_t^{\widetilde{q_0}, 4} L_x^2}
\end{equation*}
and Proposition~\ref{prop:Proposition_4.1} and Lemma \ref{l:Simon} imply
\begin{equation}\label{est:prop4.6-02}
\|\langle W_1 U_{\leq 1} |D|^{- 2s_1 + i \kappa} U_{\leq 1}^* W_2 \phi_j,  \psi_j \rangle_{L^2(\R^{d+1})} \|_{\ell^{\beta'_1}} \lessapprox_\kappa \|W_1 \|_{L_t^{\widetilde{q_1},2} L_x^2} \|W_2 \|_{L_t^{\widetilde{q_1},2} L_x^2}.
\end{equation}
Finally, we let $\theta \in (0,1)$ be given by $\theta = 1 - \frac{4}{q}$. By analytic interpolation (as in the proof of Proposition \ref{proposition:Schatten_UpperHalf}) and the choices of $q_0$ and $q_1$ above, one can check that \eqref{est:propSchatten_LowerHalf-1} now follows.
\end{proof}

\section{On the pointwise convergence problem}\label{s:PW}
In this section we consider (local) maximal-in-time estimates (Theorem \ref{t:maximal main}) and applications to the associated pointwise convergence problem (Corollary \ref{c:div set}). 
First of all, we give a precise definition of the density function of $\gamma \in \mathcal{C}^{\beta,s}$. 
Here, for $d\geq 1$, $s \geq 0$, 
\[
\mathcal{C}^{\beta,s} =
\{
\gamma \in \mathrm{Com}(\dot{H}^{-s} (\R^d), \dot{H}^{s} (\R^d)) : \| |D|^s \gamma |D|^s \|_{\C^{\beta}(L^2(\R^d))}<\infty
\},
\]
where $\mathrm{Com}(\dot{H}^{-s} (\R^d), \dot{H}^{s} (\R^d))$ denotes the set of compact operators from $\dot{H}^{-s}(\R^{d})$ to $\dot{H}^{s}(\R^{d})$. Using a finite-rank approximation, if $\gamma \in \mathcal{C}^{\beta,s}$ is self-adjoint then there exist orthonormal functions $(f_j )_j \subset L^2(\R^d)$ and $(\lambda_j)_j \in \ell^{\beta}$ such that
\[
\gamma^N := \sum_{j=1}^N \lambda_j |D|^{-s} \Pi_{f_j} |D|^{-s}, \quad \lim_{N \to \infty} \||D|^s(\gamma^N - \gamma)|D|^s \|_{\C^{\beta}}=0. 
\]
The density function of $\gamma^N$ is defined by
\[
\rho_{\gamma^N}(x) = \sum_{j=1}^N \lambda_j | |D|^{-s} f_j (x)|^2.
\]
We define the density function of $\gamma \in \mathcal{C}^{\beta,s}$ as a limit of $\rho_{\gamma^N}$ in the following manner. We claim that if $s > \frac{d}{2} - \frac{\alpha}{2 \beta}$, the density function $\rho_{\gamma}$ is well-defined in $L^1(\d \mu)$ for each $\alpha$-dimensional measure $\mu$. For this, it suffices to see
\begin{equation}
    \label{est:DensityWell-defined}
    \bigg\| \sum_{j} \lambda_j | |D|^{-s} f_j |^2 \bigg\|_{L^1(\d \mu)} \lesssim \|\lambda \|_{\ell^{\beta}}
\end{equation}
for all orthonormal functions $(f_j)_{j}$ in $L^2(\R^d)$ and $(\lambda_j)_j\in\ell^\beta$ since the estimate tells that $(\rho_{\gamma^N})_N$ is a Cauchy sequence in $L^1(\d \mu)$, and we may take $\rho_{\gamma} \in L^1(\d \mu)$ as the limit of this sequence. 

In order to verify \eqref{est:DensityWell-defined}, we employ the following two estimates:
\begin{equation}
\label{est:DensityWell-defined1}
    \bigg\| \sum_{j} \lambda_j | P_k f_j |^2 \bigg\|_{L^{\infty}(\d \mu)} \lesssim 2^{dk} \|\lambda \|_{\ell^{\infty}},
\end{equation}
and for $s>\frac{d}{2}- \frac{\alpha}{2}$, 
\begin{equation}
\label{est:DensityWell-defined2}
    \bigg\| \sum_{j} \lambda_j | P_k f_j |^2 \bigg\|_{L^{1}(\d \mu)} \lesssim 2^{2 s k} \|\lambda \|_{\ell^{1}}.
\end{equation}
For the proof of \eqref{est:DensityWell-defined1}, we refer the reader forward to the proof of \eqref{e:C1k}. 
For the second estimate, we note that Barcel\'o \emph{et al}. \cite[Appendix A]{BBCR} obtained 
\[
\bigl\| \sup_k |P_k g| \bigr\|_{L^2(\d \mu)} \lesssim \| g \|_{H^s(\R^d)}
\]
if $s >\frac{d}{2} - \frac{\alpha}{2}$, and this clearly implies \eqref{est:DensityWell-defined2} in the case $k\geq 0$. 
For $k \leq 0$, the inequality 
\begin{align*}
\sup_{x}|P_k g(x)| &
= \sup_{x} |((\F_{\xi}^{-1} \varphi_k) * g)(x)| \\
& \lesssim \|\varphi_k\|_{L^2} \|g\|_{L^2} \sim 2^{\frac{d}{2}k} \|g\|_{L^2}
\end{align*}
implies $\|P_k g\|_{L^2(\mathrm{d}\mu)} \lesssim 2^{\frac{d}{2}k} \|g\|_{L^2}$. Here, we use the notation $A \sim B$ when both $A \lesssim B$ and $B \lesssim A$ hold. From the above, we see that \eqref{est:DensityWell-defined2} holds in the case $k< 0$. 
Finally, by using Lemma~\ref{l:Bourgain's trick} together with \eqref{est:DensityWell-defined1} and \eqref{est:DensityWell-defined2}, we conclude \eqref{est:DensityWell-defined} for $s > \frac{d}{2} - \frac{\alpha}{2 \beta}$.

\subsection{Proof of Corollary \ref{c:div set}}
Corollary \ref{c:div set} may be deduced from the maximal-in-time estimates in Theorem \ref{t:maximal main} using well-established arguments (for example, \cite{BBCR, CL}). For the sake of completeness, we include a sketch for the case $m \in (1,\infty)$ (the case $m \in (0,1)$ can be handled in a similar manner).

First we note that an argument based on Frostman's lemma from geometric measure theory (see for example \cite{BBCR, CL}) means that the divergence set bound $\dim_H\mathfrak D(\gamma_0) \leq (d-2s)\beta$ follows if we can show that
\begin{equation} \label{e:mupointwise}
\lim_{t\to0} \rho_{\gamma(t)}(x)
=
\rho_{\gamma_0}(x)\quad \text{$\mu$-a.e. $x \in \mathbb{B}^d$}
\end{equation}
holds whenever $\mu \in \mathcal{M}^\alpha(\mathbb{B}^d)$ and $\alpha > (d-2s)\beta$.

Take $\gamma_0 \in \mathcal{C}^{\beta,s}$ to be self-adjoint and $\mu \in \mathcal{M}^\alpha(\mathbb{B}^d)$, where $s \in [\frac{d}{4},\frac{d}{2})$ and $\beta \in [1,\frac{\alpha}{d - 2s})$. Let $\gamma_0^N$ be defined as above, so that 
\begin{equation}
    \label{est:DensityConvergence}
  \lim_{N \to \infty}  \| \rho_{\gamma_0} - \rho_{\gamma_0^N} \|_{L^1(\d \mu)} =0.
\end{equation}
For each $t \in \mathbb{R}$, if we set $\gamma(t) = e^{it (-\Delta)^{m/2}} \gamma_0 e^{- it (-\Delta)^{m/2}}$ and $\gamma^N (t) = e^{it (-\Delta)^{m/2}} \gamma_0^N e^{- it (-\Delta)^{m/2}}$, then the fact that $e^{it (-\Delta)^{m/2}}$ is unitary means that $\rho_{\gamma(t)}$ is well-defined in $L^1(\d \mu)$ in the same manner.

In order to prove \eqref{e:mupointwise}, we first note that this holds in the finite-rank case; that is, for each fixed $N \in \mathbb{N}$, we have $\lim_{t \to 0} \rho_{\gamma^N(t)}(x) = \rho_{\gamma_0^N}(x)$ ($\mu$-a.e. $x \in \mathbb{B}^d$). Indeed, since we may write
\[
\rho_{\gamma^N(t)}(x) = \sum_{j=1}^N \lambda_j |e^{-it(-\Delta)^{m/2}}g_j (x)|^2
\]
for a certain orthonormal family $(g_j)_j$ in $\dot{H}^s(\mathbb{R}^d)$, the claim holds if $\lim_{t \to 0} e^{it(-\Delta)^{m/2}} f(x) = f(x)$ ($\mu$-a.e. $x \in \mathbb{B}^d$) whenever $f \in \dot{H}^s(\mathbb{R}^d)$. By standard arguments, this follows from the maximal estimate\footnote{Although this estimate may be known in certain cases, we note that it follows from \eqref{maximal}. However, we do not need the full power of \eqref{maximal}, and the special case $\beta = 1$ suffices.}
\begin{equation} \label{e:singlefunctionmaximal}
\|U_m f\|_{L_x^2(\mathbb B^d,\d\mu)L_t^\infty(0,1)} \lesssim \|f\|_{\dot{H}^s(\mathbb{R}^d)}.
\end{equation}

In order to extend to the infinite-rank case, we use Theorem \ref{t:maximal main}. It suffices to prove
\begin{equation} \label{e:oneoverk}
\mu(\{x\in \mathbb{B}^d:\limsup_{t\to 0}|\rho_{\gamma(t)}(x)-\rho_{\gamma_0}(x)|> k^{-1}\}) = 0
\end{equation}
for each integer $k \geq 1$, and to see this we fix $\varepsilon > 0$ and note
\begin{align*}
    &
    \mu(\{x\in \mathbb{B}^d:\limsup_{t\to 0}|\rho_{\gamma(t)}(x)-\rho_{\gamma_0}(x)|>k^{-1}\})\\
    &\leq
    \mu(\{x\in\mathbb{B}^d:\sup_{t \in (-1,1)}|\rho_{\gamma(t)}(x)-\rho_{\gamma^{N}(t)}(x)|>(3k)^{-1}\})\\
    &\quad+
    \mu(\{x\in\mathbb{B}^d:\limsup_{t\to 0}|\rho_{\gamma^{N}(t)}(x)-\rho_{\gamma_0^{N}}(x)|>(3k)^{-1}\})\\
    &\qquad+
    \mu(\{x\in\mathbb{B}^d: |\rho_{\gamma_0^{N}}(x)-\rho_{\gamma_0}(x)|>(3k)^{-1}\}) =: M_1 + M_2 + M_3,
\end{align*}
where $N$ is to be chosen momentarily. For $M_3$, by Chebyshev's inequality and \eqref{est:DensityConvergence}, we have
\[
M_3 \leq 3k \| \rho_{\gamma_0^{N}}-\rho_{\gamma_0}\|_{L_x^{1}(\d\mu)} < \varepsilon
\]
if we take $N = N(\varepsilon,k)$ sufficiently large. For $M_1$, we use Chebyshev's inequality and Theorem \ref{t:maximal main} to estimate
\[
M_1 \leq 3k \|\rho_{\gamma(t)}-\rho_{\gamma^{N_\varepsilon}(t)}\|_{L_x^{1}(\d\mu)L_t^\infty} \lesssim 3k \bigg(\sum_{j > N} |\lambda_j|^\beta \bigg)^{\frac{1}{\beta}} < \varepsilon,
\]
for $N= N(\varepsilon,k)$ sufficiently large. For $M_2$, by the above observation in the finite-rank case, it follows that $M_2 = 0$ for any choice of $N$. Hence we obtain \eqref{e:oneoverk}.

\subsection{The maximal-in-time estimates}
Here prove the estimate \eqref{maximal} in Theorem  \ref{t:maximal main} (the claims regarding sharpness in Theorem \ref{t:maximal main} are justified later in Section \ref{subsection:necssary}). We recall the notation $U_m=e^{it(-\Delta)^{m/2}}$. Also, throughout the following proof, we write $L_x^q(\d\mu)L_t^r = L_x^q(\mathbb R^d,\d\mu)L_t^r(0,1)$ and $\chi_E$ for the characteristic function of $E$.

We consider the following cases and treat them slightly differently.
\begin{itemize}
\item $m\in(1,\infty)$ and $s\in (\frac d4,\frac d2)$;
\item $m\in (0,1)$;
\item $m\in (1,\infty)$ and $s=\frac d4$.
\end{itemize}
The key oscillatory integral estimates to handle the first two cases are as follows.
\begin{lemma}\label{l:disp space frec loc}
Let $d\in\mathbb{N}$ and $\varphi\in C_0^\infty$ be supported in $\{r \in\mathbb R: 2^{-1} < r < 2\}$. For $m\in(0,\infty)\backslash\{1\}$, we have 
\begin{equation}\label{e:Sjolin}
\sup_{t\in \mathbb R}
\left|
\int_{\mathbb R^d} e^{i(x\cdot \xi+t|\xi|^m)}\varphi(2^{-k}|\xi|)\,\d\xi
\right|
\lesssim
\frac{2^{dk}}{(1+2^k|x|)^{\frac d2}}
\end{equation}
for each $k\in\mathbb Z$. For $m\in(0,1)$ and $|x|<1$, we further have that 
\begin{equation}\label{e:Walther}
\sup_{t\in(-1,1)}
\left|\int_{\mathbb R^d} e^{i(x\cdot\xi+t|\xi|^m)}\varphi(2^{-k}|\xi|)\,\d\xi\right|
\lesssim
\frac{2^{\frac d2 k}|x|^{-\frac d2}}{(1+2^k|x|^{\frac{1}{1-m}})^\frac d2}
\end{equation}
for each $k\in \mathbb Z$.
\end{lemma}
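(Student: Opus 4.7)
The plan is to reduce both estimates to one-dimensional oscillatory integrals by exploiting radial symmetry, and then to combine stationary phase, Van der Corput and integration by parts to analyze the reduced integrals. In both cases, after passing to polar coordinates $\xi=r\omega$, the angular integration produces the Fourier transform of surface measure on $S^{d-1}$, for which we use the classical asymptotic expansion
\[
\widehat{d\sigma}(y)=c_+e^{i|y|}(1+|y|)^{-(d-1)/2}+c_-e^{-i|y|}(1+|y|)^{-(d-1)/2}+E(y),
\]
valid for $|y|\gtrsim 1$, where the error term $E(y)$ enjoys faster decay $(1+|y|)^{-(d+1)/2}$. What remains is a one-dimensional integral with an explicit phase, and the task is to analyze that integral.

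For \eqref{e:Sjolin}, I would first rescale $\xi=2^k\eta$ to reduce to the dimensionless estimate
\[
\sup_{s\in\mathbb R}\left|\int_{\mathbb R^d}e^{i(y\cdot\eta+s|\eta|^m)}\varphi(|\eta|)\,d\eta\right|\lesssim(1+|y|)^{-d/2}
\]
with $y=2^kx$ and $s=2^{mk}t$. The case $|y|\lesssim 1$ is immediate from the $L^1$-bound, so assume $|y|\gg 1$. The above reduction leads to analyzing
\[
J_\pm(s,y)=\int e^{i\psi_\pm(r)}r^{(d-1)/2}\varphi(r)\,dr,\qquad\psi_\pm(r)=sr^m\pm r|y|,
\]
and the main claim becomes $|J_\pm|\lesssim|y|^{-1/2}$. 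Split on the size of $|s|$: when $|s|\leq c|y|$ for $c$ small, one has $|\psi'_\pm(r)|\geq|y|-m|s|\gtrsim|y|$ on the support of $\varphi$, and integration by parts $N$ times gives $|J_\pm|\lesssim_N|y|^{-N}$; when $|s|\geq c|y|$, Van der Corput's second-derivative lemma applies with $|\psi''_\pm(r)|=m|m-1||s|r^{m-2}\gtrsim|s|$, producing $|J_\pm|\lesssim|s|^{-1/2}\lesssim|y|^{-1/2}$. Combining gives the required decay, and the error term is handled analogously with a better exponent.

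For \eqref{e:Walther}, I would work directly without rescaling, since the local constraints $|x|<1$ and $|t|<1$ do not scale simply. Polar coordinates again reduce matters to estimating
\[
J_\pm(t,x)=\int e^{i(tr^m\pm r|x|)}r^{(d-1)/2}\varphi(2^{-k}r)\,dr.
\]
The case $2^k|x|\leq 1$ is handled by the trivial bound $|I_k|\lesssim 2^{dk}$, which dominates the right-hand side in that regime. Assume $2^k|x|\geq 1$. The critical point of the relevant phase lies at $r_*=(m|t|/|x|)^{1/(1-m)}$, and for $|t|<1$ one has $r_*/2^k<m^{1/(1-m)}$ precisely when $2^k|x|^{1/(1-m)}\gtrsim 1$. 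When $2^k|x|^{1/(1-m)}\lesssim 1$, the stationary point $r_*$ may sit in the support $\{r\sim 2^k\}$ for some $|t|<1$, and using $|\psi''_\pm(r_*)|=(1-m)|x|/r_*$, the worst-case 1-dimensional stationary phase estimate yields $|J_\pm|\lesssim 2^{kd/2}|x|^{-1/2}$, producing the target bound $2^{kd/2}|x|^{-d/2}$. When $2^k|x|^{1/(1-m)}\gg 1$, the critical point is forced to lie strictly below the support, $|\psi'_\pm|\gtrsim|x|$ holds uniformly on $\{r\sim 2^k\}$, and integration by parts together with the remaining amplitude accounting supplies the extra decay factor $(2^k|x|^{1/(1-m)})^{-d/2}$.

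The main obstacle is extracting the refined factor $(1+2^k|x|^{1/(1-m)})^{-d/2}$ uniformly in $|t|<1$ across the transition regime, where $r_*$ may be moderately close to the edge of the support. In particular, a direct application of Van der Corput via $\psi''_\pm$ alone is insufficient because the bound $|t|^{-1/2}$ degenerates as $|t|\to 0$; what is needed is an interpolation between the Van der Corput bound (effective for moderately large $|t|$) and the integration-by-parts bound using $|\psi'_\pm|\gtrsim|x|$ (effective for small $|t|$). Carefully tracking the interplay of these two bounds, together with the size of the amplitude $r^{(d-1)/2}\varphi(2^{-k}r)$ and the rescaled support length $\sim 2^k$, yields the sharp Walther-type factor. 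The special role of the hypothesis $m\in(0,1)$ enters through the sign of $m-1$, which ensures that $r^{m-1}$ is decreasing in $r$ and consequently that the stationary point is \emph{below} rather than above the support once $|x|$ exceeds the threshold $2^{-(1-m)k}$.
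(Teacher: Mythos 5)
Your treatment of \eqref{e:Sjolin} is correct and follows a genuinely different route from the paper. You reduce to one dimension via the asymptotic expansion of the Fourier transform of surface measure, then split on $|s|$ versus $|y|$ and apply integration by parts or Van der Corput's second-derivative test to the scalar phase $sr^m\pm r|y|$. The paper instead rescales and directly invokes the classical stationary-phase estimate for the $d$-dimensional hypersurface $\{(\xi,|\xi|^m):|\xi|\in[\tfrac12,2]\}$, which has non-vanishing Gaussian curvature, yielding $(1+|(x,t)|)^{-d/2}$ in one step. Your route is more elementary but equivalent, and the $L^1$ bound for $|y|\lesssim1$ plus the faster-decaying error term close it out.

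For \eqref{e:Walther}, however, the proof is incomplete. Your case $2^k|x|^{1/(1-m)}\gg1$ is sound in outline: the constraint $|t|<1$ forces the derivative $mtr^{m-1}$ to be dominated by $|x|$ on $r\sim 2^k$, so $|\psi_\pm'|\gtrsim|x|$, and integration by parts delivers the extra factor. But in the middle regime $2^k|x|\geq1$ and $2^k|x|^{1/(1-m)}\lesssim1$ you write that ``the worst-case 1-dimensional stationary phase estimate yields $|J_\pm|\lesssim 2^{kd/2}|x|^{-1/2}$,'' yet $|\psi_\pm''(r)|\sim|t|\,2^{(m-2)k}$ degenerates as $|t|\to0$, so Van der Corput alone does not produce a bound uniform in $|t|<1$. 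You acknowledge this in the final paragraph (``the main obstacle\ldots'') and correctly describe the needed remedy --- split $t$ into a range where $2^{mk}|t|\sim 2^k|x|$ (stationary phase via the Hessian) and a complementary range where $|\psi_\pm'|\gtrsim|x|$ or $|\psi_\pm'|\gtrsim|t|\,2^{(m-1)k}$ (integration by parts) --- but you do not carry it out; you merely assert that careful tracking ``yields the sharp Walther-type factor.'' As it stands this is a gesture, not a proof. The paper executes precisely this dichotomy: after rescaling it splits, for small $2^k|x|^{1/(1-m)}$, into subcase (I) where $2^{mk}|t|\notin[a\,2^k|x|,b\,2^k|x|]$ and the gradient $\nabla\theta_k$ is bounded below (integration by parts), and subcase (II) where $2^{mk}|t|\sim 2^k|x|$ and $|\det\mathrm{Hess}\,\theta_k|\sim(2^k|x|)^d$ (stationary phase). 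You need to supply an explicit version of that $t$-split with the corresponding bounds to close the argument; without it, the proof of \eqref{e:Walther} has a genuine gap in exactly the regime you flag.
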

\begin{proof}
We change the variables to write 
\[
\int_{\mathbb R^d} e^{i(x\cdot \xi+t|\xi|^m)}\varphi(2^{-k}|\xi|)\,\d\xi
=
2^{dk}
\int_{\mathbb R^d} e^{i\theta_k(\xi)}\varphi(|\xi|)\,\d\xi
=:
\I_k,
\]
where $\theta_k(\xi)=2^k x\cdot\xi+2^{mk}t|\xi|^m$. Thus, \eqref{e:Sjolin} follows if
\[
\sup_{t \in \mathbb{R}} \bigg|\int_{\mathbb R^d} e^{i(x\cdot\xi+t|\xi|^m)}\varphi(|\xi|)\,\d\xi\bigg| \lesssim \frac{1}{(1 + |x|)^{\frac{d}{2}}}
\]
for any $x \in \mathbb{R}^d$. In fact, since $\{ (\xi,|\xi|^m) : |\xi| \in [\frac{1}{2},2]\}$ has non-vanishing gaussian curvature, we have (see, for example, \cite[Section VIII]{Stein})
\[
\bigg|\int_{\mathbb R^d} e^{i(x\cdot\xi+t|\xi|^m)}\varphi(|\xi|)\,\d\xi\bigg| \lesssim \frac{1}{(1 + |(x,t)|)^{\frac{d}{2}}}
\]
and the desired estimate follows immediately.

For \eqref{e:Walther} we fix $m \in (0,1)$ and $|x|, |t| < 1$, and choose constants $a$ and $b$ satisfying
\[
0 < a < \frac{1}{m} 2^{-(1-m)}, \qquad \frac{1}{m}2^{1-m} < b. 
\]
First we consider the case $2^k|x|^{\frac{1}{1-m}} \geq a^{-\frac{1}{1-m}}$. Then we have $2^{mk} \leq a 2^k|x|$, and therefore
\[
|\nabla \theta_k(\xi)| \geq 2^k|x| - m2^{mk}|t||\xi|^{m-1} \geq (1 - a m2^{1-m})2^k|x| \simeq 2^k|x|
\]
for $|\xi|$ in the support of $\varphi$. This means
\[
|\mathcal{I}_k| \leq C_N \frac{2^{dk}}{(2^k|x|)^N} \lesssim \frac{1}{|x|^{\frac{d(2-m)}{2(1-m)}}}
\]
as desired. The first inequality holds for any $N \in \mathbb{N}$ by integration by parts, and the second estimate can be checked since $2^{-k} \lesssim |x|^{\frac{1}{1-m}}$ and by taking $N$ sufficiently large (depending on $d$ and $m$).

In the remaining case $2^k|x|^{\frac{1}{1-m}} < a^{-\frac{1}{1-m}}$, the goal is
\begin{equation} \label{e:lastcase}
|\mathcal{I}_k| \lesssim \frac{2^{dk}}{(2^k|x|)^\frac{d}{2}}.
\end{equation}
We split into subcases:
\begin{itemize}
\item[(I)] $2^{mk}|t| < a 2^k|x|$ or $2^{mk}|t| > b 2^k|x|$;
\item[(II)] $a 2^k|x| \leq 2^{mk}|t| \leq b 2^k|x|$.
\end{itemize}
In Case (I) we have $|\nabla \theta_k(\xi)| \geq C2^k|x|$, where $C = 1 - a m2^{1-m}$, or $C = b m2^{-(1-m)} -1$, and therefore
\[
|\mathcal{I}_k| \leq C_N \frac{2^{dk}}{(1 + 2^k|x|)^N}
\]
for any $N \in \mathbb{N}$. This follows either from the trivial estimate $|\I_k|\lesssim 2^{dk}$ or by integration by parts. Taking $N > \frac{d}{2}$ we obtain \eqref{e:lastcase}.

In Case (II), we have
\[
|\det \text{Hess}\, \theta_k(\xi)|
\sim 
(2^{mk}|t|)^d
\sim
(2^k|x|)^d
\]
and therefore we again obtain \eqref{e:lastcase} by using standard results from the theory of oscillatory integrals (see, for example, \cite[Section VIII]{Stein}).
\end{proof}

The following elementary lemma will also be useful to us.
\begin{lemma}[\cite{ChoShiraki}]
\label{l:Young}
Let $0 < \alpha \leq d$ and $\mu\in \mathcal M^\alpha(\mathbb B^d)$. Then, for each $\ell\in\mathbb Z$ we have
\[
 \iint |g(x)| |h(x')| \chi_{(0,2^{-l})}(|x-x'|)\,  \d \mu(x) \d \mu(x') 
\lesssim 
2^{-\alpha l}\|g\|_{L_x^2(\mathbb B^d,\d\mu) }\|h\|_{L_x^2(\mathbb B^d, \d\mu)}.
\]
\end{lemma}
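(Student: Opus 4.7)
The plan is to establish the estimate as a straightforward application of Schur's test (equivalently, a single use of the Cauchy--Schwarz inequality on the double integral) exploiting the $\alpha$-dimensionality assumption on $\mu$. Writing $K_\ell(x,x') := \chi_{(0,2^{-\ell})}(|x-x'|)$, the first observation I would record is that $K_\ell$ is symmetric and that the defining property of $\mu \in \mathcal{M}^\alpha(\mathbb{B}^d)$ immediately yields
\[
\sup_{x \in \mathbb{R}^d} \int_{\mathbb{R}^d} K_\ell(x,x') \, \d\mu(x') \leq \sup_{x \in \mathbb{R}^d} \mu(B(x,2^{-\ell})) \lesssim 2^{-\alpha \ell},
\]
with the same bound holding after interchanging the roles of $x$ and $x'$.

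Next, I would split $K_\ell = K_\ell^{1/2} \cdot K_\ell^{1/2}$ (which is valid since $K_\ell$ takes only the values $0$ and $1$) and apply the Cauchy--Schwarz inequality to the double integral in the product measure $\d\mu \otimes \d\mu$, obtaining
\[
\iint |g(x)||h(x')| K_\ell(x,x') \, \d\mu(x)\d\mu(x') \leq \Bigl( \iint |g(x)|^2 K_\ell \, \d\mu\d\mu \Bigr)^{1/2} \Bigl( \iint |h(x')|^2 K_\ell \, \d\mu\d\mu \Bigr)^{1/2}.
\]
Then, using Fubini to integrate out the free variable in each factor and inserting the uniform marginal bound from the previous paragraph, each factor is controlled by $2^{-\alpha\ell/2}\|g\|_{L^2(\d\mu)}$ and $2^{-\alpha\ell/2}\|h\|_{L^2(\d\mu)}$ respectively, and multiplying them delivers the desired estimate.

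I do not foresee a real obstacle here: the entire content is the combination of the elementary $\alpha$-dimensional volume bound $\mu(B(x,r)) \lesssim r^\alpha$ with a standard Schur-type argument, and the symmetry of the kernel $K_\ell$ ensures that both marginal bounds coincide so no balancing of exponents is required. The argument is independent of the dimension $d$ and works for all admissible values $\alpha \in (0,d]$ without modification.
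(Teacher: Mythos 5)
Your proof is correct and coincides with the paper's argument: both apply Cauchy--Schwarz on the product measure $\d\mu\otimes\d\mu$ after the trivial factorization of the indicator kernel, then invoke the $\alpha$-dimensional bound $\mu(B(x,2^{-\ell}))\lesssim 2^{-\alpha\ell}$ on the resulting marginal integrals. Nothing further is needed.
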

\begin{proof}
This is a simple consequence of the Cauchy--Schwarz inequality. Indeed, 
\begin{align*}
    &\iint |g(x)| |h(x')|\chi_{(2^{-l-1},2^{-l})}(x-x')\,\d\mu(x)\d\mu(x')\\
    &\leq
    \left(
    \int|g(x)|^2\mu(B(x,2^{-l}))\,\d\mu(x)
    \right)^\frac12
    \left(
    \int|h(x)|^2\mu(B(x,2^{-l}))\,\d\mu(x)
    \right)^\frac12\\
    &\lesssim
    2^{-\alpha l}\|g\|_{L^2(\mathbb B^d, \d\mu)}^2\|h\|_{L^2(\mathbb B^d, \d\mu)}^2.
\end{align*}
The defining property of the $\alpha$-dimensional measures was used at the last step. 
\end{proof}

\subsection*{The case of $m\in(1,\infty)$ and $\frac d4<s<\frac d2$}
\begin{equation}\label{e:goal maximal}
\|WT\overline{W}\|_{\C^{\beta'}}
\lesssim
\|W\|_{L^\infty(\d\mu)L^2}^2
\end{equation}
where $T=D^{-2s}U_mU_m^*$. It suffices to prove 
\begin{equation} \label{e:maximaltimegoal}
\sum_{k\in\mathbb Z} \|WT_k\overline{W} \|_{\C^{\beta'}}
\lesssim
\|W\|_{L^\infty(\d\mu)L^2}^2
\end{equation}
for $\beta\in[1,\frac{\alpha}{d-2s})$. Here, $T_k:=P_k^2T$ and $\widehat{P_k f}(\xi) = \varphi(2^{-k}|\xi|)\widehat{f}(\xi)$ with $\sum_{k\in\mathbb Z}\varphi(2^{-k}|\xi|)^2=1$. In order to prove \eqref{e:maximaltimegoal}, we make a further decomposition in the spatial variable. In particular, we write
\begin{align*}
T_k F(t,x)
&=
\int F(t',x') K_k(t-t',x-x')\,\d t' \d\mu(x')\\
&=
\sum_{l\geq0}\int F(t',x') K_{k,l}(t-t',x-x')\,\d t' \d\mu(x')
=:
\sum_{l\geq0}T_{k,l}F(t,x),
\end{align*}
where $K_{k,l}(t,x) = \chi_l(|x|)K_k(t,x)$, $\chi_l=\chi_{(2^{-l-1},2^{-l})}$, and 
\[
K_k(t,x)
=
\int e^{i(x\cdot\xi+t|\xi|^m)} \frac{\varphi(2^{-k}|\xi|)^2}{|\xi|^{2s}}\,\d\xi. 
\]
The key estimates are:
\begin{align}
\|WT_k\overline{W}\|_{\C^1} & \lesssim 2^{(d-2s)k}\|W\|_{L^2(\d\mu)L^2}^2 \label{e:C1k}; \\
\|WT_{k,l}\overline{W} \|_{\C^2} &\lesssim
\frac{2^{(d-2s)k}2^{-\frac{\alpha}{2}l}}{(1+2^{k-l })^\frac d2}\|W\|_{L^4(\d\mu)L^2}^2 \label{e:C2kl}; \\
\|WT_{k,l}\overline{W}\|_{\C^\infty}
& \lesssim
\frac{2^{(d-2s)k}2^{-\alpha l}}{(1+2^{k-l})^{\frac d2}}\|W\|_{L^\infty(\d\mu)L^2}^2. \label{e:Cinftykl}
\end{align} 
Before proving these, let us first see how one obtains \eqref{e:maximaltimegoal}.

As one application of \eqref{e:C1k} we see that
\begin{equation*} 
\sum_{k < 0} \|WT_k\overline{W}\|_{\C^{\beta'}} \lesssim \sum_{k < 0} \|WT_k\overline{W}\|_{\C^1} \lesssim \|W\|_{L^2(\d\mu) L^2}^2 \lesssim \|W\|_{L^\infty(\d\mu) L^2}^2.
\end{equation*}
To handle $k\geq0$, we shall first consider the case when $\frac{\alpha}{d-2s}>2$, in which case it suffices to consider $\beta \in (2,\frac{\alpha}{d-2s})$. Note that \eqref{e:C2kl} implies
\begin{align*}
\|WT_k\overline{W}\|_{\C^2}
&\lesssim
\sum_{l\leq k}\|WT_{k,l}\overline{W} \|_{\C^2}
+
\sum_{l>k}\|WT_{k,l}\overline{W}\|_{\C^2}\\
&\lesssim
2^{(d-2s)k}\|W\|_{L^4(\d\mu)L^2}^2
\left(
\sum_{l\leq k}\frac{2^{-\frac\alpha2 l}}{2^{\frac d2(k-l)}}
+
\sum_{l\leq k}2^{-\frac\alpha2 l}
\right)\\
&\lesssim
k2^{(\frac d2-2s-\frac{\alpha}{2})k}\|W\|_{L^4(\d\mu)L^2}^2.
\end{align*}
Interpolating this with \eqref{e:C1k} by H\"older's inequality, one obtains 
\begin{align*}
    \sum_{k\geq0} \|WT_k\overline{W}\|_{\C^{\beta'}}
    & \lesssim
    \sum_{k\geq0} \|WT_k\overline{W}\|_{\C^1}^{\frac{2}{\beta'}-1}     \|WT_k\overline{W}\|_{\C^2}^{2-\frac{2}{\beta'}} \\
    & \lesssim \sum_{k\geq0} k^{\frac{2}{\beta}} 2^{(d-2s - \frac{\alpha}{\beta})k} \|W\|_{L^4(\d\mu)L^2}^2
\end{align*}
which gives \eqref{e:maximaltimegoal} since $\beta < \frac{\alpha}{d - 2s}$. 

When $\frac{\alpha}{d-2s}\leq 2$, we interpolate between \eqref{e:C2kl} with \eqref{e:Cinftykl} before summing up in $l$. In particular we obtain 
\[
\|WT_{k,l}\overline{W} \|_{\C^{\beta'}}
\lesssim
\frac{2^{(d-2s)k}2^{-\frac\alpha\beta l}}{(1+2^{k-l})^\frac d2} \|W\|_{L^\infty(\d\mu)L^2}^2.
\]
Summing up both in $k,l\geq0$, and using that $s \in (\frac{d}{4},\frac{d}{2})$ and $\beta < \frac{\alpha}{d-2s}$, we obtain \eqref{e:maximaltimegoal} in this case too.

It remains to verify \eqref{e:C1k}--\eqref{e:Cinftykl}. For \eqref{e:C1k}, since $T_k = (D^{-s}P_kU_m)(D^{-s}P_kU_m)^*$, this is equivalent to
\[
\bigg\|\sum_j\lambda_j|D^{-s}P_kU_mf_j|^2\bigg\|_{L^\infty(\d\mu)L^\infty}
\lesssim
2^{k(d-2s)}\|\lambda\|_{\ell^\infty}
\]
for orthonormal systems $(f_j)_j$ in $L^2(\mathbb{R}^d)$. This latter estimate holds thanks to Bessel's inequality and since $\int |\xi|^{-2s} \varphi(2^{-k}\xi) \, \mathrm{d}\xi \sim 2^{k(d-2s)}$.

For \eqref{e:C2kl} and \eqref{e:Cinftykl}, we use \eqref{e:Sjolin} of Lemma \ref{l:disp space frec loc} to obtain
\[
|K_{k,l}(t,x)|
\lesssim
\chi_l(|x|)\frac{2^{(d-2s)k}}{(1+2^{k-l})^{\frac d2}}.
\]
Lemma \ref{l:Young} now immediately yields \eqref{e:C2kl}. For \eqref{e:Cinftykl}, note that the above kernel estimates gives
\begin{align*}
\|WT_{k,l}\overline{W}\|_{\C^\infty}
&=
\sup_{\|f_1\|_2=\|f_2\|_2=1}
\left|
\int f_1W(t,x)T_{k,l}\overline{W}f_2 (t,x)\,\d t\d \mu(x)
\right|\\
&\lesssim
\frac{2^{(d-2s)k}}{(1+2^{k-l})^\frac d2}
\sup_{\|f_1\|_2=\|f_2\|_2=1}\iint \|f_1W(\cdot,x)\|_{L^1}\|f_2\overline{W}(\cdot,x')\|_{L^1}\chi_l(|x-x'|)\,\d\mu(x)\d\mu(x'),
\end{align*}
and then \eqref{e:Cinftykl} follows from another application of Lemma \ref{l:Young}.

\subsection*{The case of $m\in(0,1)$}
In the case $2\alpha \leq d\beta$, the goal is to prove \eqref{e:maximaltimegoal} for $\frac{1}{2}(d-\frac{\alpha}{\beta}) < s < \frac{d}{2}$. Since $\frac{1}{2}(d-\frac{\alpha}{\beta}) \geq \frac{d}{4}$ in this case, and since \eqref{e:Sjolin} holds for $m \in (0,1)$ too, the above argument for $m \in (1,\infty)$ may be used to obtain \eqref{e:maximaltimegoal}.

Now suppose $2\alpha > d\beta$, in which case we want to prove \eqref{e:goal maximal} for $\frac d4-\frac{1}{2}(1-m)(\frac\alpha\beta-\frac d2) < s < \frac{d}{2}$. Our goal is again to show \eqref{e:maximaltimegoal} and the argument is similar to the above, except use of \eqref{e:Sjolin} is replaced by \eqref{e:Walther}. By doing so, one may obtain
\[
\|WT_{k,l}\overline{W} \|_{\C^2}
\lesssim
\frac{2^{(\frac d2-2s)k}2^{(\frac d2-\frac \alpha2)l}}{(1+2^{k-\frac{l}{1-m}})^\frac d2} \|W\|_{L^\infty(\d\mu)L^2}^2
\]
and 
\[
\|WT_{k,l}\overline{W} \|_{\C^\infty}
\lesssim
\frac{2^{(\frac d2-2s)k}2^{(\frac d2-\alpha)l}}{(1+2^{k-\frac{l}{1-m}})^\frac d2} \|W\|_{L^\infty(\d\mu)L^2}^2,
\]
which yield
\[
\|WT_{k,l}\overline{W}\|_{\C^{\beta'}}
\lesssim
\frac{2^{(\frac d2-2s)k}2^{(\frac d2-\frac\alpha\beta)l}}{(1+2^{k-\frac{l}{1-m}})^\frac d2}
\|W\|_{L^\infty(\d\mu)L^2}^2.
\]
Note that the restriction $2\alpha > d\beta$ implies, in particular, that $\beta < 2$. Therefore, 
\begin{align*}
\sum_{l\geq0} \|WT_{k,l}\overline{W} \|_{\C^{\beta'}}
&=
\sum_{l\leq(1-m)k} \|WT_{k,l}\overline{W} \|_{\C^{\beta'}}
+
\sum_{l>(1-m)k} \|WT_{k,l}\overline{W} \|_{\C^{\beta'}}\\
&\lesssim
2^{(\frac d2-2s)k}\|W\|_{L^\infty(\d\mu)L^2}^2
\left(
\sum_{l\leq (1-m)k}\frac{2^{(\frac d2-\frac \alpha\beta)l}}{2^{(k-\frac{l}{1-m})\frac{d}{2}}}
+
\sum_{l> (1-m)k} 2^{(\frac d2-\frac\alpha\beta)l}
\right)\\
&\lesssim
k2^{(\frac d2-2s+(1-m)(\frac d2-\frac\alpha\beta))k}\|W\|_{L^\infty(\d\mu)L^2}^2.
\end{align*}
This gives \eqref{subsection:maximalintime} since $\frac d2-(1-m)(\frac\alpha\beta-\frac d2)<2s$.

\subsection*{The case of $m\in(1,\infty)$ and $s=\frac d4$}
Instead of Lemma \ref{l:disp space frec loc}, we shall make use of the following estimate, which for $d=1$ appears in \cite{Sjolin} and is applied to the single-particle case by Barcel\'o \textit{et al.} \cite{BBCR}. 
To state it, we write $\varphi_{\leq N}(|\xi|)=\sum_{k\leq N} \varphi(2^{-k}|\xi|)$ for each $N\in \mathbb Z$. 

\begin{lemma}\label{l:disp space}
Let $d\in\mathbb N$. For $m\in(1,\infty)$, we have 
\[
\sup_{t\in\mathbb R}\left|\int_{\mathbb R^d} e^{i(x\cdot \xi+t|\xi|^m)}\frac{\varphi_{\leq N}(|\xi|)}{|\xi|^{\frac{d}{2}}}\,\d\xi\right|
\lesssim
|x|^{-\frac{d}{2}}
\]
uniformly in $N\in\mathbb Z$.
\end{lemma}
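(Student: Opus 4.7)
The estimate is scale-invariant in $|x|$: substituting $\xi = |x|^{-1}\eta$ transforms the integral into $|x|^{-d/2}$ times an expression of the same shape with $x$ replaced by $\omega = x/|x| \in S^{d-1}$, $t$ replaced by $s = t|x|^{-m}$, and $N$ shifted to some $M = M(N,|x|) \in \Z$. Thus it suffices to prove
\[
\sup_{M \in \Z,\, s \in \R,\, \omega \in S^{d-1}} \bigg| \int_{\R^d} e^{i(\omega \cdot \eta + s|\eta|^m)} \frac{\varphi_{\leq M}(|\eta|)}{|\eta|^{d/2}} \, \d \eta \bigg| < \infty,
\]
i.e.\ a scale-invariant bound uniform in $M$. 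The key point is that the power $|\xi|^{-d/2}$ is precisely what makes the rescaling match the target $|x|^{-d/2}$.

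For $d \geq 2$ I would reduce to a one-dimensional oscillatory integral via polar coordinates and the standard asymptotic
\[
\widehat{d\sigma_{S^{d-1}}}(r\omega) = r^{-(d-1)/2}\bigl(c_+ e^{ir} + c_- e^{-ir}\bigr) + O(r^{-(d+1)/2}), \qquad r \gtrsim 1,
\]
together with $|\widehat{d\sigma}(r\omega)| \lesssim 1$ for $r \lesssim 1$. The near-origin piece $r \lesssim 1$ is bounded directly since $r^{d/2 - 1}$ is integrable at the origin; the leading asymptotic reduces the main part to the one-dimensional integral
\[
\sup_s \bigg| \int_0^\infty r^{-1/2} \widetilde{\varphi}_{\leq M}(r) e^{i(\pm r + s r^m)} \, \d r \bigg| \lesssim 1;
\]
and the error term, whose effective amplitude decays like $r^{-1}$, is handled by one extra integration by parts in $r$ (converting the potentially logarithmic divergence into a convergent integral).

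For the remaining one-dimensional estimate (equivalently $d = 1$, which is Sjölin's case), I would dyadically decompose into shells $|\eta| \sim 2^k$. The phase $\phi(\eta) = \eta + s|\eta|^m$ admits at most one stationary point $\eta_0$, with $|\eta_0| \sim |s|^{-1/(m-1)}$ and $|\phi''(\eta_0)| \sim |\eta_0|^{-1}$. In the shell containing $\eta_0$, stationary phase yields a contribution of size $|\eta_0|^{-1/2} \cdot |\phi''(\eta_0)|^{-1/2} = O(1)$; the amplitude weight $|\eta|^{-1/2}$ is calibrated precisely to cancel the $|\phi''|^{-1/2}$ factor. Away from that shell, the bound $|\phi'(\eta)| \gtrsim \max(1,|s||\eta|^{m-1})$ allows repeated integration by parts, giving geometric decay in $|k - k_0|$ (where $2^{k_0} \sim |\eta_0|$), so the dyadic pieces sum uniformly in $M$.

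The main obstacle is producing a bound that is genuinely uniform in $M$ (rather than only finite for each $M$) across the dyadic scales. This hinges on using the sharper lower bound $|\phi'| \gtrsim \max(1,|s||\eta|^{m-1})$ and separating the regimes $|s||\eta|^{m-1} \lesssim 1$ and $|s||\eta|^{m-1} \gg 1$: the behaviour of $\phi''/(\phi')^2$ differs in the two cases, and a naive single integration by parts can fail when $m > 2$ (where $\phi''$ grows) unless one iterates carefully and uses the additional decay coming from $(\phi')^{-1} \sim (|s||\eta|^{m-1})^{-1}$ at large $|\eta|$.
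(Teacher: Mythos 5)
Your proposal is correct and reaches the same conclusion, but for $d \geq 2$ it takes a genuinely different route from the paper's. The paper works directly in $\mathbb{R}^d$: after discarding the low-frequency shells $2^k < |x|^{-1}$ by the triangle inequality, it rescales each remaining dyadic shell to unit size and splits the index set into non-stationary shells, where $|\nabla\theta_k| \gtrsim 2^k|x|$ and integration by parts gives rapid decay in $k$, and an $O(1)$ set of stationary shells, where the Hessian of the phase has $|\det\mathrm{Hess}\,\theta_k| \sim (2^k|x|)^d$ and the non-degenerate stationary-phase bound gives $|\mathcal{I}_k| \lesssim |x|^{-d/2}$ directly. You instead rescale once globally to normalise $|x|=1$, pass to polar coordinates, and use the asymptotic expansion of $\widehat{d\sigma_{S^{d-1}}}$ to collapse to a one-dimensional oscillatory integral, which you then handle by the same dyadic stationary/non-stationary dichotomy. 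Both are valid; the paper's version avoids the sphere-measure asymptotics and its error term and is somewhat shorter, while yours exhibits the $d=1$ case (Sj\"olin's lemma) as the essential core.

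Three small remarks on your write-up. First, after the global rescaling the cutoff $\varphi_{\leq N}(|x|^{-1}\cdot)$ is not literally of the form $\varphi_{\leq M}$ unless $|x|^{-1}$ is a power of $2$; this is harmless, since the argument uses only the Littlewood--Paley structure of the cutoff, but it should be acknowledged rather than written as an exact shift of index. Second, the error term $O(r^{-(d+1)/2})$ in the sphere asymptotics produces an integrand of order $r^{d/2-1}\cdot r^{-(d+1)/2} = r^{-3/2}$, which is absolutely integrable over $r \gtrsim 1$; no integration by parts is needed, and there is no latent logarithmic divergence. Third, the concern you flag about $m>2$ is not in fact an obstruction: a single integration by parts per shell, combined with $|\phi'|\gtrsim \max(1,|s|r^{m-1})$ and $|\phi''|\sim |s|r^{m-2}$, already yields a bound of order $2^{-k/2}$ for each non-stationary shell in both regimes $2^k \lesssim r_0$ and $2^k \gtrsim r_0$, and this sums uniformly in the truncation parameter. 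So while the caution you raise is reasonable, a careful one-IBP estimate suffices, which in fact mirrors the paper's treatment of $V_1$ (where a high-order IBP is used only to make the geometric decay in $k$ manifest).
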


\begin{proof}
First notice that
\[
    \sum_{2^k<|x|^{-1}} \left|\int_{\mathbb R^d} e^{i(x\cdot\xi+t|\xi|^m)}      \frac{\varphi(2^{-k}|\xi|)}{|\xi|^{\frac{d}{2}}}
\, \mathrm{d} \xi\right|
    \lesssim
    \int_0^{|x|^{-1}} r^{\frac{d}{2}-1}\, \mathrm{d} r
    \sim 
    |x|^{-\frac{d}{2}}. 
\]
We set $V = \{k\in\mathbb Z:|x|^{-1} \leq 2^k \leq 2^N\}$ and for $k \in V$ we change variables to write
\[
\int_{\mathbb R^d} e^{i(x\cdot\xi+t|\xi|^m)}      \frac{\varphi(2^{-k}|\xi|)}{|\xi|^{\frac{d}{2}}}
\, \mathrm{d} \xi 
=
2^{\frac{d}{2}k}
\int_{\mathbb R^d} e^{i\theta_k(\xi)}
    \widetilde{\varphi}(|\xi|)
    \,\mathrm{d} \xi =: \mathcal{I}_k
\]
where $\widetilde{\varphi}(\xi) := |\xi|^{-\frac{d}{2}}\varphi(|\xi|)$, and $\theta_k(\xi) = 2^k x \cdot \xi + 2^{mk} t |\xi|^m$. Also, we split $V$ into
\[
V_1
=
\{
k\in V : 2^{mk}|t| < a2^k|x|\,\, \text{or} \,\, 2^{mk}|t| > b2^k|x|
\}
\]
and 
\[
V_2
=
\{
k\in V :  a 2^k|x| \leq 2^{mk}|t| \leq  b 2^k|x|
\}
\]
where the constants $a$ and $b$ are chosen as in the proof of Lemma \ref{l:disp space frec loc}. Since we have $|\nabla \theta_k(\xi)| \gtrsim 2^k|x|$ whenever $k \in V_1$ and $|\xi|$ belongs to the support of $\widetilde{\varphi}$, integration by parts yields
\[
|\mathcal{I}_k| \leq C_N 2^{\frac{d}{2}k}(2^k|x|)^{-N}
\]
for any natural number $N$. Thus, if we choose $N$ sufficiently large then
\[
\sum_{k \in V_1} |\mathcal{I}_k| \lesssim |x|^{-N} \sum_{2^k \geq |x|^{-1}} 2^{-k(N - \frac{d}{2})} \lesssim |x|^{-\frac{d}{2}}.
 \]
Next consider $k \in V_2$, and note that
\[
| \det \mathrm{Hess} \, \theta_k (\xi)| \sim (2^{km}|t|)^d \sim (2^k|x|)^d
\]
if $\xi$ belongs to the support of $\widetilde{\varphi}$. Again using standard results in the theory of oscillatory integrals, we obtain
\[
|\mathcal{I}_k| \lesssim 2^{\frac{d}{2}k} (2^k|x|)^{-\frac{d}{2}} \sim |x|^{-\frac{d}{2}}.
\]
However, the cardinality of $V_2$ is $O(1)$ and so this completes the proof.
\end{proof}

\begin{remarks}
(i) More generally, for $m\in(1,\infty)$ and any $\frac d2\leq\gamma<d$ we have
\begin{equation} \label{e:generalgamma}
\sup_{t\in\mathbb R}\left|\int_{\mathbb R^d} e^{i(x\cdot \xi+t|\xi|^m)}\frac{\varphi_{\leq N}(|\xi|)}{|\xi|^\gamma}\,\d\xi\right|
\lesssim
|x|^{-(d-\gamma)}
\end{equation}
uniformly in $N\in\mathbb Z$. The non-endpoint cases $\frac d2 < \gamma<d$ follow quickly from \eqref{e:Sjolin} and a dyadic decomposition. Moreover, we remark that \eqref{e:generalgamma} also holds for $m\in (0,1)$. 

(ii) Although it is not directly of use to us here, we note that, similar to Lemma \ref{l:disp space frec loc}, we have for $m\in(0,1)$, $\gamma<\frac d2$, and $|x|<1$
\[
\sup_{t\in(-1,1)}
\left|
\int_{\mathbb R^d} e^{i(x\cdot\xi+t|\xi|^m)}\frac{\varphi_{\leq N} (|\xi|)}{|\xi|^{\gamma}}\,\d\xi
\right|
\lesssim
|x|^{-(\frac d2+\frac{1}{1-m}(\frac d2-\gamma))}
\]
uniformly in $N\in \mathbb Z$.
\end{remarks}

Now we show \eqref{e:goal maximal} for $m>1$ and $s=\frac d4$. The main difference from the previous arguments to handle the delicacy here is to avoid the use of the decomposition in frequency. In fact, our goal \eqref{e:goal maximal} follows if we prove
\[
\|W\mathcal T_l\overline{W}\|_{\C^{\beta'}}
\lesssim
2^{(\frac{d}{2}-\frac\alpha\beta)l}\|W\|_{L^\infty(\d\mu)L^2}^2
\]
for $\beta\in[1,\frac{2\alpha}{d})$ and uniformly in $N$, where
\[
\mathcal T_lF(t,x)
=
\iint F(t',x')\mathcal K_l(t-t',x-x')\, \d t'\d \mu(x')
\]
and
\[
\mathcal K_l(t,x)
=
\chi_l(|x|)
\int_{\mathbb R^d}e^{i(x\cdot\xi+t|\xi|^m)}\frac{\varphi_{\leq N}(|\xi|)}{|\xi|^{\frac{d}{2}}}\,\d\xi.
\]

Since $\frac{2\alpha}{d}\leq2$, it is enough to consider the $\C^2$ norm and the $\C^\infty$ norm. Lemma \ref{l:disp space} yields the kernel estimate $|\mathcal K_l(t,x)|\lesssim \chi_l(|x|)2^{(d-2s)l}$. Hence, following a similar argument as before, we obtain
\[
\|W\mathcal T_l\overline{W}\|_{\C^2}
\lesssim
2^{(\frac{d}{2}-\frac \alpha2)l}\|W\|_{L^\infty(\d\mu)L^2}^2
\] 
and
\[
\|W\mathcal T_l\overline{W}\|_{\C^\infty}
\lesssim
2^{(\frac{d}{2}-\alpha)l}\|W\|_{L^\infty(\d\mu)L^2}^2,
\] 
and therefore
\[
\|W\mathcal T_l\overline{W}\|_{\C^{\beta'}}
\lesssim
2^{(\frac{d}{2}-\frac \alpha \beta)l}\|W\|_{L^\infty(\d\mu)L^2}^2.
\]
This implies \eqref{e:goal maximal} since $\frac d2<\frac\alpha\beta$.

\subsection{Necessary conditions} \label{subsection:necssary}

Here we establish two necessary conditions for the maximal-in-time estimate \eqref{maximal} to hold, and thus complete the proof of Theorem \ref{t:maximal main}. We begin with the more delicate case of $0 < m < 1$ (in which case we consider $d = 1$).

\subsection*{Condition 1}
Let $d=1$, $0<m<1$, and $0<\alpha\leq 1$. We show $1-2s\leq\frac m2 +(1-m)\frac\alpha\beta$ is necessary for \eqref{maximal} to hold for all orthonormal systems $(f_{j})_{j}$ in $\dot{H}^s(\mathbb R)$. 

Let $N \simeq 1$, which we take to be sufficiently large later in the proof. Define 
\begin{align*}
    I_j
    &=
    [2 N^{-\alpha(1-m)}j, 2N^{-\alpha(1-m)}j+N^{-(1-m)}],
\end{align*}
and $\J=\{j\in\mathbb Z : I_j\cap [-1,1]\not=\emptyset\}$. Then, we set 
\[
    \mu(x)
    =
    c N^{(1-\alpha)(1-m)}\sum_{j\in\J}\chi_{I_j}(x)\d x.
\]
Then we have $\mu(B(x,r)) \lesssim r^\alpha$ for any $x \in [-1,1], r>0$. To see this, first note that it clearly suffices to check the case $r \leq 2$. Now write $M = N^{1-m}$. In the case $2r<M^{-\alpha}$, a ball of radius $r$ contains at most one interval so that 
\[
M^{1-\alpha} \sum_{j \in\J}| B(x,r)\cap I_{j}|
\lesssim
M^{1-\alpha}\min\{r,M^{-1}\}
\lesssim
r^\alpha.
\]
On the other hand, if $\frac12 M^{-\alpha} \leq r \leq 2$, and denoting $r = \ell M^{-\alpha}$ with $\frac12 \leq \ell \leq 2 M^\alpha$, then number of $j \in \J$ such that $B(x,r) \cap I_j \not= \emptyset$ is bounded by $3 \ell$. Therefore, we have
\[
M^{1-\alpha}\sum_{j \in \J} |B(x,r)\cap I_{j}|\lesssim \ell M^{-\alpha} \lesssim r^{\alpha}.
\]
It follows that $\mu\in\mathcal M^\alpha([-1,1])$ if the constant $c>0$ is chosen appropriately. 

We choose the initial data as
\[
    f_j(x)
    =
    c' N^{\frac12-\frac m4}\varrho(N^{1-\frac m2}(x-2N^{-\alpha(1-m)}j)) e^{i(x-2jN^{-\alpha(1-m)})N}.
\]
Here, we choose $\varrho = \widehat{\psi} * \widehat{\psi}$, where $\widehat{\psi}$ is a standard bump function supported on $[-\frac{1}{2},\frac{1}{2}]$. Then $\varrho\in C_0^\infty(\R)$, $\supp\varrho\subset[-1,1]$, $\widehat{\varrho}$ is non-negative, and $\int_{[-1,1]^c} \widehat{\varrho}(\xi) \, \mathrm{d}\xi \leq  (1 - \varepsilon) \int_{[-1,1]} \widehat{\varrho}(\xi) \, \mathrm{d}\xi$ for some $\varepsilon \simeq 1$ sufficiently small. From the disjoint supports, it is easy to check that $(f_j)_j$ is orthonormal in $L^2$ upon an appropriate choice of the constant $c' > 0$. 

For each $j\in \J$ we claim that
\begin{equation} \label{e:necessarylowermain}
    |U_m |D|^{-s}f_j(t_j(x),x)|
    \gtrsim
    N^{\frac12-\frac m4-s}
\end{equation}
whenever $|x-2N^{-\alpha(1-m)}j|\leq \nu N^{-(1-m)}$ and $t_j(x)=-m^{-1}N^{1-m}(x-2N^{-\alpha(1-m)}j)$ (for this particular choice of $t_j(x)$, see also Figure \ref{f:counterxample}). Here, $\nu \simeq 1$ will be chosen sufficiently small later in the argument. We note, in particular, that $|t_j(x)| < 1$ is guaranteed as long as we take $\nu < m$.

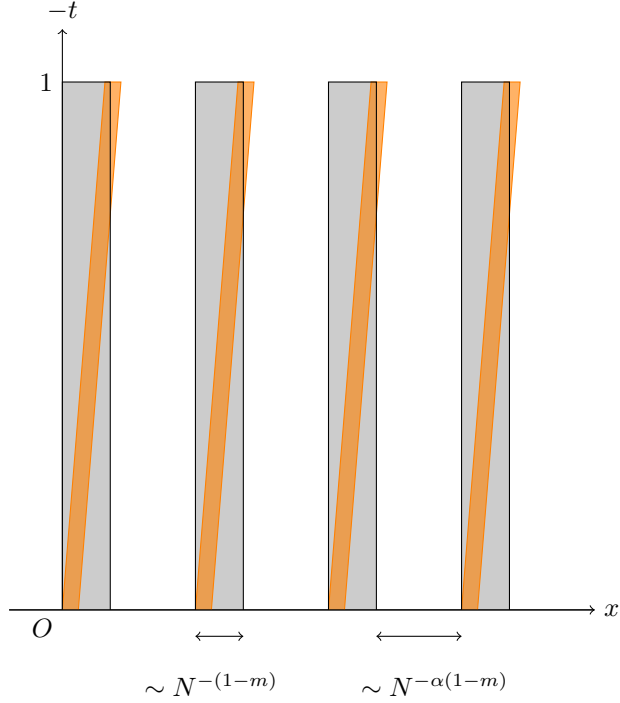
\begin{figure}[t]
\begin{center}
\begin{tikzpicture}[scale=7]
\coordinate (O) at (0,0);

\path[name path=line] (0,0)--(1,0.8);
\path[name path=column] (0.07,0)--(0.07,1);
\path[name intersections={of=line and column, by={A}}];

\draw [->] (-0.1,0)--(1,0);
\draw [->] (0,0)--(0,1.1);

\node [right] at (1,0) {$x$};
\node [above] at (0,1.1) {$-t$};
\node [below left] at (O) {$O$};
\node [left] at (0,1) {$1$};

\foreach \t in {0,1,...,3}{
\fill[gray,opacity=0.4] (0+0.25*\t,0)--(0+0.25*\t,1)--(0.09+0.25*\t,1)--(0.09+0.25*\t,0);
}

\foreach \t in {0,1,...,3}{
\fill[orange, opacity=0.6] (0+0.25*\t,0)--(0+0.25*\t+0.08,1)--(0.03+0.25*\t+0.08,1)--(0.03+0.25*\t,0);
}
\foreach \t in {0,1,...,3}{
\draw [orange](0+0.25*\t,0)--(0+0.25*\t+0.08,1)--(0.03+0.25*\t+0.08,1)--(0.03+0.25*\t,0);
}

\foreach \t in {0,1,...,3}{
\draw (0+0.25*\t,0)--(0+0.25*\t,1)--(0.09+0.25*\t,1)--(0.09+0.25*\t,0);
}

\node [below] at (0+0.25*1+0.03,-0.1) {$\sim N^{-(1-m)}$};

\node [below] at (0+0.25*2+0.2,-0.1) {$\sim N^{-\alpha(1-m)}$};

\draw [<->] (0.09+0.25*2,-0.05)--(0+0.25*3,-0.05);
\draw [<->] (0+0.25*1,-0.05)--(0.09+0.25*1,-0.05);


\draw (-0.1,0)--(1,0);

\end{tikzpicture}
\caption{The construction of initial data for Condition 1. The grey columns consist of their bases $I_j$ with the height $t$. Our choice of $t=t(x)$ belongs to the orange region.}
\label{f:counterxample}
\end{center}

\end{figure}

It suffices to check \eqref{e:necessarylowermain} when $j = 0$ since
\[
    \widehat{f_j}(\xi)
    =
    c' 
    N^{-(\frac12-\frac m4)}e^{-2ij\xi N^{-\alpha(1-m)}}\widehat{\varrho}(N^{-(1-\frac m2)}(\xi-N)). 
\]
Now 
\begin{align*}
    |U_m | D|^{-s}f_0(t,x)|
    &\simeq
    N^{-(\frac12-\frac m4)}\bigg|\int e^{i(x\xi+t|\xi|^m)}|\xi|^{-s}\widehat{\varrho}(N^{-(1-\frac m2)}(\xi-N))\,\d\xi\bigg|\\
    &=
 N^{\frac{1}{2} - \frac{m}{4} - s} \bigg|\int e^{i\theta(\xi)}|1 + N^{-m/2}\xi|^{-s} \widehat{\varrho}(\xi)\,\d\xi\bigg|,
\end{align*}
where $\theta(\xi)=xN^{1-\frac m2}\xi+N^mt|1+N^{-\frac m2}\xi|^m - N^mt$. If $|\xi| \leq 1$ then a Taylor expansion gives
\[
\theta(\xi) = (xN^{1-\frac m2} + mN^\frac{m}{2}t)\xi+ tO(|\xi|^2)
\]
and we observe that by choosing $t = t_0(x)$ the coefficient of $\xi$ vanishes. A careful check of the various constants reveals that $|\theta(\xi)| \leq 4m^{-1}\nu$ whenever $|x|\leq \nu N^{-(1-m)}$, $t = t_0(x)$ and $|\xi| \leq 1$. For such $x$ and $t$ we therefore have
\[
\bigg|\int_{-1}^1 e^{i\theta(\xi)}|1 + N^{-m/2}\xi|^{-s}\widehat{\varrho}(\xi)\,\d\xi \bigg| \geq (1 - \tfrac{\varepsilon}{4}) \int_{-1}^1 \widehat{\varrho}(\xi) \, \mathrm{d}\xi
\]
by taking $N$ sufficiently large, and taking $\nu$ sufficiently small. For the contribution for $|\xi| > 1$, by considering the cases $|\xi| \in [1,\delta N^\frac{m}{2}]$ and $|\xi| > \delta N^\frac{m}{2}$, and using the fast decay of $\widehat{\varrho}$ we have
\begin{align*}
\bigg|\int_{|\xi| > 1} |1 + N^{-m/2}\xi|^{-s}\widehat{\varrho}(\xi)\,\d\xi \bigg| & \leq \frac{1}{(1-\delta)^s} \int_{|\xi| > 1} \widehat{\varrho}(\xi) \, \mathrm{d}\xi + \frac{C}{\delta^2 N^{\frac{m}{2}}} \\
& \leq \frac{1-\varepsilon}{(1-\delta)^s} \int_{-1}^1 \widehat{\varrho}(\xi) \, \mathrm{d}\xi + \frac{C}{\delta^2 N^{\frac{m}{2}}}
\end{align*}
for some $C \simeq 1$. Taking $\delta$ sufficiently small, and then taking $N$ sufficiently large, we have
\[
\bigg|\int e^{i(x\xi+t|\xi|^m)}|\xi|^{-s}\widehat{\varrho}(N^{-(1-\frac m2)}(\xi-N))\,\d\xi\bigg| \geq  \frac{\varepsilon}{4} \int_{-1}^1 \widehat{\varrho}(\xi) \, \mathrm{d}\xi - \frac{C}{\delta^2 N^{\frac{m}{2}}} \geq   \frac{\varepsilon}{8} \int_{-1}^1 \widehat{\varrho}(\xi) \, \mathrm{d}\xi 
\]
which gives \eqref{e:necessarylowermain} when $j = 0$.

From \eqref{e:necessarylowermain} we obtain
\begin{align*}
    \bigg\|\sum_{j\in \J} |U_m|D|^{-s}f_j|^2\bigg\|_{L^1(\d\mu)L^\infty}
    &\geq
    \sum_{j\in\J} \int_{I_j} \sup_{t\in[-1,1]}|U_m|D|^{-s}f_j(t,x)|^2\,\d x\\
    &\gtrsim
    N^{1-\frac m2-2s}.
\end{align*}
If we assume that \eqref{maximal} is true, then the above yields $N^{1-\frac m2-2s} \lesssim (\#\J)^\frac1\beta \lesssim N^{(1-m)\frac\alpha\beta}$, and letting $N \to \infty$ we deduce that $1-2s\leq\frac m2 +(1-m)\frac\alpha\beta$ as desired. \qed

\subsection*{Condition 2} Let $d\in \mathbb N$,  $m\in(0,\infty)\backslash\{1\}$ and $0<\alpha\leq d$. We show $d-2s\leq \frac\alpha\beta$ is necessary for \eqref{maximal} to hold for all orthonormal systems $(f_j)_j$ in $\dot{H}^s(\mathbb R^d)$. 
Let $N \geq1$, and define
\[
    I_j
    =
    [2N^{-\frac{\alpha}{d}}j, 2N^{-\frac{\alpha}{d}}j+100^{-1}N^{-1}]\subset \mathbb R
\]
for $j \in \Z$ and $\J=\{(j_1,\dots,j_d)\in\mathbb Z^d: I_{j_1}\times\cdots\times I_{j_d}\cap\mathbb B^d\not=\emptyset\}$. Then, we set 
\[
    \mu(x)
    =
    cN^{d-\alpha}\sum_{\J}\chi_{I_{j_1}\times\cdots\times I_{j_d}}(x)\d x.
\]
By a similar argument as we used for Condition 1, one can easily verify that $\mu(B(x,r)) \lesssim r^\alpha$ for any $x \in \mathbb{B}^d, r>0$. Hence, with an appropriate constant $c>0$, we have $\mu\in\mathcal M^\alpha(\mathbb B^d)$.

For ${\bf{j}} \in \Z^d$, we define the initial data
\[
    f_{\bf{j}}(x)
    =
    c' N^\frac d2\varrho(N(x-2N^{-\frac{\alpha}{d}}{\bf{j}})).
\]
Here, similar to Condition 1, we choose $\varrho\in C_0^\infty (\R^d)$ such that $\supp\varrho\subset \mathbb B^d$, $\widehat{\varrho}$ is non-negative, and $\int_{|\xi|\geq 1}\widehat{\varrho}(\xi)\,\d\xi \leq (1 - \varepsilon) \int_{\mathbb B^d}\widehat{\varrho}(\xi)\,\d\xi$ for some $\varepsilon \simeq 1$. It is simple to check that $(f_{\bf{j}})_{\bf{j}}$ is orthonormal in $L^2$ if we make an appropriate choice of the constant $c'>0$. Also, we claim that, for each ${\bf{j}}\in\J$, we have
\[
    |U_m | D|^{-s}f_{\bf{j}}(t,x)|
    \gtrsim
    N^{\frac d2-s}
\]
whenever $|x-2N^{-\frac{\alpha}{d}}{\bf{j}}|\leq \nu N^{-1}$ and $|t| \leq \nu N^{-m}$ ($\nu \simeq 1$ to be chosen sufficiently small). To see this, since
\[
    \widehat{f_{\bf{j}}}(\xi)
    =
    c' N^{-\frac d2}e^{-2iN^{-\frac{\alpha}{d}}{\bf{j}}\cdot\xi}\widehat{\varrho}(N^{-1}\xi),
\]
it suffices to check the case ${\bf{j}}=0$. Now
\begin{align*}
    |U_m | D|^{-s} f_0(t,x)|
    \simeq 
    N^{\frac d2 -s}\bigg|\int e^{i\theta(\xi)}|\xi|^{-s} \widehat{\varrho}(\xi)\,\d\xi\bigg|,
\end{align*}
where the phase is denoted by $\theta(\xi)=Nx\cdot \xi+N^mt|\xi|^m$. If $|x|\leq \nu N^{-1}$ and $|t|\leq \nu N^{-m}$, then the phase is sufficiently small so that 
\begin{align*}
\bigg|\int_{\mathbb{B}^d} e^{i\theta(\xi)}|\xi|^{-s} \widehat{\varrho}(\xi)\,\d\xi\bigg|
&\geq
(1 - \tfrac{\varepsilon}{2}) \int_{\mathbb{B}^d} \widehat{\varrho}(\xi)\,\d\xi.
\end{align*}
The contribution for $|\xi| \geq 1$ can be easily estimated from above using the properties of $\widehat{\varrho}$, and the claim follows.

From the above we conclude that
\begin{align*}
    \bigg\|\sum_{{\bf{j}} \in \J}|U_m |D|^{-s}f_{\bf{j}}|^2\bigg\|_{L^1(\d\mu)L^\infty}
    &\geq
    \int_{\mathbb{B}^d} \sup_{t\in [-1,1]}\sum_{{\bf{j}}\in \J}|U_m |D|^{-s}f_{\bf{j}}(t,x)|^2\,\d\mu(x)\\
    &\gtrsim
    N^{d-2s}.
\end{align*}
This means that if \eqref{maximal} is true, then  we obtain $N^{d-2s}\lesssim (\#\J)^\frac1\beta \lesssim N^\frac\alpha\beta$, and letting $N \to \infty$ we deduce that $d-2s \leq \frac{\alpha}{\beta}$ as desired. \qed

\begin{Acknowledgements}
The first author would like to express his thanks to Shohei Nakamura and Sanghyuk Lee for many inspiring conversations related to this content of this paper. Part of this work was carried out whilst the authors were participating in the MATRIX--RIMS Tandem Workshop on Geometric Analysis in Harmonic Analysis and PDE at RIMS during 27--31 March 2023, and the authors are grateful for the stimulating working environment.
\end{Acknowledgements}


\end{document}